\newcommand{\addresseshere}{%
  \enddoc@text\let\enddoc@text\relax
}
\tikzset{cross/.style={cross out, draw=black, line width=.5mm, rotate = 30},
%default radius will be 1pt. 
cross/.default={1pt}}
\newcounter{x}
\newcounter{y}
\newcounter{z}
\newcommand\xaxis{210}
\newcommand\yaxis{-30}
\newcommand\zaxis{90}
\newcommand\topside[3]{
  \fill[fill=yellow, draw=black,shift={(\xaxis:#1)},shift={(\yaxis:#2)},
  shift={(\zaxis:#3)}] (0,0) -- (30:1) -- (0,1) --(150:1)--(0,0);
}
\newcommand\leftside[3]{
  \fill[fill=red, draw=black,shift={(\xaxis:#1)},shift={(\yaxis:#2)},
  shift={(\zaxis:#3)}] (0,0) -- (0,-1) -- (210:1) --(150:1)--(0,0);
}
\newcommand\rightside[3]{
  \fill[fill=blue, draw=black,shift={(\xaxis:#1)},shift={(\yaxis:#2)},
  shift={(\zaxis:#3)}] (0,0) -- (30:1) -- (-30:1) --(0,-1)--(0,0);
}
\newcommand\cube[3]{
  \topside{#1}{#2}{#3} \leftside{#1}{#2}{#3} \rightside{#1}{#2}{#3}
}
\newcommand\planepartition[1]{
 \setcounter{x}{-1}
  \foreach \a in {#1} {
    \addtocounter{x}{1}
    \setcounter{y}{-1}
    \foreach \b in \a {
      \addtocounter{y}{1}
      \setcounter{z}{-1}
      \foreach \c in {0,...,\b} {
        \addtocounter{z}{1}
      \ifthenelse{\c=0}{\setcounter{z}{-1},\addtocounter{y}{0}}{
        \cube{\value{x}}{\value{y}}{\value{z}}}
      }
    }
  }
}
\newtheorem{theorem}{Theorem}[section]
\newtheorem{prop}[theorem]{Proposition}
\newtheorem{lemma}[theorem]{Lemma}
\newtheorem{corollary}[theorem]{Corollary}
\theoremstyle{definition}
\newtheorem{remark}[theorem]{Remark}
\newtheorem{definition}[theorem]{Definition}
\newtheorem{example}[theorem]{Example}
\newtheorem{question}[theorem]{Question}
\newcommand{\Thex}{$\textrm{T}_{\textrm{hex}}$}
\newcommand{\ov}[1]{\overline{{#1}}}
\newcommand{\todo}[1]{\par \noindent
  \framebox{\begin{minipage}[c]{0.95 \textwidth} TO DO:
      #1 \end{minipage}}\par}
\newcommand\commentout[1]{}
\definecolor{green}{RGB}{34, 139, 34}
\newcommand{\jnote}[1]{\par \noindent
  \framebox{\begin{minipage}[c]{0.95 \textwidth}\color{green} JULIE'S NOTE:
      #1 \color{black}\end{minipage}}\par}
\definecolor{purple}{RGB}{128, 0, 128}
\newcommand{\hnote}[1]{\par \noindent
  \framebox{\begin{minipage}[c]{0.95 \textwidth}\color{purple} HELEN'S NOTE:
      #1 \color{black}\end{minipage}}\par}
\definecolor{orangered}{RGB}{255,69,0}
\newcommand\purplesout{\bgroup\markoverwith{\textcolor{purple}{\rule[0.5ex]{2pt}{0.4pt}}}\ULon}
\title{Matching complexes of trees and applications of the matching tree algorithm}
\begin{document}

\author{Marija Jeli\'c Milutinovi\'c}
\address{Faculty of Mathematics\\
         University of Belgrade\\
         Belgrade, Serbia}
\email{marijaj@matf.bg.ac.rs}

\author{Helen Jenne}
\address{CNRS and Institut Denis Poisson\\
         Universit\'e de Tours and Universit\'e d'Orl\'eans\\
         Tours, France}
\email{helen.jenne@univ-tours.fr}

\author{Alex McDonough}
\address{Department of Mathematics\\
         Brown University\\
         Providence, RI 02912--9032}
\email{amcd@math.brown.edu}

\author{Julianne Vega}
\address{Department of Mathematics\\
         Kennesaw State University\\
         Marietta, GA 30060}
\email{jvega30@kennesaw.edu}

\begin{abstract}
A matching complex of a simple graph $G$ is a simplicial complex with faces given by the matchings of $G$. The topology of matching complexes is mysterious; there are few graphs for which the homotopy type is known. Marietti and Testa showed that matching complexes of forests are contractible or homotopy equivalent to a wedge of spheres. We study two specific families of trees. For caterpillar graphs, we give explicit formulas for the number of spheres in each dimension and for perfect binary trees we find a strict connectivity bound. We also use a tool from discrete Morse theory called the \textit{Matching Tree Algorithm} to study the connectivity of honeycomb graphs, partially answering a question raised by Jonsson.\\\\
Keywords: \emph{matching complex, matching tree algorithm, honeycomb graph,
caterpillar graph, homotopy type.}\\\\
MSC Classification: \emph{05C70, 05E45, 05C69}.
\end{abstract}

\maketitle
\pagestyle{plain}

\section{Introduction}

Let $G$ be a simple graph without isolated vertices with vertex set $V(G)$ and edge set $E(G)$. 
%\jnote{Remove definition} 
%An \emph{(abstract) simplicial complex} $\Delta$ is a collection of subsets of a finite set $X$ such that:
%\begin{itemize}[nosep]
%\item[(i)]$\emptyset \in \Delta$, and
%\item[(ii)] if $\sigma \in \Delta$ and $\tau \subseteq \sigma$, then $\tau \in \Delta$. 
%\end{itemize} 
% The \emph{dimension of a complex}, dim, is the maximum of the dimensions of its simplices. 
The \emph{matching complex} of a graph $G$, denoted $M(G)$, is a simplicial complex with vertices given by the edges of $G$ and faces given by matchings contained in $G$, where a \emph{matching} is a collection of pairwise disjoint edges of $G$.  We denote an edge in $G$ as $\overline{e} \in E(G)$ and the corresponding vertex in the matching complex of $G$ as $e \in M(G)$; see Figure~\ref{fig:matchingcomp}. 
The study of matching complexes frequently makes use of the fact that
the matching complex of a graph $G$ is the same as the {\em independence  complex} of the {\em line graph} of $G$. The {\em independence complex} of a graph $G$, $\textrm{Ind}(G)$, is the simplicial complex with vertex set $V(G)$ and faces given by sets of pairwise non-adjacent vertices in $G$. The {\em line graph} $L(G)$ is the graph with vertex set $E(G)$ with two vertices in $L(G)$ adjacent if and only if the corresponding edges are incident in $G$, where two edges in a graph are \emph{incident} if they share a common vertex.

We study matching complexes of polygonal line tilings and certain families of trees; namely, \em{caterpillar graphs} and \em{perfect binary trees}. We will detail our contributions in Section~\ref{sec:contributions}, but first we put our results in context by giving a brief overview of the matching complex literature.
%\hnote{Can someone check my semicolon use in the previous paragraph and correct if necessary?}
%\anote{It sounds kind of awkward to me. What about "We will detail our contributions in Section~\ref{sec:contributions}, but first we put our results in context by giving a brief overview of the matching complexes literature."}
%\jnote{I like Alex's rewrite.} 

%%%%%%%%%%%%%%%%%%%%%%%%%%%%%%%%%%%%%%%

\begin{figure} 
\definecolor{rvwvcq}{rgb}{0.08235294117647059,0.396078431372549,0.7529411764705882}
\definecolor{wrwrwr}{rgb}{0.3803921568627451,0.3803921568627451,0.3803921568627451}
\begin{tikzpicture}[scale = 0.5, line cap=round,line join=round,x=1cm,y=1cm]
\clip(-10.361965289256208,-1.487145454545453) rectangle (7.838034710743815,6.23285454545454);
\fill[line width=2pt,color=rvwvcq,fill=rvwvcq,fill opacity=0.10000000149011612] (6,3) -- (2.8796347107438045,5.007654545454543) -- (0,3) -- cycle;
\fill[line width=2pt,color=rvwvcq,fill=rvwvcq,fill opacity=0.10000000149011612] (0,3) -- (2.0084347107438036,1.4260545454545446) -- (0,0) -- cycle;
\fill[line width=2pt,color=rvwvcq,fill=rvwvcq,fill opacity=0.10000000149011612] (6,3) -- (3.9686347107438054,1.4502545454545446) -- (6,0) -- cycle;
\draw [line width=2pt] (0,0)-- (0,3);
\draw [line width=2pt] (6,3)-- (0,3);
\draw [line width=2pt] (6,3)-- (2.8796347107438045,5.007654545454543);
\draw [line width=2pt] (0,3)-- (2.8796347107438045,5.007654545454543);
\draw [line width=2pt] (6,3)-- (6,0);
\draw [line width=2pt] (0,0)-- (6,0);
\draw [line width=2pt] (3.9686347107438054,1.4502545454545446)-- (6,3);
\draw [line width=2pt] (3.9686347107438054,1.4502545454545446)-- (6,0);
\draw [line width=2pt] (3.9686347107438054,1.4502545454545446)-- (2.0084347107438036,1.4260545454545446);
\draw [line width=2pt] (2.0084347107438036,1.4260545454545446)-- (0,3);
\draw [line width=2pt] (2.0084347107438036,1.4260545454545446)-- (0,0);
\draw [line width=2pt,color=rvwvcq] (6,3)-- (2.8796347107438045,5.007654545454543);
\draw [line width=2pt,color=rvwvcq] (2.8796347107438045,5.007654545454543)-- (0,3);
\draw [line width=2pt,color=rvwvcq] (0,3)-- (6,3);
\draw [line width=2pt,color=rvwvcq] (0,3)-- (2.0084347107438036,1.4260545454545446);
\draw [line width=2pt,color=rvwvcq] (2.0084347107438036,1.4260545454545446)-- (0,0);
\draw [line width=2pt,color=rvwvcq] (0,0)-- (0,3);
\draw [line width=2pt,color=rvwvcq] (6,3)-- (3.9686347107438054,1.4502545454545446);
\draw [line width=2pt,color=rvwvcq] (3.9686347107438054,1.4502545454545446)-- (6,0);
\draw [line width=2pt,color=rvwvcq] (6,0)-- (6,3);
\draw (2.4,6) node[anchor=north west] {$g$};
\draw (6,3.492854545454543) node[anchor=north west] {$f$};
\draw (6,0.4728545454545449) node[anchor=north west] {$d$};
\draw (3.5,1.5) node[anchor=north west] {$b$};
\draw (1.65,1.4) node[anchor=north west] {$e$};
\draw (-0.9,0.4728545454545449) node[anchor=north west] {$a$};
\draw (-0.9,3.492854545454543) node[anchor=north west] {$c$};
\draw [line width=2pt] (-9.681965289256203,2.9128545454545436)-- (-9.701965289256206,0.3528545454545422);
\draw [line width=2pt] (-9.701965289256206,0.3528545454545422)-- (-6.801965289256203,0.3528545454545422);
\draw [line width=2pt] (-6.801965289256203,0.3528545454545422)-- (-6.761965289256203,2.872854545454541);
\draw [line width=2pt] (-6.761965289256203,2.872854545454541)-- (-9.681965289256203,2.9128545454545436);
\draw [line width=2pt] (-6.761965289256203,2.872854545454541)-- (-3.6219652892562006,2.8528545454545395);
\draw [line width=2pt] (-3.6219652892562006,2.8528545454545395)-- (-3.6619652892562016,0.3528545454545422);
\draw [line width=2pt] (-3.6619652892562016,0.3528545454545422)-- (-6.801965289256203,0.3528545454545422);

\draw [fill=wrwrwr] (0,0) circle (2.5pt);
\draw [fill=rvwvcq] (0,3) circle (2.5pt);
\draw [fill=rvwvcq] (6,3) circle (2.5pt);
\draw [fill=rvwvcq] (2.8796347107438045,5.007654545454543) circle (2.5pt);
\draw [fill=rvwvcq] (6,0) circle (2.5pt);
\draw [fill=rvwvcq] (3.9686347107438054,1.4502545454545446) circle (2.5pt);
\draw [fill=rvwvcq] (2.0084347107438036,1.4260545454545446) circle (2.5pt);
\draw [fill=rvwvcq] (-9.681965289256203,2.9128545454545436) circle (2.5pt);
%\draw (-9.5,3.3) node {$d$};
\draw [fill=rvwvcq] (-9.701965289256206,0.3528545454545422) circle (2.5pt);
%\draw (-9.5,0) node {$a$};
\draw [fill=rvwvcq] (-6.801965289256203,0.3528545454545422) circle (2.5pt);
%\draw (-6.6419652892562,0) node {$b$};
\draw [fill=rvwvcq] (-6.761965289256203,2.872854545454541) circle (2.5pt);
%\draw (-6.6019652892562,3.3) node {$e$};
\draw [fill=rvwvcq] (-3.6219652892562006,2.8528545454545395) circle (2.5pt);
%\draw (-3.461965289256197,3.3) node {$f$};
\draw [fill=rvwvcq] (-3.6619652892562016,0.3528545454545422) circle (2.5pt);
%\draw (-3.501965289256197,0) node {$c$};

\draw (-8,3.4) node {$\overline{e}$};
\draw (-8,-.1) node {$\overline{a}$};
\draw (-5.3,-.1) node {$\overline{b}$};
\draw (-5.3,3.4) node {$\overline{d}$};
\draw (-10.2,1.6) node {$\overline{f}$};
\draw (-6.3,1.6) node {$\overline{g}$};
\draw (-3.2,1.6) node {$\overline{c}$};

\end{tikzpicture}
\caption{ On the left we have a $2 \times 3$ grid graph, $G$. On the right is the corresponding matching complex $M(G)$. The vertices of $M(G)$ are given by the edges of $G$. There are two maximal 1-simplices (i.e. $(e,b)$ and $(a,d)$) and three shaded maximal 2-simplices.} 
\label{fig:matchingcomp}
\end{figure}
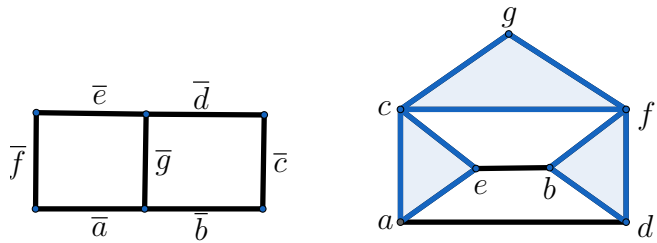

%%%%%%%%%%%%%%%%%%

Matching complexes have a robust history and have been studied using both combinatorial and topological methods. 
 In 1992, Bouc \cite{Bouc} introduced the matching complex of the complete graph $K_n$
in relation to the Brown complex and the Quillen complex~\cite{Brown1, Brown2, Quillen}.
%In this paper, Bouc determined how the representation of the symmetric group on the homology of $M(K_n)$ decomposes into irreducible representations. An immediate consequence of this theorem is a combinatorial formula for the Betti numbers of $M(K_n)$.
%In 1992, Bouc \cite{Bouc} introduced the matching complex of the complete graph $K_n$in relation to the Brown complex, $\Delta(S_p(G))$ and the Quillen complex, $\Delta(A_p(G))$, first studied in \cite{Brown1, Brown2, Quillen}. Here, $\Delta(P)$ is the order complex of a poset $P$, $S_p(G)$ is the poset of non-trivial $p$-subgroups of a finite group $G$, and $A_p(G)$ is a subposet of $S_p(G)$ consisting of all nontrivial elementary abelian $p$-subgroups of $G$. 
%In this paper, Bouc determined how the representation of the symmetric group on the homology of $M(K_n)$ decomposes into irreducible representations. An immediate consequence of this theorem is a combinatorial formula for the Betti numbers of $M(K_n)$.
Bouc's results were the start of an extensive study of the full matching complex, $M(K_n)$, which has interesting applications (see \cite{Michelle}). 
%\jnote{Suggested: Remove example.}
%For example, an application to the representation theory of the symmetric group $S_n$ includes giving a representation theoretic interpretation of one of Littlewood's symmetric function identities.
%\jnote{On the fence about keeping the next sentence since it is true but not really relevant here.} 
 
Most of the results about the matching complex $M(K_n)$ have analogues for the 
{\em chessboard complex}, $M(K_{m,n})$. In the last several decades, the Betti numbers, homotopy type, and connectivity of $M(K_n)$ and $M(K_{m,n})$ have been well-studied (see, for instance, the results in \cite{Bouc, Friedman, Ziegler}).
 Studying the connectivity of the chessboard complex \cite{Bjorner_etal, Shareshian_Wachs} was originally motivated by problems in computational geometry \cite{ZV}.

Few other families of graphs have been as well-studied as
$K_n$ and $K_{m, n}$.  In \cite{Kozlov_Trees}, Kozlov determined the homotopy types of the matching complexes of paths and cycles.
In \cite{Braun_Hough}, Braun and Hough used discrete Morse matchings to find bounds on the location of non-trivial homology groups for the $2 \times n$ grid graph. Matsushita built on their results, using purely topological methods to show that the matching complex of a $2 \times n$ grid graph is homotopy equivalent to a wedge of spheres~\cite{Matsushita}.  In 2008, Marietti and Testa proved that the matching complex of a forest is either contractible or homotopy equivalent to a wedge of spheres~\cite[Theorem 4.13]{Marietti_Testa_forests}. 
%\jnote{DELETE: \cite{Kozlov_Trees}, Kozlov used vertex decomposability to show that the matching complex of a path $P_n$ on $n$ vertices is either homotopy equivalent to a point if $n \equiv 2$ (mod 3) or a sphere of dimension $\lceil\frac{n-4}{3} \rceil$ otherwise \cite[Proposition 4.6]{Kozlov_Trees}. Using similar methods, Kozlov showed that the homotopy type of the matching complex of a cycle $C_n$ is either a sphere of dimension $\lceil\frac{n-4}{3} \rceil$ if $n \not\equiv 0$ (mod 3) or a wedge of two spheres of dimension $\lceil\frac{n-4}{3} \rceil$ otherwise \cite[Proposition 5.2]{Kozlov_Trees}.}

In \cite{Jakob}, Jonsson proposes studying the topological properties of the matching complexes of honeycomb graphs. A honeycomb graph, also called a hexagonal grid graph, is a subgraph of a unit side length hexagonal tiling of the plane.
%has a vertex set that is a subset of the grid points of a unit side length hexagonal tiling of the plane, with edges connecting vertices that are one unit apart.} 
An $r \times s \times t$ \emph{honeycomb graph} has $r$ hexagons along its lower left side, $s$ hexagons along its upper left side, and $t$ hexagons along the top (see Figure~\ref{fig:honeycomb}).

Historically, perfect matchings of the honeycomb graph have been studied because of their connections to chemistry (where they are known as Kekul\`e structures \cite{klein_etal, hite_etal}) and to plane partitions \cite{Stanley, Kuperberg}. 
A \emph{perfect matching} of a graph is a matching in which every vertex is incident to exactly one edge of the matching.

%\hnote{delete: In a honeycomb graph, a perfect matching corresponds to a rhombus tiling of a hexagonal region of triangles, which is equivalent to a plane partition. }
%A \emph{plane partition} is a two dimensional array of integers that are non-increasing moving from left to right and from top to bottom. Plane partitions can be visualized as a pile of unit cubes in the positive octant of $\mathbb{R}^{3}$ following the non-increasing conditions (Figure~\ref{fig:honeycomb2}).

\vspace{-1.5cm}
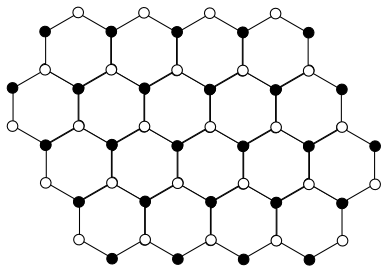
\begin{figure}[ht!]

    \begin{turn}{150}
    \begin{tikzpicture} [ hexa/.style= {shape=regular polygon,
                                   regular polygon sides=6,
                                   minimum size=1cm, draw,
                                   inner sep=0,anchor=south,
                                   fill=white}]

\node[hexa] (hex1) at (0, 0) {};
\foreach \x in {1, 3, 5}
  \fill[color = white, draw = black] (hex1.corner \x) circle[radius=2pt];
  \foreach \x in {2, 4, 6}
  \fill[color = black, draw = black] (hex1.corner \x) circle[radius=2pt];

\node[hexa] (hex2) at (0, {-sin(60)} ) {};
\foreach \x in {1, 3, 5}
  \fill[color = white, draw = black] (hex2.corner \x) circle[radius=2pt];
  \foreach \x in {2, 4, 6}
  \fill[color = black, draw = black] (hex2.corner \x) circle[radius=2pt];

\node[hexa] (hex3) at (0, {-2*sin(60)} ) {};
\foreach \x in {1, 3, 5}
  \fill[color = white, draw = black] (hex3.corner \x) circle[radius=2pt];
  \foreach \x in {2, 4, 6}
  \fill[color = black, draw = black] (hex3.corner \x) circle[radius=2pt];

\node[hexa] (hex4) at (-.76, {-sin(60)/2} ) {};
\foreach \x in {1, 3, 5}
  \fill[color = white, draw = black] (hex4.corner \x) circle[radius=2pt];
  \foreach \x in {2, 4, 6}
  \fill[color = black, draw = black] (hex4.corner \x) circle[radius=2pt];
\node[hexa] (hex5) at (-.76, { -sin(60) -sin(60)/2} ) {};
\foreach \x in {1, 3, 5}
  \fill[color = white, draw = black] (hex5.corner \x) circle[radius=2pt];
  \foreach \x in {2, 4, 6}
  \fill[color = black, draw = black] (hex5.corner \x) circle[radius=2pt];

\node[hexa] (hex6) at (.76, {-sin(60)/2}) {};
\foreach \x in {1, 3, 5}
  \fill[color = white, draw = black] (hex6.corner \x) circle[radius=2pt];
  \foreach \x in {2, 4, 6}
  \fill[color = black, draw = black] (hex6.corner \x) circle[radius=2pt];

\node[hexa] (hex7) at (.76, { -sin(60) -sin(60)/2} ) {};
\foreach \x in {1, 3, 5}
  \fill[color = white, draw = black] (hex7.corner \x) circle[radius=2pt];
  \foreach \x in {2, 4, 6}
  \fill[color = black, draw = black] (hex7.corner \x) circle[radius=2pt];
  
  \node[hexa] (hex8) at (.76, {-sin(60)/2 + sin(60)}) {};
\foreach \x in {1, 3, 5}
  \fill[color = white, draw = black] (hex8.corner \x) circle[radius=2pt];
  \foreach \x in {2, 4, 6}
  \fill[color = black, draw = black] (hex8.corner \x) circle[radius=2pt];

    \node[hexa] (hex9) at (.76, {-sin(60)/2 -2*sin(60)}) {};
\foreach \x in {1, 3, 5}
  \fill[color = white, draw = black] (hex9.corner \x) circle[radius=2pt];
  \foreach \x in {2, 4, 6}
  \fill[color = black, draw = black] (hex9.corner \x) circle[radius=2pt];
  
    \node[hexa] (hex10) at (2*.76, {-sin(60) + sin(60)}) {};
\foreach \x in {1, 3, 5}
  \fill[color = white, draw = black] (hex10.corner \x) circle[radius=2pt];
  \foreach \x in {2, 4, 6}
  \fill[color = black, draw = black] (hex10.corner \x) circle[radius=2pt];
  
    \node[hexa] (hex11) at (2*.76, {-sin(60) + 2*sin(60)}) {};
\foreach \x in {1, 3, 5}
  \fill[color = white, draw = black] (hex11.corner \x) circle[radius=2pt];
  \foreach \x in {2, 4, 6}
  \fill[color = black, draw = black] (hex11.corner \x) circle[radius=2pt];

      \node[hexa] (hex12) at (2*.76, {-sin(60)}) {};
\foreach \x in {1, 3, 5}
  \fill[color = white, draw = black] (hex12.corner \x) circle[radius=2pt];
  \foreach \x in {2, 4, 6}
  \fill[color = black, draw = black] (hex12.corner \x) circle[radius=2pt];
  
      \node[hexa] (hex13) at (2*.76, {-2*sin(60)}) {};
\foreach \x in {1, 3, 5}
  \fill[color = white, draw = black] (hex13.corner \x) circle[radius=2pt];
  \foreach \x in {2, 4, 6}
  \fill[color = black, draw = black] (hex13.corner \x) circle[radius=2pt];

      \node[hexa] (hex14) at (3*.76, {-2*sin(60) + sin(60)/2}) {};
\foreach \x in {1, 3, 5}
  \fill[color = white, draw = black] (hex14.corner \x) circle[radius=2pt];
  \foreach \x in {2, 4, 6}
  \fill[color = black, draw = black] (hex14.corner \x) circle[radius=2pt];
  
      \node[hexa] (hex15) at (3*.76, {-sin(60) + sin(60)/2}) {};
\foreach \x in {1, 3, 5}
  \fill[color = white, draw = black] (hex15.corner \x) circle[radius=2pt];
  \foreach \x in {2, 4, 6}
  \fill[color = black, draw = black] (hex15.corner \x) circle[radius=2pt];

        \node[hexa] (hex16) at (3*.76, {sin(60)/2}) {};
\foreach \x in {1, 3, 5}
  \fill[color = white, draw = black] (hex16.corner \x) circle[radius=2pt];
  \foreach \x in {2, 4, 6}
  \fill[color = black, draw = black] (hex16.corner \x) circle[radius=2pt];

          \node[hexa] (hex18) at (4*.76, { 0}) {};
\foreach \x in {1, 3, 5}
  \fill[color = white, draw = black] (hex18.corner \x) circle[radius=2pt];
  \foreach \x in {2, 4, 6}
  \fill[color = black, draw = black] (hex18.corner \x) circle[radius=2pt];

          \node[hexa] (hex19) at (4*.76, { -sin(60)}) {};
\foreach \x in {1, 3, 5}
  \fill[color = white, draw = black] (hex19.corner \x) circle[radius=2pt];
  \foreach \x in {2, 4, 6}
  \fill[color = black, draw = black] (hex19.corner \x) circle[radius=2pt];

  %the following dot is just to get the picture at the right height
 % \filldraw[color=white,draw = white] (0,-2.25) circle (0.05cm); 
\end{tikzpicture} 
  \end{turn} \vspace{-1cm}%\hspace{1.5cm}
   % \begin{turn}{270}
   % \begin{tikzpicture}[scale=.8]
 %\vspace{5cm}
%%%VERTICAL LINES
 % \draw (0,0) -- (0, 2);
 % \draw (.866, -.5) -- (.866 ,-.5 +3) ;
 % \draw (2*.866, -2*.5) -- (2*.866 ,-2*.5 +4) ;
 %   \draw (3*.866, -3*.5) -- (3*.866 ,-3*.5 +5) ;
  %      \draw (4*.866, -2*.5) -- (4*.866 ,-2*.5 +5) ;
  %          \draw (5*.866, -1*.5) -- (5*.866 ,-1*.5 +4) ;
   %               \draw (6*.866, 0) -- (6*.866 , 0 +3) ;
   %                   \draw (7*.866, .5) -- (7*.866 , 2.5) ;

   %Lines going diagonally down    
%  \draw (0,0) -- (3*.866, -3*.5);
 %   \draw (0,1) -- (4*.866, -2*.5) ;
  %    \draw (0,2) --(5*.866, -1*.5);
  %  \draw (.866 ,-.5 +3) -- (6*.866, 0);
  %   \draw (2*.866, -2*.5+4) -- (7*.866, .5);
  %   \draw (3*.866 ,-3*.5 +5) -- (7*.866, 1.5);
  %  \draw (4*.866 ,-2*.5 +5) -- (7*.866, 2.5);
  
  %Lines going diagonally up
%  \draw (0, 2) --  (4*.866 ,-2*.5 +5) ;
 %  \draw (0, 1) --  (5*.866 ,-1*.5 +4) ;
 %   \draw (0,0) -- (6*.866 , 0 +3) ;
 %     \draw (.866, -.5) -- (7*.866 , 2.5) ;
 %      \draw (2*.866, -2*.5) --( 7*.866, 1.5);
  %      \draw (3*.866, -3*.5) -- (7*.866, .5) ;
%\end{tikzpicture}
%\end{turn}

\caption{A $3 \times 2 \times 4$ honeycomb graph.}

%\caption{A $4 \times 3 \times 2$ honeycomb graph (left), which is the dual graph to the hexagonal region of triangles shown (right).}
\label{fig:honeycomb}
\end{figure}

%%%%%%%%%%%%%%%%%%%%%%%%%%%%%%%%%%%
\iffalse
  \begin{figure}[ht!]
    \centering
    \begin{tikzpicture}[scale=.9]
  %the following dot is just to get the picture at the right height
  %\filldraw[color=black,draw = white] (3.5,5) circle (0.05cm); 
 \vspace{5cm}
%%%VERTICAL LINES
  \draw (0,0) -- (0, 2);
  \draw (.866, -.5) -- (.866 ,-.5 +3) ;
  \draw (2*.866, -2*.5) -- (2*.866 ,-2*.5 +4) ;
    \draw (3*.866, -3*.5) -- (3*.866 ,-3*.5 +5) ;
        \draw (4*.866, -2*.5) -- (4*.866 ,-2*.5 +5) ;
            \draw (5*.866, -1*.5) -- (5*.866 ,-1*.5 +4) ;
                  \draw (6*.866, 0) -- (6*.866 , 0 +3) ;
                      \draw (7*.866, .5) -- (7*.866 , 2.5) ;

   %Lines going diagonally down    
  \draw (0,0) -- (3*.866, -3*.5);
    \draw (0,1) -- (4*.866, -2*.5) ;
      \draw (0,2) --(5*.866, -1*.5);
    \draw (.866 ,-.5 +3) -- (6*.866, 0);
     \draw (2*.866, -2*.5+4) -- (7*.866, .5);
     \draw (3*.866 ,-3*.5 +5) -- (7*.866, 1.5);
    \draw (4*.866 ,-2*.5 +5) -- (7*.866, 2.5);
  
  %Lines going diagonally up
  \draw (0, 2) --  (4*.866 ,-2*.5 +5) ;
   \draw (0, 1) --  (5*.866 ,-1*.5 +4) ;
    \draw (0,0) -- (6*.866 , 0 +3) ;
      \draw (.866, -.5) -- (7*.866 , 2.5) ;
       \draw (2*.866, -2*.5) --( 7*.866, 1.5);
        \draw (3*.866, -3*.5) -- (7*.866, .5) ;
\end{tikzpicture}\hspace{1.5cm}
\begin{tikzpicture}
\planepartition{{2, 2,2 },{2, 2,2 }, {2, 2,1 }, {2, 2,1 }}
\end{tikzpicture}

\caption{If we tile a hexagonal region of triangles (left) with rhombi, we have a plane partition (right). Shown above is the plane partition $((2, 2, 2), (2, 2, 2), (2, 2, 1), (2, 2, 1))$.}
\label{fig:honeycomb2}
\end{figure}
\fi

Many methods from different areas of mathematics have been used to study matching complexes (see \cite{Michelle} for a thorough survey). Examples include 
%discrete Hodge theory and symmetric function theory \cite{Friedman}, 
techniques such as long exact sequences \cite{Bouc}, shellability of face posets \cite{Shareshian_Wachs, Woodroofe}, combinatorial topology~\cite{Marietti_Testa_forests}, and, more recently, discrete Morse matchings \cite{Braun_Hough}. For an independence complex, discrete Morse matchings can be found through the Matching Tree Algorithm developed by  Bousquet-Melou, Linusson, and Nevo~\cite{Bou_Lin_Nevo}.

Most results of matching complexes focus on understanding the homology and connectivity (e.g.~\cite{Bjorner_etal, Shareshian_Wachs, Ziegler}).
For a deeper review we refer the interested reader to \cite[Chapter 11]{Jakob}.
Determining the homotopy type of matching complexes has proven to be more difficult. 

%Beyond Kozlov's results for paths and cycles, Matsushita's recent result for grid graphs, and Marietti's and Testa's result for forests, the literature concerning homotopy type of matching complexes is sparse.
%\hnote{Question: Do we update the previous paragraph to include references to Matsushita's recent paper and Singh's paper? My vote is to sidestep the issue by ending the previous paragraph with "Determining the homotopy type of matching complexes has proven to be more difficult."}
%\anote{Yeah seems like a good solution to me}
%\jnote{I am OK with that.} 
%\mnote{Me too.}

%%%%%%%%%%%%%%%%%%%%%%
\subsection{Our Contributions} 
\label{sec:contributions}
Before examining specific families of graphs, we provide the necessary background
in Section~\ref{sec:background}, including a detailed description of the Matching Tree Algorithm in Section~\ref{mta}. 
%In addition, we develop tools to inductively determine the homotopy type of matching complexes in Section~\ref{operations}.
Next, we look at
general properties of matching complexes and restrictions that naturally arise on these complexes in Section~\ref{diameter}. 

In Section~\ref{polygon}, we use discrete Morse matchings to find upper and lower bounds on the dimensions of critical cells for the homotopy type of a line of polygons (Theorem \ref{2ngons}) which provides partial results for honeycomb graphs. Further, we give bounds on the dimensions of critical cells for $2 \times 1 \times t$ honeycomb graphs (Theorem \ref{2by1bym}).

In Section~\ref{sec:trees}, we build on a result by Marietti and Testa that shows the matching complex of any forest is either contractible or homotopy equivalent to a wedge of spheres (Theorem \ref{thm:forest}~\cite[Theorem 4.13]{Marietti_Testa_forests}). We do so by using operations from Section~\ref{operations} to determine the explicit homotopy type for general caterpillar graphs (Theorem \ref{thm:gencat} and Theorem \ref{thm:m0}). For a large class of caterpillar graphs, the number of spheres in each dimension has a nice combinatorial interpretation. When the number of spheres does not have a nice formula, we can still determine the explicit homotopy type inductively (Theorem \ref{thm:reallygencat}). We conclude Section~\ref{sec:trees} with a connectivity bound for perfect binary trees, which we prove is tight (Theorem \ref{prop:perfectbinary} and Theorem \ref{prop:ref}).
Section~\ref{sec:future} contains open questions. 

%\anote{I like the changes!}
%\jnote{Only suggestion is to remove "several" from last sentence} 
\iffalse
\hnote{I'm trying to think of a way to include in this section that one of the things our paper contributes is a nice explanation of the Matching Tree Algorithm. Something like ending the above paragraph with: "Section~\ref{sec:background} contains all of the necessary background needed for our results. Section~\ref{mta} includes a detailed description of the Matching Tree Algorithm compared to those given in other papers."
}
\jnote{How about adding to last sentence of last paragraph ",which includes a detailed description of the MTA, Section~\ref{mta}."} 
\hnote{Made something close to this suggested change. }
\fi

%%%%%%%%%%%%%%%%%%%%%%%%%%%%%%%%%%%%%%%%%%%%%%%%%
\section{Background}
\label{sec:background}

\subsection{Discrete Morse Theory}

Discrete Morse Theory is a tool that was developed by Forman~\cite{Forman} as a way to find the homotopy type of a complex by pairing its faces. 
These pairings form a sequence of {\em collapses} on the complex, resulting in a homotopy equivalent cell complex.

\begin{definition}
A \emph{partial matching} in a poset $P$ is a partial matching on the underlying graph of the Hasse diagram of $P$. In other words, it is a subset $M \subseteq P \times P$ such that: 
\begin{itemize}
\item $(a,b) \in M$ implies $a \prec b$ and 
\item each $a \in P$ belongs to at most one element in $M$. 
\end{itemize}
When $(a,b) \in M$, we write $a = d(b)$ and $b = u(a)$.\\
A partial matching is \emph{acyclic} if there does not exist a cycle $$a_1 \prec u(a_1) \succ a_2 \prec u(a_2) \succ \cdots \prec u(a_m) \succ a_1 $$ with $m \geq 2$ and all $a_i \in P$ distinct. 

\end{definition}

Given an acyclic partial matching $M$ on a poset $P$, we call an element \emph{critical} if it is unmatched. If every element is matched by $M$, $M$ is called \emph{perfect}. The main theorem of discrete Morse theory~\cite{Kozlov} captures the effect of acyclic matchings.

\begin{theorem}\label{thm:main_mta}
Let $\Delta$ be a polyhedral cell complex and let $M$ be an acyclic matching on the face poset of $\Delta$. Let $c_i$ denote the number of critical $i$-dimensional cells of $\Delta$. The space $\Delta$ is homotopy equivalent to a cell complex $\Delta_c$ with $c_i$ cells of dimension $i$ for each $i \geq 0$, plus a single $0$-dimensional cell in the case where the empty set is paired in the matching. 
\end{theorem}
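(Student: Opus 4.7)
The plan is to represent the acyclic matching $M$ as a sequence of elementary collapses on $\Delta$, and so realize $\Delta \simeq \Delta_c$ explicitly. Recall that if $\sigma$ is a cell of a CW (or polyhedral) complex and $\tau$ is a codimension-one face of $\sigma$ that is contained in no other cell of the complex (a \emph{free face}), then removing the open cells of $\sigma$ and $\tau$ yields a deformation retract. Each matched pair $(a, u(a)) \in M$ is the intended input for such a collapse, with $a$ playing the role of the free face of $u(a)$; the content of the theorem is that acyclicity is precisely what allows all of these candidate collapses to be sequenced into a legitimate collapsing schedule.

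To organize these collapses I would use the standard device of the \emph{modified Hasse diagram} $D(M)$: start with the Hasse diagram of the face poset, orient every edge downward from a cell to its codimension-one faces, and then reverse the direction of each matched edge so that it now points upward from $a$ to $u(a)$. Acyclicity of $M$ is exactly the condition that $D(M)$ contains no directed cycle. The key technical lemma to establish is that any acyclic matching on a nonempty complex with at least one matched pair admits a pair $(a, u(a))$ for which $a$ is already a free face of $u(a)$ in $\Delta$. The proof is a short combinatorial argument inside $D(M)$: one locates a source of the subgraph of $D(M)$ whose lower element is matched upward, using the absence of directed cycles to guarantee that every competing cover of $a$ must already be accounted for by the orientation.

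With this lemma in hand, the theorem follows by induction on the number of matched pairs. Perform an elementary collapse on a pair $(a, u(a))$ supplied by the lemma, remove $a$ and $u(a)$ from the face poset, and verify that the restricted matching remains acyclic with the same set of critical cells; then apply the inductive hypothesis to the smaller complex. In the special case where $\emptyset$ is matched to a vertex $v$ in $P$, this final pair cannot be realized as a geometric collapse because $\emptyset$ is not a topological cell; instead, one records a single $0$-cell as a basepoint for $\Delta_c$, which accounts for the leftover vertex $v$. The main obstacle is the collapsibility lemma, since every other step amounts to bookkeeping; once one is comfortable that acyclicity forces the required source to exist in $D(M)$, the rest of the argument essentially writes itself. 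This is the line of proof carried out in detail in Kozlov's textbook.
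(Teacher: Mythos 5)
Your strategy---realizing the whole acyclic matching as a schedule of elementary collapses of $\Delta$ itself---does not work, and the key lemma you rely on is false. Take $\Delta$ to be the boundary of a triangle, with vertices $1,2,3$ and edges $12,13,23$, and the acyclic matching $(1,12),(2,23)$, whose critical cells are the vertex $3$ and the edge $13$. This matching is acyclic and has matched pairs, yet $\Delta$ has no free face at all (every vertex lies in two edges), so there is no pair $(a,u(a))$ with $a$ a free face of $u(a)$ in $\Delta$ and your induction cannot even begin. The source argument in the modified Hasse diagram does not rescue this: a matched cell $a$ receives an incoming (downward) arrow from every coface of $a$ other than $u(a)$, including critical cofaces, so the sources of $D(M)$ may all be critical cells---in the example, $13$ is the only source. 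More structurally, a sequence of elementary collapses always terminates in a subcomplex of $\Delta$, whereas the complex $\Delta_c$ of the theorem is built from the critical cells, which in general do not form a subcomplex (they do not in the example above, nor in the paper's Example 2.4, where the conclusion is $M(G)\simeq S^2\vee S^2$). Collapsing alone can only prove the special case in which the critical cells do form a subcomplex, e.g.\ collapsibility when there is a single critical vertex.

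The actual proof (Forman; Kozlov, whom the paper cites rather than reproving the theorem) is genuinely different from what you describe: one chooses a linear extension of the face poset in which each matched pair $(a,u(a))$ occurs consecutively---acyclicity is exactly what guarantees such an extension exists---and builds a CW complex by attaching cells in that order. Each critical cell contributes a new cell of the same dimension; for each matched pair one attaches both cells and then cancels them, using that $a$ is a free face of $u(a)$ \emph{relative to the partial complex built so far}, and this cancellation is a homotopy equivalence that modifies the attaching maps of the cells attached later, rather than passing to a subcomplex of $\Delta$. So your attribution of the collapse-only argument to Kozlov's textbook is also inaccurate; to repair your write-up you would need to replace the false free-face lemma by this ordered cell-attachment induction (or some equivalent formulation, e.g.\ via discrete vector field flows).
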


%%%%%%%%%%%%%%%%%%%%%%%%%%%%%%%%%%%%%%%%%%%%%%%%%%%

\subsection{Matching Tree Algorithm (MTA)}\label{mta}

 Let $G$ be a simple graph with vertex set $V(G)$. The Matching Tree Algorithm, due to  Bousquet-Melou, Linusson, and Nevo~\cite{Bou_Lin_Nevo}, is a process which constructs a discrete Morse matching on $\Sigma(\textrm{Ind}(G))$, the face poset of the independence complex of $G$. The \emph{face poset} is defined to be the poset of nonempty faces ordered by inclusion. We shorten  $\Sigma(\textrm{Ind}(G))$ to $\Sigma$ when $G$ is clear. Bousquet-Melou, Linusson, and Nevo motivate their algorithm with the following observation. One way to find a matching of $\Sigma$ is to pick a vertex $v \in V(G)$ with set of neighbors $N(v)$, and then take as the matching all the pairs $(I, I \cup \{ v\})$ for each $I \in \Sigma$ such that $I \cap N(v) = \emptyset$. There may be many unmatched elements, since any element of $\Sigma$ that has nonempty intersection with $N(v)$ will not be in the matching. Choose one of these vertices, and repeat this process as many times as possible. This matching procedure gives rise to a rooted tree, called a matching tree of $\Sigma$, whose nodes keep track of unmatched elements at each step. 

The Matching Tree Algorithm generates a matching tree of $\Sigma$, which is a binary tree whose nodes are either $\emptyset$ or of the form:
 \[
 \Sigma(A,B) = \{I \in \Sigma: A \subseteq I \text{ and } B \cap I = \emptyset\}
 \]
where $A,B \subset V(G)$ with $A \cap B = \emptyset$.
Our version of the Matching Tree Algorithm is modified from its presentation in Braun and Hough~\cite{Braun_Hough} (see Remark \ref{rem:mixitup}). \\

\noindent {\bf Matching Tree Algorithm.} Beginning with the root node $\Sigma(\emptyset,\emptyset)$, at each node $\Sigma(A,B)$ where $A \cup B \not= V(G)$ apply the following procedure:

\begin{enumerate}

\item If there is a vertex $v \in V(G)\setminus (A \cup B)$ such that $N(v) \setminus(A \cup B) = \emptyset$ then $v$ is called a \textit{free vertex}. Give $\Sigma(A,B)$ a single child labeled $\emptyset$.

\item Otherwise, if there is a vertex $v \in V\setminus (A \cup B)$ such that $N(v) \setminus(A \cup B)$ is a single vertex $w$, then $v$ is called a \textit{pivot} and $w$ is called a \textit{matching vertex}. Give $\Sigma(A,B)$ a single child labeled $\Sigma(A \cup \{w\}, B \cup N(w))$.

\item When there is no vertex that satisfies (1) or (2) and $A \cup B \not= V(G)$, we call $\Sigma(A,B)$ \textit{split ready} and the graph induced by $V(G) \setminus (A \cup B)$ a \emph{configuration}. We choose any vertex $v \in V(G) \setminus (A \cup B)$, which we call the \textit{splitting vertex}. Give $\Sigma(A,B)$ two children: $\Sigma(A \cup \{v\},B \cup N(v))$, which we call the \textit{right child}, and $\Sigma(A,B \cup \{v\})$, which we call the \textit{left child}.

\end{enumerate}

\begin{remark}\label{rem:mixitup}
The algorithm as presented in \cite{Braun_Hough} allows one to do steps (1), (2), and (3) in any order, but we found that our ordering worked well for our purposes. Note that when any ordering is allowed, if $w$ is a matching vertex with respect to pivot $v$, we could do either step (2) or step (3). If we choose $w$ as a splitting vertex, $\Sigma(A, B \cup \{w\})$ has $v$ as a free vertex and therefore will have the unique child $\emptyset$. Thus, the only remaining child is $\Sigma(A\cup \{w\}, B  \cup N(w))$, as in step (2). 
\end{remark}

We have the following theorem which is due to Bousquet-Melou, Linusson, and Nevo, but is stated below as it appears in Braun and Hough~\cite{Braun_Hough}.

\begin{theorem} 
\label{thm:MTA}
A matching tree for $G$ yields an acyclic partial matching on the face poset of $\textrm{Ind}(G)$ whose critical cells are given by the non-empty sets $\Sigma(A, B)$ labeling non-root leaves of the matching tree. In particular, for such a set $\Sigma(A, B)$, the set $A$ yields a critical cell in $\textrm{Ind}(G)$.
\end{theorem}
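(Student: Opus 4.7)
The plan is to verify three things in sequence: that the algorithm defines a partial matching on $\Sigma(\text{Ind}(G))$, that this matching is acyclic, and that the critical cells are exactly the non-empty $\Sigma(A,B)$ at non-root leaves.

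First, I would establish a structural invariant maintained throughout the algorithm: at every node $\Sigma(A,B)$ produced by the algorithm, the set $A$ is independent in $G$, $A \cap B = \emptyset$, and $N(A) \subseteq B$. This holds trivially at the root and is preserved by each of the three operations. In step (2), if $w$ is the matching vertex, then $w$ cannot be adjacent to any $a \in A$, as this would force $w \in N(A) \subseteq B$, contradicting $w \notin A \cup B$; hence $A \cup \{w\}$ is still independent, and $N(A \cup \{w\}) \subseteq B \cup N(w)$. The splitting step is similar.

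Second, I would describe the matching explicitly. At a step (1) node with free vertex $v$, every $I \in \Sigma(A,B)$ satisfies $N(v) \cap I = \emptyset$: from the invariant $v \notin N(A)$, and $N(v) \setminus (A \cup B) = \emptyset$, so $N(v) \cap I \subseteq A \cap N(v) = \emptyset$. Thus each $I$ with $v \notin I$ is paired with $I \cup \{v\}$, matching everything in $\Sigma(A,B)$. At a step (2) node with pivot $v$ and matching vertex $w$, the same computation shows that any $I \in \Sigma(A,B)$ with $w \notin I$ has $N(v) \cap I = \emptyset$, so we pair such $I$ (with $v \notin I$) to $I \cup \{v\}$; the remaining cells, those with $w \in I$, form precisely $\Sigma(A \cup \{w\}, B \cup N(w))$ and are passed to the child. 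Step (3) introduces no pairs, giving the disjoint decomposition $\Sigma(A,B) = \Sigma(A \cup \{v\}, B \cup N(v)) \sqcup \Sigma(A, B \cup \{v\})$. Every cell is visited at a unique node and is matched or inherited there; this yields a well-defined partial matching.

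Third, acyclicity. This is where I expect the main obstacle to lie. Suppose for contradiction we had an alternating cycle $a_1 \prec u(a_1) \succ a_2 \prec u(a_2) \succ \cdots \prec u(a_m) \succ a_1$. Each matched pair $(a_i,u(a_i))$ is created at a unique node $\Sigma(A_i,B_i)$ of the matching tree, with $u(a_i) = a_i \cup \{v_i\}$ where $v_i$ is the pivot (or free) vertex active at that node. The down step from $u(a_i)$ to $a_{i+1}$ removes some vertex $w_i \neq v_i$, and this move either descends to a child along a splitting or step-(2) branch or re-enters an earlier branch. I would traverse the matching tree in a root-to-leaf order, assign each cell the unique node processing it, and then argue that the sequence of nodes visited by the cycle must cover a strict descent in depth (or an earlier-pivot choice at the same ancestor) at some point, which is impossible in a tree. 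The precise bookkeeping mirrors the acyclicity argument of Bousquet-M\'elou, Linusson, and Nevo; the delicate point is verifying that step-(3) splittings do not allow a cycle to ``cross'' between the two subtrees, which follows because the splitting vertex is in $A$ on one side and in $B$ on the other, and hence no single cell lies in both subtrees.

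Finally, an element is critical iff it is unmatched. A cell belongs to a leaf of the matching tree (the algorithm terminates when $A \cup B = V(G)$, and the only leaves labeled $\emptyset$ occur after a free-vertex step, where everything has already been paired). At a non-empty leaf $\Sigma(A,B)$ with $A \cup B = V(G)$, the set is a singleton $\{A\}$, giving a critical face $A \in \text{Ind}(G)$, as claimed.
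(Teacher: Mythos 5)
There is no in-paper proof to compare you against: the paper states Theorem~\ref{thm:MTA} as a quoted result of Bousquet-M\'elou, Linusson, and Nevo, in the form it appears in Braun and Hough, and never proves it. Judged on its own terms, the first two stages of your argument are correct and essentially complete: the invariant that $A$ is independent, $A\cap B=\emptyset$, and $N(A)\subseteq B$ is preserved by all three steps; it justifies that every $I\in\Sigma(A,B)$ avoids $N(v)$ for the free or pivot vertex $v$, so the pairing $(I,I\cup\{v\})$ is well defined; the cells left over at a pivot step are exactly $\Sigma(A\cup\{w\},B\cup N(w))$, and a splitting step gives the disjoint decomposition you state; and a nonempty leaf with $A\cup B=V(G)$ is the singleton $\{A\}$, which identifies the critical cells. (One small convention to address: since the paper's face poset consists of nonempty faces, you should say explicitly that the empty face is allowed into the matching, as Theorem~\ref{thm:main_mta} anticipates; otherwise at the first matching node the cell $\{v\}$ has no partner and would be spuriously critical.)

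The genuine gap is acyclicity, which is the mathematical heart of the theorem, and as written your third stage is a plan rather than a proof: you defer ``the precise bookkeeping'' to Bousquet-M\'elou--Linusson--Nevo, and the one concrete justification you give for the delicate point does not suffice. That no single cell lies in both subtrees of a split does not prevent an alternating cycle from containing some cells matched in the left subtree and others matched in the right: a down-step may delete a vertex belonging to $A$ at the split node, which sends the next cell of the cycle back to an ancestor node, and a priori the cycle could weave up through such ancestors and descend the other branch. To close this you need the root-to-leaf order made precise, for instance as follows: let $\varphi$ send each cell to the node at which it is matched (or to its leaf), and order the tree nodes by a pre-order traversal that visits a node before its descendants and the entire left subtree before the right subtree. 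Then $\varphi$ is order-preserving, because for $c'\subseteq c$ the first node where their routes diverge is either a pivot node whose matching vertex lies in $c\setminus c'$ (so $c'$ is matched there and $c$ continues to a descendant) or a split whose vertex lies in $c\setminus c'$ (so $c'$ goes left and $c$ goes right); here one also uses your observation that a pivot, free, or splitting vertex of a node lies in $A\cup B$ at all of its descendants, so it can never be the added vertex of an up-step below that node. Within each fiber of $\varphi$ all matched pairs differ by the same vertex $v$, so a fiber contains no alternating cycle, and the cluster (patchwork) lemma then gives acyclicity of the union; alternatively one can reproduce the induction of Bousquet-M\'elou--Linusson--Nevo directly. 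Without an argument of this kind, the acyclicity step---and hence the theorem---is not established by your proposal.
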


%%%%%%HEY WE DID A THING HERE DONT FORGET%%%%%%%%%%%%
\newpage 
%%%%%%URFBEUBFEUFB%%%%%%%%%%

\begin{example}
\label{ex:mta}

Let $G$ be the $1 \times 1 \times 2$ honeycomb graph, whose line graph $L(G)$ is shown below:

\begin{center}
\begin{tikzpicture}[scale = .85]
\filldraw (0,1) circle (0.1cm); 
\filldraw (1,1) circle (0.1cm); 
\filldraw (-.5,1.5) circle (0.1cm); 
\filldraw (1.5,1.5) circle (0.1cm); 
\filldraw (3.5,1.5) circle (0.1cm); 
\filldraw (2,1) circle (0.1cm);
\filldraw (3,1) circle (0.1cm); 
\filldraw (0,2) circle (0.1cm); 
\filldraw (1,2) circle (0.1cm);
\filldraw (2,2) circle (0.1cm);  
\filldraw (3,2) circle (0.1cm);

\node at  (-.8,1.5) {$1$};
\node at  (1.1,1.5) {$6$};
\node at  (3.9,1.5) {$11$};
\node at  (0,2.5) {$2$};
\node at  (2,2.5) {$7$};
\node at  (3,2.5) {$9$};
\node at  (2,.5) {$8$};
\node at  (3,.5) {$10$};
\node at  (0,.5) {$3$};
\node at  (1,2.5) {$4$};
\node at  (1,.5) {$5$};

\draw[thick] (0, 1) -- (3, 1);
\draw[thick] (0, 2) -- (3, 2);
\draw[thick] (0, 2) -- (-.5,1.5) -- (0, 1);
\draw[thick] (1, 2) -- (1.5,1.5) -- (1, 1);
\draw[thick] (2, 2) -- (1.5,1.5) -- (2, 1);
\draw[thick] (3, 2) -- (3.5,1.5) -- (3, 1);
%\draw[thick] (4, 2) -- (3.5,1.5) -- (4, 1);
\end{tikzpicture}
\end{center}

$\Sigma(\emptyset, \emptyset)$ is split ready so we must choose a splitting vertex. Choosing $1$ produces two children, $\Sigma( \emptyset, \{1\})$ and $\Sigma(\{1\}, \{2, 3\} )$, as shown in Figure~\ref{fig:mtaex}.
We first consider the left child. Since $4$ is the only neighbor of $2$ outside of $\{1\}$, we use $4$ as a matching vertex with respect to the pivot $2$. This gives
$\Sigma( \emptyset, \{1\})$ one child that we label 
$\Sigma( \{4 \}, \{1, 2, 6, 7\})$. Similarly, we use $5$ as a matching vertex with respect to the pivot $3$, obtaining the child 
$\Sigma( \{4,5 \}, \{1, 2, 3, 6, 7,8\})$. Finally, we use $11$ as a matching vertex, concluding this branch of the tree with the node
$$\Sigma( \{4,5, 11 \}, \{1, 2, 3 , 6, 7, 8, 9, 10\}).$$

The $\Sigma(\{1\},\{2,3\})$ branch of the tree is split ready. Choosing $6$ as  splitting vertex and continuing the algorithm produces the matching tree shown in Figure
\ref{fig:mtaex}. The resulting matching tree has three leaf nodes:
$$\emptyset,~ \Sigma( \{1,6,11\}; \{ 2,3,4,5,7,8,9,10\}),\text{ and } \Sigma( \{4,5, 11 \}; \{1, 2, 3 , 6, 7, 8, 9, 10\}).$$

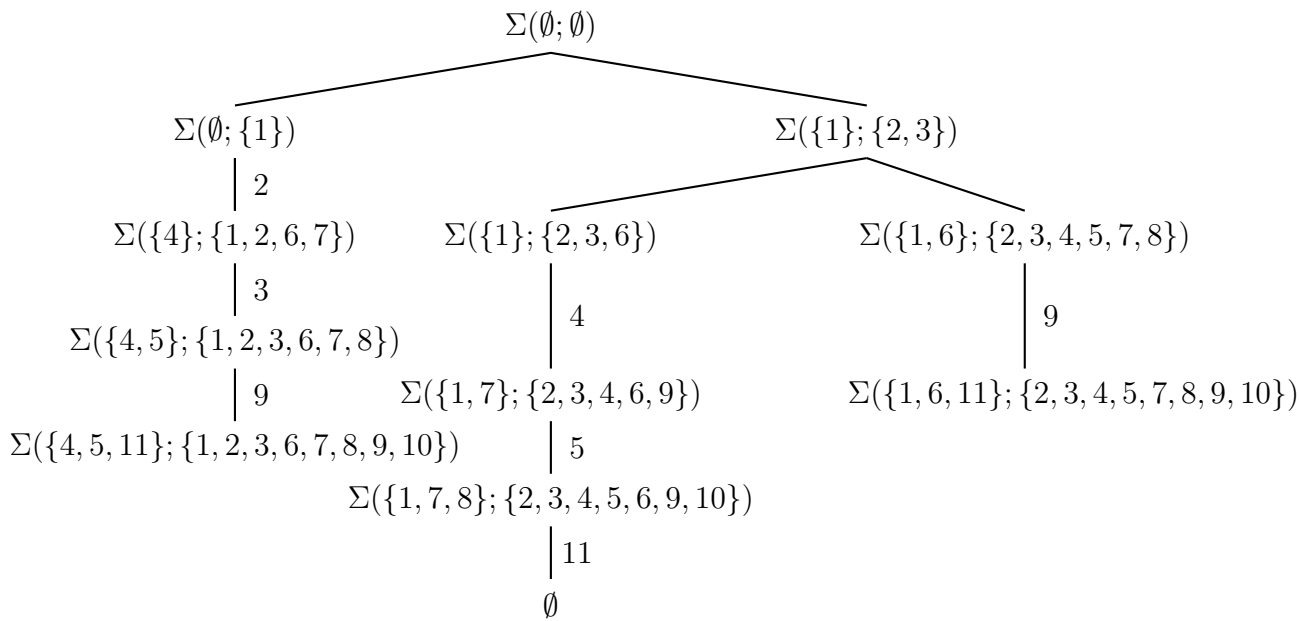
\begin{figure}

\begin{center}
\begin{tikzpicture}[scale = .7, node distance = 3cm]
\node at  (0, 10) {$\Sigma( \emptyset; \emptyset)$};

\draw[thick] (-6,8.5) -- (0, 9.5) -- (6, 8.5);

\node at  (-6, 8) {$\Sigma( \emptyset;  \{1\})$};
\node at  (-6, 6) {$\Sigma( \{4\};  \{1, 2,6, 7\})$};
\node at  (-6, 4) {$\Sigma( \{4,5\}; \{ 1, 2,3, 6,7,8\})$};
\node at  (-6, 2) {$\Sigma( \{4,5,11\};  \{1, 2, 3 ,  6, 7,8, 9, 10\})$};

\node at (-5.5, 7) {$2$};
\node at (-5.5, 5) {$3$};
\node at (-5.5,3) {$9$};

\node at (12.5-3,5-8.5+8) {$9$};

\draw[thick] (-6,7.5) -- (-6, 6.5);
\draw[thick] (-6,7.5) -- (-6, 6.5);
\draw[thick] (-6,5.5) -- (-6, 4.5);
\draw[thick] (-6,3.5) -- (-6, 2.5);

\node at  (6, 8) {$\Sigma(   \{1\}; \{2,3\})$}; 

\node at  (4-4, 6) {$\Sigma(   \{1\};  \{2, 3, 6\})$}; 
\node at  (4-4,3) {$\Sigma(   \{1,7\};  \{2,3,4,6,9\})$}; 
\node at  (4-4, 2-1) {$\Sigma(   \{1,7,8\};  \{2,3,4,5,6,9,10\})$}; 
\node at  (4-4, 0-1) {$\emptyset$}; 

\draw[thick] (4-4,6.5) -- (6, 7.5);
\draw[thick] (4-4,5.5) -- (4-4, 3.5);
\draw[thick] (4-4,2.5) -- (4-4, 2.5-1);
\draw[thick] (4-4,1.5-1) -- (4-4, .5-1);

\node at (4.5-4, 5-0.5) {$4$};
\node at (4.5-4, 3-1) {$5$};
\node at (4.5-4,1-1) {$11$};

\draw[thick] (6,7.5)-- (12-3,6.5);
\draw[thick]  (12-3,5.5) -- (12-3, 3.5);
\node at  (12-3, 6) {$\Sigma( \{1,6\}; \{2,3,4,5,7,8\})$};
\node at  (11.5-3, 3) [text width = 4cm]{$\Sigma(\{1,6, 11 \}; \{2, 3,4,5, 7, 8, 9, 10\})$};

\end{tikzpicture}
\end{center}
\caption{The tree resulting from applying the Matching Tree Algorithm to the graph in Example \ref{ex:mta}. The pivot or free vertex at each step is labeled.  }
\label{fig:mtaex}
\end{figure}

We have two nonempty leaf nodes, which correspond to two critical cells that each have three elements. Since $M(G)$ has two critical cells of dimension 2 and one cell of dimension 0, by Theorem~\ref{thm:main_mta}, $M(G) \simeq S^2 \vee S^2$. 
In this example we were able to determine the homotopy type of $M(G)$ from the Matching Tree Algorithm because the critical cells were all of the same dimension. In general this will not always be the case, but we do always get homological information because if no critical cells appear of a given dimension, then the homology in this dimension must be trivial.

\end{example}

We call the process of applying (1) and (2) from the Matching Tree Algorithm to $\Sigma(A,B)$ until we reach a split ready descendant \textit{split preparing}. In Example~\ref{ex:mta}, when split preparing $\Sigma(\emptyset, \{1\})$, we used $4$, then $5$, and then $11$ as matching vertices. We could have used 5 as a matching vertex before using $4$. We show in the next lemma that the process of split preparing has no effect on the size of the resulting critical cells.

\begin{lemma}\label{split}
Let $G$ be a graph and let $\Sigma (A,B)$ be a leaf of a matching tree of $G$. If $\Sigma (\hat A,\hat B)$ and $\Sigma (\tilde A,\tilde B)$ are two split ready leaves obtained from split preparing $\Sigma (A,B)$, then $\Sigma (\hat A,\hat B) = \emptyset$ if and only if $\Sigma (\tilde A,\tilde B)= \emptyset$. Otherwise, $|\hat A| = |\tilde A|$ and $\hat A\cup\hat B = \tilde A\cup\tilde B$. 
\end{lemma}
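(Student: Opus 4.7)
The plan is to prove this by induction on $|V(G) \setminus (A \cup B)|$ combined with a local-confluence argument in the spirit of Newman's lemma. A preliminary observation is that both steps (1) and (2) depend only on $A \cup B$ (not on the individual partition into $A$ and $B$), and step (2) deterministically adds $N[w]$ to $A \cup B$ and $1$ to $|A|$, where $w$ is the matching vertex. Hence it suffices to show that the final $A \cup B$ and $|A|$, or the outcome $\emptyset$, are independent of the choices made during split preparation.

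The core step is local confluence: if two distinct first operations are available at $\Sigma(A, B)$, then after at most one further step each, the two branches agree on $(A \cup B, |A|)$ or both reach $\emptyset$. I would proceed by case analysis, writing $C := V(G) \setminus (A \cup B)$ for the configuration. Case (a): step (1) on a free vertex $v$ versus step (2) on a pivot-matching pair $(u, w)$. Since $N(v) \subseteq A \cup B$ but $w \notin A \cup B$, we have $v \notin N(w)$, so $v$ survives and remains free after step (2), and both branches reach $\emptyset$. Case (b): two step (2) operations with $w_1 = w_2$; the updates coincide. Case (c): two step (2) operations with $w_1 \neq w_2$ and $\{v_1, v_2\} = \{w_1, w_2\}$, so the edge $v_1 v_2$ is an isolated component of the configuration. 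The pivot condition forces every other neighbor of each $v_i$ into $A \cup B$, so both first steps add exactly $\{v_1, v_2\}$ to $A \cup B$ and $1$ to $|A|$. Case (d): two step (2) operations with $v_1, v_2, w_1, w_2$ all distinct. The pivot condition $N_C(v_i) = \{w_i\}$ forces $v_1 \notin N(w_2)$ and $v_2 \notin N(w_1)$; one then splits on whether $w_2 \in N(w_1)$. If not, the operations commute, both yielding $A \cup B \cup N[w_1] \cup N[w_2]$; if so, each first step strips the other pivot of its unique configuration-neighbor, making it free and sending both branches to $\emptyset$.

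The main obstacle is case (c), where the two first steps literally produce different $A$'s; the resolution is that the pivot condition $N_C(v_i) = \{w_i\} = \{v_{3-i}\}$ confines all other neighbors of $v_i$ to $A \cup B$, so the invariants $A \cup B$ and $|A|$ still agree despite this partition asymmetry. Once local confluence is established, strong induction on $|V(G) \setminus (A \cup B)|$ completes the proof: both branches land in strictly smaller states with matching invariants, and the inductive hypothesis identifies their outputs as either the same $(\hat A \cup \hat B, |\hat A|)$ or both equal to $\emptyset$.
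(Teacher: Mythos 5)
Your argument is correct, but it follows a genuinely different route from the paper. The paper argues by taking a vertex-minimal counterexample and performing a global exchange argument on the two full sequences of matching vertices: the first matching vertex $v_1$ of one sequence is shown to be insertable at the front of the other (possibly after swapping the roles of $v_1$ and its pivot $p$, which is exactly your case (c) in disguise), after which one passes to $G\smallsetminus\overline{N}(v_1)$ and contradicts minimality. You instead prove a local statement — any two available first moves can be reconciled after at most one further step, measured in the invariant $(A\cup B,\,|A|)$ — and then close with strong induction on the configuration size, in the style of Newman's lemma; your preliminary observation that the dynamics of steps (1) and (2) depend only on $A\cup B$ is essential and correctly anticipates that case (c) produces different sets $A$ with equal invariants, which is precisely why the lemma asserts only $|\hat A|=|\tilde A|$ and $\hat A\cup\hat B=\tilde A\cup\tilde B$. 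What your approach buys is a cleanly structured, checkable case analysis (your cases (b)--(d) are exhaustive, since a pivot determines its matching vertex uniquely) and a reusable commutation principle; the paper's minimal-counterexample argument is shorter on the page but its exchange step is harder to verify. One point you should make explicit: in the paper's version of the algorithm, step (2) is only performed when no free vertex exists (see Remark~\ref{rem:mixitup}), so your case (a) never arises as a branching, and in case (d1) the "commuting" second move from a child may be pre-empted if that child has acquired a free vertex. This is harmless, and your own observation closes the gap: a free vertex persists under any step-(2) move, so either no free vertex appears and both branches join at the common state $A\cup B\cup\overline{N}(w_1)\cup\overline{N}(w_2)$ with $|A|$ increased by $2$, or a free vertex survives into that common state and all continuations of both branches terminate in $\emptyset$; equivalently, prove the statement for the permissive version of the algorithm in which (1) and (2) may be applied in any order, which subsumes the paper's prioritized split preparing.
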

\begin{proof}

For a set of vertices $S$, let $\overline{N}(S):=\bigcup\limits_{v \in S} \overline{N}(v)$, where $\overline{N}(v) := v \cup N(v)$. If the lemma does not hold, then we must be able to satisfy one of three conditions:
\begin{enumerate}
\item $\Sigma (\hat A,\hat B) = \emptyset$ but $\Sigma (\tilde A,\tilde B) \not= \emptyset$,
\item $|\hat A| \not= |\tilde A|$, or
\item $\hat A\cup\hat B \not= \tilde A\cup\tilde B$.
\end{enumerate}

We can assume without loss of generality that $A= B = \emptyset$. For one of these conditions to be true, there exists vertex minimal graph $G$ such that $v_1,\ldots,v_k$ and $\tilde v_1,\ldots,\tilde v_h$ are sequences of vertices with the following properties:
\begin{enumerate}[label=(\alph*)]

\item For each $i$, $v_i$ is a matching vertex of $V(G )\setminus \overline{N}(\{v_1,\ldots,v_{i-1}\})$ and $\tilde v_i$ is a matching vertex of $V(G) \setminus \overline{N}(\{\tilde v_1,\ldots,\tilde v_{i-1}\})$.

\item $V(G) \setminus \overline{N}(\{v_1,\ldots,v_{k}\})$ and $V(G) \setminus \overline{N}(\{\tilde v_1,\ldots,\tilde v_{h}\})$ contain no matching vertices.

\item $V(G) \setminus \overline{N}(\{\tilde v_1,\ldots,\tilde v_{h}\})$ contains no free vertices. %Note that if $V(G) \setminus \overline{N}(\{v_1,\ldots,v_{k}\})$ contains free vertices then condition (1) is satisfied. 

\item If $V(G) \setminus \overline{N}(\{v_1,\ldots,v_{k}\})$ contains no free vertices, then $k \not= h$ %(satisfying condition (2)) 
or \\$V(G) \setminus \overline{N}(\{v_1,\ldots,v_{k}\}) \not= V(G) \setminus \overline{N}(\{\tilde v_1,\ldots,\tilde v_{h}\})$.% (satisfying condition (3)).
\end{enumerate}

To see why these properties must hold, note that $v_1,...,v_k$ and $\tilde v_1,...,\tilde v_h$ are both maximal lists of vertices that can be added to $A$ while split preparing (by properties (a) and (b)). Once there are no more matching vertices, we check for free vertices. If both $V(G) \setminus \overline{N}(\{v_1,\ldots,v_{k}\})$ and $V(G) \setminus \overline{N}(\{\tilde v_1,\ldots,\tilde v_{h}\})$ have a free vertex, we cannot satisfy any of the three conditions, justifying property (c). If only one of them has a free vertex, we satisfy condition (1). If neither has a free vertex, then we need either $k \not= h$ to satisfy condition (2) or $V(G) \setminus \overline{N}(\{v_1,\ldots,v_{k}\}) \not= V(G) \setminus \overline{N}(\{\tilde v_1,\ldots,\tilde v_{h}\})$ to satisfy condition (3). 

We will show by contradiction that these properties cannot all hold. By definition of matching vertices, there must be some pivot $p$ such that $N(p) = \{v_1\}$. If $v_1 = \tilde v_i$ for some $i$, then we can reorder $(\tilde v_1,\ldots,\tilde v_{h})$ by placing $\tilde v_i$ at the beginning without changing $V(G) \setminus \overline{N}(\{\tilde v_1,\ldots,\tilde v_{h}\})$. This means that the sequences $v_2,\ldots,v_k$ and $\tilde v_1,\ldots,\tilde v_{i-1},\tilde v_{i+1},\ldots,\tilde v_h$ would satisfy the above properties on the graph $G \setminus (\overline{N}(v_1))$, contradicting the minimality of $G$.

If $v_1 \not\in \{\tilde v_1,\ldots,\tilde v_{h}\}$ 
then there must be some minimal $i$ such that $p$ is not a pivot in $V(G) \setminus \overline{N}(\{\tilde v_1,\ldots,\tilde v_i\})$. This means at least one of $\{p, v_1\}$ is in $\overline{N}(\{\tilde v_1,\ldots,\tilde v_i\})$ and not in $\overline{N}(\{\tilde v_1,\ldots,\tilde v_{i-1}\})$. This is only possible if (i) $p = \tilde v_i $, (ii) $v_1 = \tilde v_i$, (iii) $p \in N(\tilde v_i)$, or (iv) $v_1 \in N(\tilde v_i)$.

%If $v_1 \not\in \{\tilde v_1,\ldots,\tilde v_{h}\}$ 
%then there must be some minimal $i$ such that $p$ is not a pivot in $V(G) \setminus \overline{N}(\{\tilde v_1,\ldots,\tilde v_i\})$. This means $p, v_1 \in \overline{N}(\{\tilde v_1,\ldots,\tilde v_i\})$ but $p, v_1 \notin \overline{N}(\{\tilde v_1,\ldots,\tilde v_{i-1}\})$. This is only possible if (i) $p = \tilde v_i $, (ii) $v_1 = \tilde v_i$, (iii) $p \in N(\tilde v_i)$, or (iv) $v_1 \in N(\tilde v_i)$. 

We have already ruled out (ii); since $N(p) = \{v_1\}$, (iii) is the same as (ii); and (iv) would make $p$ a free vertex. Thus, we only need to consider (i), where $p = \tilde v_i $. 

If $p = \tilde v_i$, then $N(v_1) = \{p\}$ is in $V(G) \setminus \overline{N}(\{\tilde v_1,\ldots,\tilde v_{i-1}\})$. Since $p$ is a pivot with respect to $v_1$, $N(p) = \{v_1\}$ %is in $V(G)$ 
is also in $V(G) \setminus \overline{N}(\{\tilde v_1,\ldots,\tilde v_{i-1}\})$. This means that $$\overline{N}(\{\tilde v_1,\ldots,\tilde v_{i-1},p,\tilde v_{i+1},\ldots,\tilde v_h\}) = \overline{N}(\{\tilde v_1,\ldots,\tilde v_{i-1},v_1,\tilde v_{i+1},\ldots,\tilde v_h\}).$$ Then, we can reorder $\tilde v_1,\ldots,\tilde v_{i-1},v_1,\tilde v_{i+1},\ldots,\tilde v_h$ so that $v_1$ comes first which leads to the contradiction of the minimality of $G$ as before. 

\end{proof}

\begin{remark}\label{rem:splitorder}
While the process of split preparing has no affect on the size of the resulting critical cells, our choice of splitting vertices affects both the efficiency of the algorithm and the resulting critical cells. The Matching Tree Algorithm
%process of split preparing 
will always produce a discrete Morse matching of the independence complex, but it won't always be the same discrete Morse matching. Two different discrete Morse matchings may give rise to two (homotopy equivalent) cell complexes with a different number or dimensions of cells. For this reason, our proofs in Section~\ref{polygon} involve careful choices of splitting vertices.
%Because of this, any acyclic matching resulting from MTA will provide a lower bound on the connectivity, but these lower bounds may not all be identical.

\end{remark}

We conclude the section with a lemma that enables us to give a lower bound on the size of the critical cells of a matching complex for a general graph using the Matching Tree Algorithm.

\begin{lemma}\label{lem:buzzerbeater}
Let $G$ be a graph and let $\{v_1,\ldots,v_k\} \subseteq V(G)$ such that $\overline N(v_1)$, $\overline N(v_2)$,$\ldots$, and $\overline N(v_k)$ are pairwise disjoint. There exists a discrete Morse matching of $\textrm{Ind}(G)$ whose critical cells all have at least $k$ elements. 
\end{lemma}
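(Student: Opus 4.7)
My plan is to show that at every non-empty leaf $\Sigma(A,B)$ produced by the algorithm, the set $A$ contains at least one vertex from each closed neighborhood $\overline{N}(v_i)$. Because the $\overline{N}(v_i)$ are pairwise disjoint by hypothesis, this immediately gives $|A| \geq k$, and Theorem~\ref{thm:MTA} then identifies $|A|$ with the size of the corresponding critical cell. The first preliminary observation I would make is that at any non-empty leaf we must have $A \cup B = V(G)$, since otherwise one of rules (1), (2), or (3) would force a further descendant; in particular every $v_i$ lies in either $A$ or $B$ at such a leaf.

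Fix a non-empty leaf $\Sigma(A,B)$ and an index $i \in [k]$. If $v_i \in A$, then $v_i \in A \cap \overline{N}(v_i)$ and there is nothing more to do. Otherwise $v_i \in B$, and I would trace back along the path from the root to the leaf to locate the step at which $v_i$ was first placed into $B$. The only mechanisms that add a vertex to $B$ are: step~(2), which adds $N(w)$ at the moment $w$ is promoted to $A$ as a matching vertex; step~(3) taking the right child, which adds $N(v)$ at the moment $v$ is promoted to $A$ as a splitting vertex; and step~(3) taking the left child, which adds the splitting vertex $v$ itself to $B$. The third mechanism would require $v_i$ to have been chosen as a splitting vertex, which is ruled out by hypothesis, so only the first two remain. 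In either surviving case the vertex simultaneously placed into $A$ is a neighbor of $v_i$ and hence lies in $\overline{N}(v_i) \cap A$.

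Combining the two cases, for every $i \in [k]$ we have $\overline{N}(v_i) \cap A \neq \emptyset$. Because the neighborhoods $\overline{N}(v_1), \ldots, \overline{N}(v_k)$ are pairwise disjoint, the $k$ witnesses are distinct elements of $A$, so $|A| \geq k$ as claimed. The only step I expect to require some care is the complete inventory of ways a vertex can first enter $B$ during the algorithm --- once that inventory is nailed down, the argument collapses to a one-line case split, and one sees cleanly why the hypothesis ``no $v_i$ is ever chosen as a splitting vertex'' is exactly the condition needed: it rules out the single mechanism through which $v_i$ could land in $B$ without a corresponding neighbor being promoted into $A$.
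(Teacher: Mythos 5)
Your proof is correct and follows essentially the same route as the paper's: show that each $\overline{N}(v_i)$ meets $A$ (either $v_i \in A$, or $v_i$ can only have entered $B$ because a neighbor of $v_i$ was promoted into $A$, the left-child splitting mechanism being excluded by hypothesis), and then use the pairwise disjointness of the $\overline{N}(v_i)$ to conclude $|A| \geq k$. Your inventory of the ways a vertex can enter $B$ is in fact slightly more careful than the paper's, which compresses this step to saying that $v_i$ must be a pivot with matching vertex $w_i \in N(v_i) \subseteq \overline{N}(v_i)$ placed in $A$.
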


\begin{proof}
Apply the Matching Tree Algorithm without choosing $v_i$ as a splitting vertex for any $i \in [k]$. Assuming the Matching Tree Algorithm has not terminated, there exists
$x \in V(G) \setminus \{v_1, \ldots, v_k \}$ that can be chosen as a splitting vertex because otherwise there is a free vertex $v_i$. Once the algorithm terminates, consider an arbitrary nonempty leaf node $\Sigma(A,B)$. It is possible that no such leaf node exists, in which case the result holds. The critical cell $A$ contains at least one vertex from each $\overline N(v_i)$ for $i \in [k]$. If $v_i \in A$, we are done. Otherwise $v_i \in B$. Since $v_i$ is not a splitting vertex, $v_i$ must be a pivot with respect to some matching vertex $w_i \in N(v_i)$. Therefore, $w_i \in A$ and the result follows.
\end{proof}

%%%%%%%%%%%%%%%%%%%%%%%%%%%%%%%%%%%%%%%%

\subsection{Graph Operations}\label{operations}
An effective way to study the homotopy type of complexes arising from graphs is to inductively build the graph and study the homotopy type at each step, see for example \cite{Adamaszek, Barmak, Engstrom_ClawFree, Engstrom, Marietti_Testa}. We will use this method in Section~\ref{sec:trees}; here, we present the necessary topological background.
We will state well-known topological results without proof. Further detail can be found in \cite{Hatcher}. 

%\jnote{Two edges in a graph are \emph{incident} if they share a common vertex. Let $\overline{e} \in E(G)$ be an edge in a graph $G$. Define $N(\overline{e})$ to be the set of edges in a graph $G$ that are incident to $\overline{e}$ and let $G \cup \overline{e}$ denote the graph obtained from adding the edge $\ov{e}$ to $E(G)$.}

%%%%%%%%%%%%%%%%%%%%%%%%%%%%%%%%%%%
\iffalse
\begin{prop}
\label{prop:addedge}
Let $G$ be a graph with edge set $E(G)$ and let $M(G)$ be the matching complex of $G$. Then
%Let $G \cup \overline{e}$ denote the graph obtained from adding the edge $\ov{e}$ to $E(G)$. Then, 
\begin{itemize}
    \item[(i)] If $\overline{e} \not\in E(G),$\\ $M(G \cup \overline{e}) = M(G) \cup \{m \cup \{e\} \mid m \in M(G)\textrm{ such that } e_j \notin m \textrm{ for all } \overline{e_j} \textrm{ incident to } \overline{e}\}, $
     where $e$ is the vertex in $M(G)$ corresponding to the edge $\overline{e} \in G$. 
    \item[(ii)] If $\overline{e} \in E(G),$ 
    $M(G \smallsetminus \overline{e}) = M(G) \smallsetminus \{m \mid m \in M(G), \{e\} \in m\}$.

\end{itemize}
\end{prop}

\begin{proof}
To prove (i), note that the matchings of $G\cup \overline{e}$ not containing $\overline{e}$ are also matchings of $G$. Through deletion of $N(\overline{e})$, we get a bijection between the matchings of $G\cup \overline{e}$ containing $\overline{e}$ and the matchings of $G$ containing no edges incident to $\overline{e}$, and (i) follows. Claim (ii) is immediate. 
\end{proof}
\fi

%\mnote{Join is back, we should put it after the definition.} 
\begin{remark}
\label{rem:disjoint}
If $G$ and $H$ are disjoint graphs it is clear that
\[M(G\cup H)=M(G)*M(H),\]
where $*$ denotes the \emph{join} operation: $\Delta*\Delta'=\{\sigma\cup\sigma':\ \sigma \in\Delta,\;\sigma'\in\Delta'\}$ for two simplicial complexes $\Delta$ and $\Delta'$.
%Note that in the case where $M(H)$ is contractible, $M(G\cup H) \simeq M(G)*\text{pt} \simeq \text{pt}$. % and therefore contractible. 
\end{remark}

For a space $X$, the \emph{suspension} $SX$ is the quotient of $X \times I$ under the identifications $(x_1, 0) \sim (x_2, 0)$ and $(x_1, 1) \sim (x_2, 1)$ for all $x_1, x_2 \in X$. A suspension is homeomorphic to the join of $X$ with two points $\{a,b\}$. The \emph{cone} $CX$ of a space $X$ is $(X \times I)/ (X \times \{0\})$ and is homeomorphic to the join of $X$ with one point.

\begin{remark}\label{rmk:topology} The following properties of suspension and wedge are well-known, see \cite{Bukh,Hatcher, Whitehead}.

\begin{enumerate} 
\item Let $f: X \rightarrow X'$ and $g: Y \rightarrow Y'$ be two homotopy equivalences. Then the join of these maps $f \ast g: X \ast Y \rightarrow X' \ast Y'$ is also a homotopy equivalence.
\item $S^{m} * S^{n} \cong S^{m+n+1}$, and more generally,
$$(S^{a_1} \vee \cdots \vee S^{a_k}) * (S^{b_1} \vee \cdots \vee S^{b_{\ell}}) \simeq \bigvee\limits_{\substack{i = 1, \ldots, k \\ j = 1, \ldots, \ell}} S^{a_i + b_j + 1}$$
\item $S(S^m \vee S^n) \simeq S(S^m) \vee S(S^n) \cong S^{m+1} \vee S^{n+1}$, and more generally, $$S(S^{a_1} \vee \cdots \vee S^{a_k}) \simeq \bigvee\limits_{i=1,\ldots,k} S^{a_i + 1}$$
\end{enumerate}

\end{remark}

\begin{definition}
Let $G$ be a graph and $\overline{x} \in E(G)$. Define the \emph{deletion} of $\overline{x}$ by
\[\text{del}_{G}(\overline{x}) := \{ \overline{e} \in E(G) \mid \overline{e} \cap \overline{x}  = \emptyset \}. \]
 We will abuse notation and refer to the subgraph of $G$ spanned by the deletion of $\overline{x}$ as $\text{del}_{G}(\overline{x})$. 
\end{definition}

\begin{definition}
\label{def:mleg}
We say that a graph $G$ has an \emph{independent $m$-leg}, $m \geq 1$, if there exist edges $\overline{x}_1, \overline{x}_2, \ldots, \overline{x}_m$ and $\overline{y}$ such that $\overline{x}_1, \overline{x}_2, \ldots, \overline{x}_m$ are incident as leaves to the same endpoint of $\overline{y}$ (see Figure~\ref{fig:mleg}).
\end{definition}

%Lemma~\ref{lem:m1} and 
Lemma~\ref{lem:m} is a consequence of~\cite[Proposition 3.3]{Marietti_Testa_forests}.

\begin{lemma}
\label{lem:m}
Let $G$ be a graph with an independent $m$-leg, as shown in Figure~\ref{fig:mleg}. If $G_1 = \text{del}_{G}(\overline{x_1})$ and $G_2 = \text{del}_{G}(\overline{y})$, then $M(G) \simeq \left[\bigvee\limits_{m-1}S(M(G_1))\right] \vee S(M(G_2))$.
In particular, when $m = 1$, $M(G) \simeq S(M(G_2))$. 
\end{lemma}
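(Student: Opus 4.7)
The plan is to generalize the proof of Lemma~\ref{lem:m1} by decomposing $M(G)$ as a union of cones, one for each of the mutually incompatible edges $\overline{x}_1,\dots,\overline{x}_m,\overline{y}$, and then collapsing one of these cones.

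First, I would describe $M(G)$ structurally. Let $u$ be the common endpoint of $\overline{x}_1,\dots,\overline{x}_m,\overline{y}$, and let $v$ be the other endpoint of $\overline{y}$. Since the endpoints of each $\overline{x}_i$ other than $u$ are leaves, the only edges incompatible with $\overline{x}_i$ are the remaining edges at $u$. Consequently, by Proposition~\ref{prop:addedge}, every matching of $G$ containing $\overline{x}_i$ has the form $\{\overline{x}_i\}\cup M'$ with $M'$ a matching of $G_1$, so $x_i$ cones over $M(G_1)$. Similarly, every matching containing $\overline{y}$ has the form $\{\overline{y}\}\cup M'$ with $M'$ a matching of $G_2$, so $y$ cones over $M(G_2)$. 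A matching containing none of $\overline{y},\overline{x}_1,\dots,\overline{x}_m$ is just a matching of $G_1$. These three possibilities are mutually exclusive, and $M(G_2)\subseteq M(G_1)$, so
\[M(G) \;=\; \bigcup_{i=1}^{m} C^{x_i}(M(G_1)) \;\cup\; C^{y}(M(G_2)),\]
with intersection pattern
\[C^{x_i}(M(G_1))\cap C^{x_j}(M(G_1)) = M(G_1) \quad\text{and}\quad C^{x_i}(M(G_1))\cap C^{y}(M(G_2)) = M(G_2).\]

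Next, I would collapse $C^{x_1}(M(G_1))$. Since it is a contractible subcomplex of the CW-complex $M(G)$, the quotient map is a homotopy equivalence. Collapsing identifies $x_1$ together with all of $M(G_1)$ (and hence all of $M(G_2)$) to a single point $*$. For each $i\geq 2$, the cone $C^{x_i}(M(G_1))$ has its base $M(G_1)$ collapsed to $*$, and the standard identification $CX/X\cong SX$ yields $C^{x_i}(M(G_1))/M(G_1)\cong S(M(G_1))$ with suspension points $x_i$ and $*$. Analogously, $C^{y}(M(G_2))/M(G_2)\cong S(M(G_2))$ with suspension points $y$ and $*$. These $m$ suspensions meet only at the common point $*$, so gluing them gives a wedge, and the target homotopy equivalence
\[M(G)\;\simeq\;\Bigl[\,\bigvee_{i=2}^{m} S(M(G_1))\,\Bigr] \vee S(M(G_2))\]
follows. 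As a sanity check, the $m=1$ case recovers Lemma~\ref{lem:m1}, since the empty wedge is a point.

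The main obstacle is the bookkeeping in the quotient step: one must verify that after collapsing $C^{x_1}(M(G_1))$, each remaining piece really does contribute a suspension (rather than, say, a contractible segment, as one might incorrectly guess if one forgot that collapsing just the \emph{base} of a cone preserves the mapping cylinder structure on the interior), and that these suspensions are attached only at the single point $*$. Both facts reduce to the elementary identification $CX/X\cong SX$ and the intersection computations above, but the subtlety is in recognizing that the cones $C^{x_i}(M(G_1))$ for $i\geq 2$ and $C^{y}(M(G_2))$ have no further identifications after the collapse, so that a genuine wedge is formed.
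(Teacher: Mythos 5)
Your proposal is correct and follows essentially the same route as the paper's proof: decompose $M(G)$ as the union of the cones $C^{x_i}(M(G_1))$ and $C^{y}(M(G_2))$ glued along $M(G_1)$ and $M(G_2)$, collapse the contractible subcomplex $C^{x_1}(M(G_1))$, and observe that the remaining cones become suspensions meeting only at the collapsed point. Your write-up simply makes explicit the intersection pattern and the $CX/X\cong SX$ identification that the paper leaves implicit by referring back to Lemma~\ref{lem:m1}.
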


Lemma~\ref{lem:m} and induction can be used to recover the homotopy type for matching complexes of paths~\cite{Kozlov}. Additionally, the next example shows how the lemma can be applied to determine the homotopy type of a small family of trees. 

%\jnote{Did we want to keep this example?} 
\begin{example}
\label{ex:onechild}
Consider the family of graphs $\{T_i\}$ where $T_1$ is a path of length 2 and, in general, $T_i$ is constructed from $T_{i-1}$ by adding a path of length 2 to the middle vertex of the most recent path added. Note that $i$ is  the number of paths of length 2 used to construct $T_i$. Figure \ref{fig:onechild} depicts $T_1, T_2,$ and $T_3$.
It is easy to check that $M(T_1) \simeq S^0$, $M(T_2) \simeq S^0$, and $M(T_3) \simeq S^1$. By Lemma~\ref{lem:m}, $M(T_i) \simeq S(M(T_{i-2}))$. For example, 
\begin{itemize}
\item[] $M(T_4) \simeq S(M(T_2)) \simeq S^1$, 
\item[] $M(T_5) \simeq S(M(T_3)) \simeq S^2$, and
\item[] $M(T_6) \simeq S(M(T_4)) \simeq S^2$.
\end{itemize}
%If follows that for $n \geq 1$,
In general, for $n \geq 1$, $M(T_n) \simeq S^k$ where $k = \lfloor \frac{n-1}{2} \rfloor$.% and $n \geq 1$. 

\end{example}

%%%%%%%%%%%%%%%%%%%%%%%%%%%%%%%%%%%
\iffalse
\jnote{Remove Proof} 
\begin{proof}
As in the proof of Lemma~\ref{lem:m1}, for $i = 1, 2, \ldots, m$, $M(G_1 \cup \overline{x_{i}}) =C^{x_i}(M(G_1))$ and $M(G_2 \cup \overline{y}) = C^y(M(G_2))$.
%each $x_i$ for $i = 1,2,...,m$ forms a cone $C^{x_i}(M(G_1))$ and $y$ forms a cone $C^y(M(G_2))$. 
So, $M(G) = \bigcup\limits_{M(G_1)}\left[\bigcup\limits_{x_i}(C^{x_i}(M(G_1)))\right]\bigcup\limits_{M(G_2)} C^y(M(G_2))$.
The result follows analogously to Lemma~\ref{lem:m1}: collapsing the contractible subspace $C^{x_1}(M(G_1))$ to a point gives a homotopy equivalent space, and transforms all other cones into corresponding suspensions. %The result follows analogously to Lemma~\ref{lem:m1}, 
This is illustrated in Figure \ref{fig:mleg}.

\end{proof}
\fi

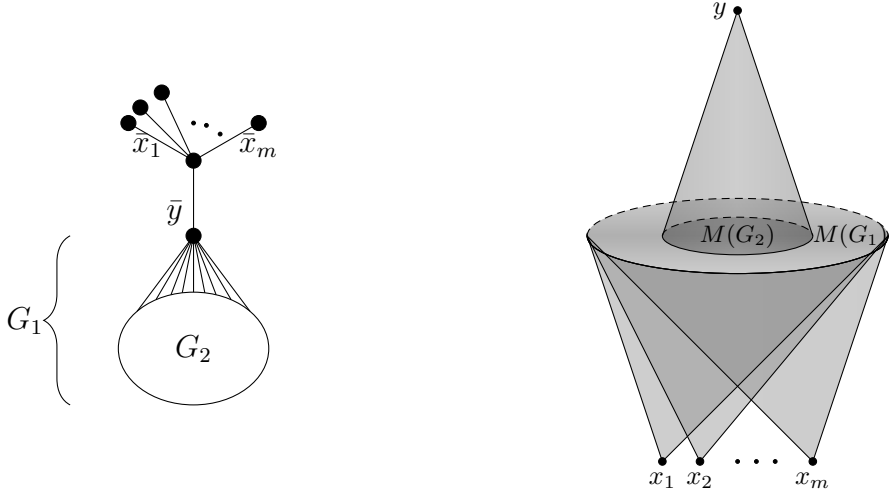
\begin{figure}[htb!] 
\begin{minipage}{.4\textwidth}
\begin{center}
\begin{tikzpicture}[scale = 1]
\filldraw (0,1.5) circle (0.1cm);
\filldraw (0,2.5) circle (0.1cm);

\filldraw (3^.5/2 , 2.5 + .5) circle (0.1cm);
\filldraw (-3^.5/2 , 2.5 + .5) circle (0.1cm);
\filldraw ( -0.4226,0.906 + 2.5) circle (0.1cm);
\filldraw (-2^.5/2 , 2.5 + 2^.5/2) circle (0.1cm);

\filldraw(0.3420/2, 0.939/2 + 2.5) circle (0.025cm);
\filldraw (2^.5/4 , 2.5 + 2^.5/4) circle (0.025cm);
\filldraw (0 , 3) circle (0.025cm);

\draw (0,2.5)--(3^.5/2 , 2.5 + .5) ;
\draw (0,2.5)--(-3^.5/2 , 2.5 + .5) ;
\draw (0,2.5)-- (-2^.5/2 , 2.5 + 2^.5/2) ;
\draw (0,2.5) -- ( -0.4226,0.906 + 2.5) ;
\draw (0, 1.5) -- (0, 2.5);
\draw (0, 1.5) -- (-.75, 0.496);
\draw (0, 1.5) -- (-.5, 0.6495);
\draw (0, 1.5) -- (-.3, 0.7155);
\draw (0, 1.5) -- (-.15, 0.7415);
\draw (0, 1.5) -- (0, 0.75);
\draw (0, 1.5) -- (.15, 0.7415);
\draw (0, 1.5) -- (.3, 0.7155);
\draw (0, 1.5) -- (.5, 0.6495);
\draw (0, 1.5) -- (.75, 0.496);

\node[left] at (0, 1.8) {$\bar{y}$};
\node[below] at (-3^.5/2 + .25 , 2.5 + .5 ) {$\bar{x}_1$};
\node[below] at (3^.5/2 , 2.5 + .5 ) {$\bar{x}_m$};

\node at (0, 0) {$G_2$};

\draw (0, 0) ellipse (1 and 0.75);

\draw [decorate,decoration={brace,amplitude=10pt},xshift=-4pt,yshift=0pt]
(-1.5,-0.75) -- (-1.5,1.5) node [black,midway,xshift=-0.6cm] 
{ $G_1$};
\end{tikzpicture}
\end{center}
\end{minipage}
\begin{minipage}{.55\textwidth}
\begin{center}
\begin{tikzpicture}
\fill[top color=gray!50!black,bottom color=gray!10,middle color=gray,shading=axis,opacity=0.25] (0,0) circle (1cm and 0.25cm); %SMALL CIRCLE
\fill[top color=gray!10,bottom color=gray!10,middle color=gray,shading=axis,opacity=0.25] (0,0) circle (2cm and 0.5cm); %BIG CIRCLE

\fill[left color=gray!50,right color=gray!50,middle color=gray!5,shading=axis,opacity=0.25] (1,0) -- (0,3) -- (-1,0) arc (180:360:1cm and 0.25cm); %TOP CONE

\fill[left color=gray!5,right color=gray!10,middle color=gray!5,shading=axis,opacity=0.25] (2,0) -- (-1,-3) -- (-2,0) arc (180:360:2cm and 0.5cm); %BOTTOM CONE
\fill[left color=gray!5,right color=gray!10,middle color=gray!5,shading=axis,opacity=0.25] (2,0) -- (-0.5,-3) -- (-2,0) arc (180:360:2cm and 0.5cm); %BOTTOM CONE
\fill[left color=gray!5,right color=gray!10,middle color=gray!5,shading=axis,opacity=0.25] (2,0) -- (1,-3) -- (-2,0) arc (180:360:2cm and 0.5cm); %BOTTOM CONE

\draw (-2,0) arc (180:360:2cm and 0.5cm) -- (-.5,-3) -- cycle; %outline of bottom cone
\draw (-2,0) arc (180:360:2cm and 0.5cm) -- (-1,-3) -- cycle; %outline of bottom cone
\draw (-2,0) arc (180:360:2cm and 0.5cm) -- (1,-3) -- cycle; %outline of bottom cone
\draw (-1,0) arc (180:360:1cm and 0.25cm) -- (0,3) -- cycle; %outline of top cone
\draw[densely dashed] (-2,0) arc (180:0:2cm and 0.5cm); %dashed edge of big circle
\draw[densely dashed] (-1,0) arc (180:1:1cm and 0.25cm); %dashed edge of small circle

\filldraw (0,3) circle (0.05cm);
\filldraw (1,-3) circle (0.05cm);
\filldraw (-1,-3) circle (0.05cm);
\filldraw (-.5,-3) circle (0.05cm);

\filldraw (0.5,-3) circle (0.025cm);
\filldraw (0.25,-3) circle (0.025cm);
\filldraw (0,-3) circle (0.025cm);

\node[left] at (0, 3) {\footnotesize{$y$}};
\node[below] at (-1, -3) {\footnotesize{$x_1$}};
\node[below] at (-.5, -3) {\footnotesize{$x_2$}};
\node[below] at (1, -3) {\footnotesize{$x_m$}};

\node at (0, 0) {\scriptsize{$M(G_2)$}};
\node at (1.5, 0) {\scriptsize{$M(G_1)$}};

\end{tikzpicture}

\end{center}
\end{minipage}
\caption{A graph $G$ with edges $\bar{x}_1, \ldots, \bar{x}_m$ and $\bar{y}$ that form an independent $m$-leg (shown left) and its matching complex $M(G)$ (shown right).}
%Notice that in this case, $M(G)$ is $\bigcup\limits_{M(G_1)}\left[\bigcup\limits_{x_i}(C^{x_i}(M(G_1)))\right] \bigcup\limits_{M(G_2) }C^y(M(G_2))$.}
\label{fig:mleg}
\end{figure}

%\hnote{I changed the Figure caption of Figure \ref{fig:mleg}. It used to say:``A graph $G$ with edges $\bar{x}_1, \ldots, \bar{x}_m$ and $\bar{y}$ that form an independent $m$-leg. Notice that in this case, $M(G)$ is $\bigcup\limits_{M(G_1)}\left[\bigcup\limits_{x_i}(C^{x_i}(M(G_1)))\right] \bigcup\limits_{M(G_2) }C^y(M(G_2))$''. We might consider removing the right hand side of the picture since we no longer prove the $m$-leg Lemma. I personally would like to keep it.}

\begin{figure}[h!]

\begin{center}
\begin{tikzpicture}[scale = .4]

%%%%T1
\filldraw (2, 0) circle (0.1cm);
\filldraw (0, 0) circle (0.1cm);
\filldraw (0, 2) circle (0.1cm);

\draw (2, 0) -- (0, 0) -- (0, 2);

\node[below] at (1, -1) {$T_1$};

%%%%T2
\filldraw (9, 0) circle (0.1cm);
\filldraw (7, 0) circle (0.1cm);
\filldraw (5, 0) circle (0.1cm);
\filldraw (7, 2) circle (0.1cm);
\filldraw (5, 2) circle (0.1cm);

\draw (9, 0) -- (7, 0) -- (5, 0) -- (5, 2);
\draw (7, 0) -- (7, 2);

\node[below] at (7, -1) {$T_2$};

%%%%T3
\filldraw (18, 0) circle (0.1cm);
\filldraw (16, 0) circle (0.1cm);
\filldraw (16, 2) circle (0.1cm);
\filldraw (14, 0) circle (0.1cm);
\filldraw (12, 0) circle (0.1cm);
\filldraw (14, 2) circle (0.1cm);
\filldraw (12, 2) circle (0.1cm);

\draw (18, 0) -- (16, 0) -- (14, 0) -- (12, 0) -- (12, 2);
\draw (14, 0) -- (14, 2);
\draw (16, 0) -- (16, 2);

\node[below] at (15, -1) {$T_3$};

\end{tikzpicture}

\end{center}

\caption{The family of graphs described in Example \ref{ex:onechild}.}
\label{fig:onechild}
\end{figure}
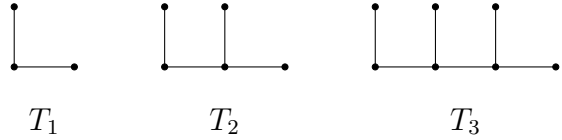

%%%%%%%%%%%%%%%%%%%%%%%%%%%%%%%%%%%%%%%%%

\section{Connectedness and Diameter} \label{diameter}
There are inherent restrictions on the types of matching complexes that arise when considering the structure of graphs. For example, Jonsson~\cite{Jakob} gave lower bounds on the depth of $M(G)$ under certain conditions, such as in the case where $G$ admits a perfect matching. More recently, Bayer, Goeckner, and Jeli\'c Milutinovi\'c  completely classified matching complexes that are combinatorial manifolds with and without boundaries~\cite{bestpaper}.  Proposition~\ref{prop:conn} characterizes when the matching complex is disconnected. 
%For an example, see Figure \ref{fig:easyex}. 

%%%%%%%%%%%%%%%%%%Removed%%%%%%%%%%%%%%%
\iffalse
\begin{figure}[ht]
\begin{center}
\begin{tikzpicture}[scale = .75]

%%%%THE GRAPH
\filldraw (-1, 1) circle (0.1cm);
\filldraw (-1, -1) circle (0.1cm);
\filldraw (2, 0) circle (0.1cm);
\filldraw (0, 0) circle (0.1cm);
\filldraw (3, 1) circle (0.1cm);
\filldraw (3, -1) circle (0.1cm);

\draw (-1, 1) -- (0, 0) -- (2, 0) --(3, 1);
\draw (2, 0) -- (3, -1);
\draw (-1, -1) -- (0, 0) ;

\node[below] at (1, -2) {$G$};

\node at (-0.35,0.75) {$\bar{a}$};
\node at (2.35,0.75) {$\bar{d}$};
\node at (-0.35,-.75) {$\bar{b}$};
\node at (1,0.25) {$\bar{c}$};
\node at (2.35,-0.75) {$\bar{e}$};

%%%%THE MATCHING COMPLEX

\filldraw (6, 0) circle (0.1cm);
\filldraw (7, 1) circle (0.1cm);
\filldraw (7, -1) circle (0.1cm);
\filldraw (8, 0) circle (0.1cm);
\filldraw (9.5, 0) circle (0.1cm);

\node at (5.7, 0) {$a$};
\node at (8.3, 0) {$b$};
\node at (7, 1.35) {$d$};
\node at (9.8, 0) {$c$};
\node at (7, -1.35) {$e$};

\draw (6, 0) -- (7, 1) -- (8, 0) --(7, -1) -- (6, 0);

\node[below] at (7, -2) {$M(G)$};

\end{tikzpicture}
\end{center}

\caption{On the left is a graph $G$ and on the right is its matching complex $M(G)$. Every edge in $G \setminus \{\overline{c}\}$ is incident to $\overline{c}$ and $M(G)$ is disconnected, which is consistent with Proposition~\ref{prop:conn}.}
\label{fig:easyex}
\end{figure}
\fi
%%%%%%%%%%%%%%%%%%%%%%%%%%%%%%%%%%%%%%%%%

\begin{prop}
\label{prop:conn}
Let $M(G)$ denote the matching complex of a graph $G$. 
$M(G)$ is disconnected if and only if there exists a nonempty set $S \subset E(G)$ such that $S \not= E(G)$, and every edge of $E(G) \setminus S$ is incident to every edge of $S$. 

\end{prop}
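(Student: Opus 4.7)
The plan is to reduce connectedness of the simplicial complex $M(G)$ to connectedness of its $1$-skeleton, which is a concrete graph I can identify explicitly. Recall that any simplicial complex is connected if and only if its $1$-skeleton is connected, since every face is path-connected to each of its vertices. Now the $1$-skeleton of $M(G)$ has vertex set $E(G)$, with $\bar e_1, \bar e_2 \in E(G)$ adjacent if and only if $\{\bar e_1, \bar e_2\}$ is a matching of $G$, that is, if and only if $\bar e_1$ and $\bar e_2$ are non-incident in $G$. Equivalently, this $1$-skeleton is the complement of the line graph $L(G)$.

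For the forward direction, suppose $M(G)$ is disconnected. Then so is its $1$-skeleton, so we may pick a connected component and let $S \subseteq E(G)$ be the set of edges of $G$ corresponding to its vertices. Since the $1$-skeleton has at least two components and every vertex lies in some component, $S$ is a nonempty proper subset of $E(G)$. For any $\bar s \in S$ and $\bar t \in E(G)\setminus S$, there is no edge of $M(G)$ between $s$ and $t$ (else they would be in the same component), so $\{\bar s, \bar t\}$ fails to be a matching of $G$; that is, $\bar s$ and $\bar t$ are incident.

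For the backward direction, suppose such an $S$ exists. Then for every $\bar s \in S$ and every $\bar t \in E(G)\setminus S$ the edges $\bar s, \bar t$ share a vertex in $G$, so $\{s, t\}$ is not a face of $M(G)$. Hence the $1$-skeleton of $M(G)$ has no edge crossing the partition $E(G) = S \sqcup (E(G)\setminus S)$ into two nonempty parts, so it is disconnected, and therefore $M(G)$ is disconnected.

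There is no real obstacle here: once the $1$-skeleton of $M(G)$ is identified with $\overline{L(G)}$, the statement reduces to the standard fact that a graph is disconnected exactly when its vertex set admits a nontrivial partition with no crossing edges. The only thing to be slightly careful about is verifying that $S$ and $E(G)\setminus S$ are both nonempty in the forward direction, which is automatic since disconnectedness of $M(G)$ forces $|E(G)| \ge 2$ and every vertex of the $1$-skeleton is assigned to some component.
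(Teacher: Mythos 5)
Your proof is correct, and your forward direction takes a genuinely different route from the paper's. Both arguments handle $(\Leftarrow)$ the same way: the hypothesized $S$ prevents any matching (equivalently, any edge of the $1$-skeleton) from crossing the partition, so $M(G)$ is disconnected. For $(\Rightarrow)$, you argue directly: identify the $1$-skeleton of $M(G)$ with the complement of $L(G)$, take $S$ to be the vertex set of one connected component, and observe that the absence of crossing edges of the $1$-skeleton means every edge of $S$ is incident in $G$ to every edge of $E(G)\setminus S$. The paper instead proves the contrapositive: assuming no such $S$ exists, it constructs, for any two vertices $e_1$ and $e_5$ of $M(G)$, an explicit path of length at most $4$, via casework on auxiliary edges $\overline{e}_2,\overline{e}_3,\overline{e}_4$ and the $4$-cycle they may form with $\overline{e}_1,\overline{e}_5$. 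Your argument is shorter and more transparent, reducing everything to the standard fact that a graph is disconnected exactly when its vertices admit a nontrivial partition with no crossing edges. What the paper's longer construction buys is quantitative information: the explicit paths of length at most $4$ immediately give the corollary that every connected matching complex has $1$-skeleton of diameter at most $4$, which your component-based argument does not yield.
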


\begin{proof}

$(\Leftarrow)$ Assume there is a set $\emptyset \subsetneq S \subsetneq E(G)$ such that every edge of $E(G) \setminus S$ is incident to every edge of $S$. Then there cannot be a matching containing edges of $S$ and edges of $E(G) \setminus S$ and thus the matching complex is disconnected.

($\Rightarrow$) Assume that there does not exist a set $\emptyset \subsetneq S \subsetneq E(G)$ such that every edge of $E(G) \setminus S$ is incident to every edge of $S$. Let $\overline{e}_1, \overline{e}_5 \in E(G)$. We need to show that there is a path between $e_1$ and $e_5$ in the matching complex. 

By assumption, there is an edge $\overline e_2$ not incident to $\overline e_1$ and an edge $\overline e_4$ not incident to $\overline e_5$. In order to prevent a path from $e_1$ to $e_5$  in the matching complex, these four edges must be distinct and form a cycle. 
Furthermore, by assumption, $\overline e_1$ and $\overline e_2$ cannot both be incident to every edge of $G$. The same statement holds for $\overline e_4$ and $\overline e_5$. 
Thus, there must be an edge $\overline e_3$ that is incident to at most one of the 4 vertices formed by the cycle $\overline{e}_1,\overline{e}_5,\overline{e}_2,\overline{e}_4$.
Regardless of which vertex $\overline{e_3}$ is incident to, there is a path from $e_1$ to $e_5$ in the matching complex.

\end{proof}

The \emph{diameter} of a graph is the maximum over the minimum distance between any two vertices. The diameter of a complex is the diameter of its 1-skeleton. 
%\purplesout{In the proof of Proposition~\ref{prop:conn}, the longest possible path between $e_1$ and $e_5$ was length 4. The following corollary follows immediately from this observation. }
The following corollary is immediate from the proof of Proposition~\ref{prop:conn}.

\begin{corollary} There is a path of length 
less than or equal to 4 between any two vertices of the 1-skeleton of any connected matching complex. That is, any connected matching complex has diameter less than or equal to 4.
\end{corollary}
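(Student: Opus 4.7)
The plan is to observe that the proof of Proposition~\ref{prop:conn} is already constructive: given any two vertices $e_1$ and $e_5$ in a connected matching complex $M(G)$, the $(\Rightarrow)$ argument exhibits an explicit path between them in the $1$-skeleton whose length is at most $4$. So no new argument is needed; the corollary is a matter of reading off this bound from the case analysis.

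First I would dispose of the trivial case: if $\overline{e}_1$ and $\overline{e}_5$ are non-incident edges of $G$, then $\{\overline{e}_1, \overline{e}_5\}$ is a matching of size $2$ and so yields an edge in $M(G)$ between $e_1$ and $e_5$, a path of length $1$. Otherwise, by connectedness of $M(G)$ and Proposition~\ref{prop:conn}, no proper nonempty subset $S \subsetneq E(G)$ has the property that every edge outside $S$ is incident to every edge of $S$, so the hypotheses driving the $(\Rightarrow)$ construction are in force.

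In that construction one introduces an edge $\overline{e}_2$ non-incident to $\overline{e}_1$, an edge $\overline{e}_4$ non-incident to $\overline{e}_5$, and a fourth edge $\overline{e}_3$ witnessing the failure of the incidence condition applied either to $\{\overline{e}_1,\overline{e}_2\}$ or to $\{\overline{e}_4,\overline{e}_5\}$. Depending on which vertex (if any) of the $4$-cycle formed by $\overline{e}_1,\overline{e}_5,\overline{e}_2,\overline{e}_4$ the edge $\overline{e}_3$ meets, the path exhibited between $e_1$ and $e_5$ is one of
\[
\{e_1,e_2,e_5\},\ \{e_1,e_4,e_5\},\ \{e_1,e_2,e_4,e_5\},\ \{e_1,e_3,e_5\},\ \{e_1,e_3,e_4,e_5\},\ \{e_1,e_2,e_3,e_5\},\ \{e_1,e_2,e_3,e_4,e_5\},
\]
each of which has length at most $4$. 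Since these cases are exhaustive, the diameter of the $1$-skeleton of $M(G)$ is at most $4$.

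There is essentially no obstacle beyond careful bookkeeping; the only point worth emphasizing is that the worst case (length exactly $4$) is realized by the path $\{e_1,e_2,e_3,e_4,e_5\}$ appearing in the first branch of the final case analysis in Proposition~\ref{prop:conn}, so the bound $4$ is not improvable by this argument. I would therefore present the corollary proof as a one-line appeal to the case analysis of Proposition~\ref{prop:conn}, confirming that every path produced there has length at most $4$.
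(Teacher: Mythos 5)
Your proposal is correct and matches the paper's own argument: the paper likewise deduces the corollary immediately from the observation that the longest path produced in the case analysis of Proposition~\ref{prop:conn} has length $4$. Your added bookkeeping (listing the explicit paths and invoking connectedness to rule out a separating set $S$) is just a more detailed write-up of the same reasoning.
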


This bound is achievable.  Figure \ref{diam4} shows a graph whose matching complex is a path on five vertices.

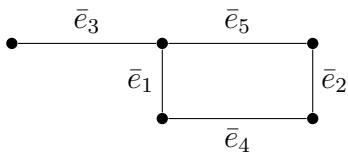
\begin{figure}
\begin{center}
\begin{tikzpicture}
    
    \tikzstyle{every node} = [circle,fill,inner sep=1pt,minimum size = 1.5mm]
    \node (a) at (4,5) {};
    \node(b) at (6,5){} ;
    \node(c) at (6,6){};
    \node(d) at (4,6){};
    \node(e) at (2,6){};

    \draw (a)--(b) node [midway, below, fill = none] {$\bar{e}_4$};
    \draw (b)--(c) node [midway, right, fill = none] {$\bar{e}_2$};
    \draw (c)--(d) node [midway, above, fill = none] {$\bar{e}_5$};
    \draw (d)--(a) node [midway, left, fill = none] {$\bar{e}_1$};
    \draw (d)--(e) node [midway, above, fill = none] {$\bar{e}_3$};
    
\end{tikzpicture}
\caption{The matching complex of the graph shown above is $P_5$, which has diameter 4.} 
\label{diam4}
\end{center}
\end{figure}

Furthermore, the matching complexes of a large class of graphs have  diameter 2.

%Furthermore, a large class of graphs have matching complexes whose 1-skeletons have diameter 2.

\begin{prop} If $G$ has at least two incident edges, but no pair $\ov{e},\ov{e}'$ such that every edge in $E(G)\setminus\{\ov{e},\ov{e}'\}$ is incident to either $\ov{e}$ or $\ov{e}'$, then the matching complex of $G$ has diameter 2.
\end{prop}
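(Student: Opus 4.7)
The plan is to verify the diameter bound vertex-by-vertex, exploiting the hypothesis as a direct witness that a common non-neighbor always exists.

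First, I would parse the hypothesis. Its contrapositive says: for every pair of edges $\bar{e}, \bar{e}' \in E(G)$, there exists a third edge $\bar{e}'' \in E(G) \setminus \{\bar{e}, \bar{e}'\}$ that is \emph{neither} incident to $\bar{e}$ \emph{nor} incident to $\bar{e}'$. This is the only real input we need.

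Next, pick two arbitrary vertices $e_1, e_2$ in $M(G)$, corresponding to edges $\bar{e}_1, \bar{e}_2 \in E(G)$. There are two cases. If $\bar{e}_1$ and $\bar{e}_2$ are not incident in $G$, then $\{\bar{e}_1, \bar{e}_2\}$ is a matching, so $e_1$ and $e_2$ are adjacent in $M(G)$, giving distance $1$. If instead $\bar{e}_1$ and $\bar{e}_2$ are incident, apply the hypothesis (in its contrapositive form) to the pair $\{\bar{e}_1, \bar{e}_2\}$ to obtain $\bar{e}_3 \in E(G) \setminus \{\bar{e}_1, \bar{e}_2\}$ incident to neither. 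Then $\{\bar{e}_1, \bar{e}_3\}$ and $\{\bar{e}_2, \bar{e}_3\}$ are both matchings, so $e_1$–$e_3$–$e_2$ is a path of length $2$ in the $1$-skeleton of $M(G)$. In either case the distance is at most $2$.

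To see that the diameter is exactly $2$ and not less, use the other hypothesis: $G$ has two incident edges, say $\bar{e}$ and $\bar{e}'$. Then $e$ and $e'$ are not adjacent in $M(G)$ (since incident edges do not form a matching), so their distance is at least $2$. Combined with the upper bound, the diameter of the $1$-skeleton equals $2$.

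I do not anticipate any real obstacle here; the whole content is unpacking the hypothesis correctly (noting that ``not (incident to either $\bar{e}$ or $\bar{e}'$)'' means incident to neither), after which the required edge $\bar{e}_3$ is delivered for free. The only subtle point worth mentioning is that one should not invoke Proposition~\ref{prop:conn} to separately establish connectedness — the length-$2$ path constructed above already shows that $M(G)$ is connected, so the diameter is well-defined.
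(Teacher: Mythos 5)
Your proof is correct and follows essentially the same route as the paper: use the hypothesis to produce, for any two incident edges, a third edge incident to neither (giving a path $e_1$--$e_3$--$e_2$ of length $2$), and use the existence of two incident edges for the lower bound. The only cosmetic difference is your explicit case split when $\overline{e}_1,\overline{e}_2$ are already disjoint, which the paper subsumes by always routing through a common non-neighbor.
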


\begin{proof}
For two incident edges $\ov{h_1}$ and $\ov{h_2}$ in $G$ the distance between the two corresponding vertices $h_1$ and $h_2$ on the 1-skeleton of $M(G)$ must be at least 2. Therefore, the diameter is at least 2. Furthermore, for any pair of edges $\ov{e_1}$ and $\ov{e_2}$ in $G$ there exists a third edge $\ov{g}$ that is not incident to either $\ov{e_1}$ or $\ov{e_2}$, so $\{e_1,g, e_2\}$ is a path in the 1-skeleton of $M(G)$ from $e_1$ to $e_2$. %corresponding to the edges in $G$. 
Hence, $M(G)$ has diameter 2.  

\end{proof}

%\section{A Class of Contractible Matching Complexes} \label{contractible}

\iffalse
\begin{theorem} Let $G$ be any graph. If we add two vertices and an edge $e$ between them, the matching complex of $G \cup \{e\}$ is contractible. In other words, the matching complex of any graph that contains an isolated edge is contractible.  
\end{theorem}
\begin{proof}
If a graph contains an isolated edge, its line graph contains an isolated vertex. This vertex is a free vertex when applying the Matching Tree Algorithm.
\end{proof}

 \fi
 
\section{Polygon Tilings} \label{polygon}

\begin{figure}[!htb] % magic from the internet to put the figure where it goes
\begin{center}
\begin{tikzpicture}[scale = .85]
\filldraw (0.5,0) circle (0.1cm); 
\filldraw (0,0.5) circle (0.1cm); 
\filldraw (0,1.5) circle (0.1cm); 
\filldraw (0.5,2) circle (0.1cm); 

\filldraw (1.5,2) circle (0.1cm); 
\filldraw (2,1.5) circle (0.1cm);
\filldraw (2,0.5) circle (0.1cm); 
\filldraw (1.5,0) circle (0.1cm); 
\filldraw (2.5,2) circle (0.1cm); 
\filldraw (2.5,0) circle (0.1cm);

\filldraw (3.5,2) circle (0.1cm); 
\filldraw (4,1.5) circle (0.1cm);
\filldraw (4,0.5) circle (0.1cm); 
\filldraw (3.5,0) circle (0.1cm); 
\filldraw (4.5,2) circle (0.1cm); 
\filldraw (4.5,0) circle (0.1cm);

\filldraw (5.75,1) circle (0.025cm); 
\filldraw (6,1) circle (0.025cm); 
\filldraw (6.25,1) circle (0.025cm);

\filldraw (8,1.5) circle (0.1cm);
\filldraw (8,0.5) circle (0.1cm); 
\filldraw (7.5,2) circle (0.1cm); 
\filldraw (7.5,0) circle (0.1cm);

\draw[thick] (0.5, 0) -- (0,0.5)-- (0,1.5)-- (0.5,2);
\draw[thick] [dashed](0.5,2) -- (1.5,2);
\draw[thick] [dashed](0.5,0) -- (1.5,0);
\draw[thick] (1.5, 2) -- (2,1.5) -- (2,0.5) -- (1.5,0);
\draw[thick] (2.5, 0) -- (2,0.5)-- (2,1.5)-- (2.5,2);
\draw[thick] [dashed](2.5,2) -- (3.5,2);
\draw[thick] [dashed](2.5,0) -- (3.5,0);
\draw[thick] (3.5, 2) -- (4,1.5) -- (4,0.5) -- (3.5,0);
\draw[thick] (4.5, 0) -- (4,0.5)-- (4,1.5)-- (4.5,2);
\draw[thick] [dashed](4.5,2) -- (5.5,2);
\draw[thick] [dashed](4.5,0) -- (5.5,0);
\draw[thick] [dashed](6.5,2) -- (7.5,2);
\draw[thick] [dashed](6.5,0) -- (7.5,0);
\draw[thick] (7.5, 2) -- (8,1.5) -- (8,0.5) -- (7.5,0);

\end{tikzpicture}
\caption{A graph of $t$ $2n$-gons.}
\label{original}
\end{center}
\end{figure}
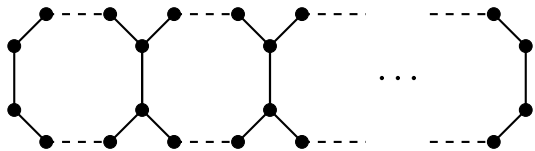

In~\cite{Jakob}, Jonsson proposes studying the topology of matching complexes of honeycomb graphs. In this section we will consider polygonal tilings and $2 \times 1 \times t$ honeycombs for $t \geq 1$, providing partial results for the matching complexes of honeycomb graphs. 

Consider a graph of $t$ $2n$-gons for $n \ge 2$ and $t \geq 1$ arranged in a line as shown in Figure \ref{original}. 
Jonsson explored the matching complex of this graph when $n=2$, so the graph is a line of quadrilaterals \cite{JonssonMatchingGrids}, showing that the matching complex is
at least $d^{\min}_{t}$-connected, where 
\[d^{\min}_{t} = 
\begin{cases}
2\left\lfloor \dfrac{t-2}{3}\right\rfloor + 2  & \text{ if } t \equiv 0~(\bmod ~3)\\
\left\lfloor \dfrac{2t}{3} \right\rfloor & \text{ otherwise. } % t \equiv 1 \text{ or } t \equiv 2~(\bmod ~3) \\
\end{cases}
\]
A topological space is {\em $k$-connected} if the higher homotopy groups $\pi_\ell$ vanish for all dimensions $\ell \leq k$.
Braun and Hough \cite{Braun_Hough} gave a proof of this result using the Matching Tree Algorithm. They also showed that the matching complex of $t$ 4-gons has no $d$-dimensional cells for $d > d_{t}^{\max}$, where
\[d^{\max}_{t} = \left\lfloor \dfrac{3t-1}{4} \right\rfloor. 
\]
Matsushita \cite{Matsushita} concluded the study of the matching complex of $t$ 4-gons by determining the homotopy type.

The following theorem extends these results by considering when $n > 2$. Note that the theorem only considers when $t > 1$, since the homotopy type of matching complexes of cycle graphs are known.

\begin {theorem} 
\label{2ngons}

Let $t>1$ and $n > 2$. 
Define

\[d^{\min} = 
\begin{cases}
\dfrac{2nt}{3}-t  & \text{ if } n \equiv 0 ~(\bmod ~3) \\
\dfrac {2nt+t}3-t & \text{ if } n \equiv 1 ~(\bmod ~3) \\
\dfrac{2nt-t}{3}- \left\lfloor \dfrac {t+1}2 \right\rfloor & \text{ if } n \equiv 2 ~(\bmod ~3)
\end{cases}
\]
and
\[d^{\max} = 
\begin{cases}
\dfrac{2nt}{3}-\left\lfloor \dfrac{t}{2} \right\rfloor -1  & \text{ if } n \equiv 0 ~(\bmod ~3) \\
\dfrac {2nt+t}3-t  & \text{ if } n \equiv 1 ~(\bmod ~3) \\
\dfrac{2nt-t}{3} - 1 & \text{ if } n \equiv 2 ~(\bmod ~3)
\end{cases}.
\]

The matching complex of a graph of $t$ $2n$-gons is homotopy equivalent to a space with no $d$-dimensional cells, where $0 < d < d^{\min}$ or $d > d^{\max}$. Consequently, the connectivity is at least $d^{\min} -1$.
\end{theorem}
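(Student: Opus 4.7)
The plan is to apply the Matching Tree Algorithm (MTA) to the line graph $L(G)$ of a line of $t$ $2n$-gons, since $M(G) = \mathrm{Ind}(L(G))$. I will design a single sequence of splitting-vertex choices so that every non-empty critical cell $A$ of the resulting matching tree satisfies $d^{\min}+1 \le |A| \le d^{\max}+1$. By Theorem~\ref{thm:main_mta}, this makes $M(G)$ homotopy equivalent to a CW complex with cells only in dimension $0$ and in dimensions $d^{\min}$ through $d^{\max}$, which proves both the claim about no $d$-dimensional cells and the connectivity claim (a CW complex with one $0$-cell and no cells in dimensions $1,\ldots,d^{\min}-1$ is $(d^{\min}-1)$-connected).

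The splitting strategy proceeds polygon-by-polygon, from $P_1$ to $P_t$. Within each polygon $P_i$, the edges form a $2n$-cycle in $L(G)$, and I would choose splitting vertices spaced approximately three edges apart around the cycle, letting pivot and matching steps resolve the remaining edges. The three cases $n \equiv 0, 1, 2 \pmod 3$ reflect how well a $2n$-cycle partitions into triples of consecutive edges: when $n \equiv 1 \pmod 3$ the partition is exact, which is why the formulas give $d^{\min} = d^{\max}$ in that case; in the other residues, leftover edges must be handled with extra splits, accounting for the floor corrections in the formulas. By choosing splitting vertices and pivots consistently across polygons, each $P_i$ should contribute a controlled number of elements to $A$, summing to the desired range.

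For the lower bound $d^{\min}$, my plan is to invoke Lemma~\ref{lem:buzzerbeater}. I would identify a set of $d^{\min}+1$ vertices of $L(G)$ whose closed neighborhoods are pairwise disjoint (equivalently, an induced matching in $G$ of that size) and ensure that none of these is used as a splitting vertex. The lemma then forces $|A| \ge d^{\min}+1$ for every critical cell. The formulas for $d^{\min}$ correspond precisely to the maximum size of such a packing in a line of $2n$-gons, which scales like $2nt/3$ with corrections from the boundary conditions imposed by the shared vertices between consecutive polygons. For the upper bound $d^{\max}$, I would instead directly track the size of $A$ along the chosen branch of the matching tree and show that, for the specific splitting sequence, enough vertices are pushed into $B$ (via splits or as neighbors in the pivot step) to force $|A| \le d^{\max}+1$.

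The hardest part will be the interface between consecutive polygons: since adjacent polygons share vertices in $G$, the choice of splitting vertices in $P_i$ affects which vertices are still available in $P_{i+1}$, and the pivot step may or may not already have absorbed the shared edges into $A \cup B$. This forces a case analysis on $n \bmod 3$ together with the parity of $t$, which is what produces the $\lfloor t/2 \rfloor$ and $\lfloor (t+1)/2 \rfloor$ terms in the formulas. Once the splitting pattern is specified piecewise (with a distinct prescription for the first polygon, generic interior polygons, and the last polygon), verifying the critical-cell size bounds becomes a careful book-keeping exercise, tallying contributions from each polygon and the shared boundaries, and invoking Lemma~\ref{split} to justify that the order in which pivot/matching steps are applied within a single split-preparation does not affect the sizes of the resulting critical cells.
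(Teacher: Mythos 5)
Your overall framework (running the Matching Tree Algorithm on the line graph and bounding the sizes of critical cells) is the same as the paper's, but your mechanism for the lower bound does not work. You propose to get $|A|\ge d^{\min}+1$ from Lemma~\ref{lem:buzzerbeater} by exhibiting $d^{\min}+1$ vertices of $L(G)$ with pairwise disjoint closed neighborhoods (an induced matching of $G$), claiming the formulas for $d^{\min}$ equal the maximum size of such a packing. That claim fails already for $n\equiv 1\pmod 3$: take $n=4$, $t=2$ (two octagons sharing an edge), where $d^{\min}+1=\frac{2nt+t}{3}-t+1=5$. Here $L(G)$ has $15$ vertices, every vertex has degree $2$ except the shared vertex $a_2$ of degree $4$, so five pairwise disjoint closed neighborhoods would have to consist of five degree-$2$ vertices whose closed neighborhoods partition all $15$ vertices; checking the three possible ways to cover $a_1$ (choosing $a_1$, $b_{(1,1)}$, or $c_{(1,1)}$) shows each leads to a vertex of the first octagon that cannot be covered without overlap, so the maximum packing has size $4$. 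Hence Lemma~\ref{lem:buzzerbeater} can only give connectivity $\ge 2$ there, while the theorem asserts $\ge d^{\min}-1=3$ (and indeed $d^{\min}=d^{\max}$ in this residue, so every critical cell has exactly $5$ elements). The paper does not use a packing argument for this theorem at all: it fixes the splitting vertices to be exactly the shared vertices $a_j$, and obtains \emph{both} $d^{\min}$ and $d^{\max}$ from one exhaustive analysis of every branch, encoding each critical cell as a string in the blocks $E_j,F_j,G_j,H_j$ and counting exactly how many vertices each block contributes.

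Two further cautions about the rest of your plan. First, your proposed splitting scheme (splitting roughly every three edges inside each polygon) would itself endanger the lower bound even where large packings exist: each split has a left child in which the splitting vertex goes into $B$ contributing nothing to $A$, so many splits per polygon tend to produce small critical cells; the paper's choice of a single split per polygon interface, with everything else handled by pivots, is what forces every branch to add many vertices to $A$. Second, your upper-bound argument is deferred to ``book-keeping''; that book-keeping is essentially the entire content of the paper's proof (the $E/F/G/H$ case analysis over $n\bmod 3$ and left/right children), so as written the proposal is a plan rather than a proof, and its one concrete new ingredient (the packing lower bound) is the part that provably cannot deliver the stated bound.
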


\begin{remark}
 In the case where $n \equiv 1 ~(\bmod ~3)$, Theorem~\ref{2ngons} determines the homotopy type of the matching complex of $t$ $2n$-gons (see 
Corollary~\ref{cor:nequiv1}).
After our work was first posted on the arXiv, Matsushita \cite{MatPolygonal}
 provided an alternative proof of this result and additionally determined the homotopy type of the matching complex of $t$ $2n$-gons recursively when $n \equiv 0$ and $n \equiv 2 ~(\bmod ~3)$ . His results show that the quantity $d^{\min}$ in Theorem~\ref{2ngons} is tight when $n \equiv 0$ but not when  $n \equiv 2$ (see Theorems 3.8 and 3.16 of \cite{MatPolygonal}). Matsushita's methods are different from ours. While we rely on the Matching Tree Algorithm, he determines the homotopy type inductively by cleverly deleting vertices from the line graph. A benefit of our proof using the Matching Tree Algorithm is its preparation for the similar but more technical proof of Theorem \ref{2by1bym}. 
\end{remark}

\begin{proof}

We will use the Matching Tree Algorithm to establish both bounds. Since this algorithm finds matchings on the face poset of the independence complex of a graph, 
we apply the algorithm to the line graph, which we label as shown in Figure \ref{fig:case1}, when $j=1$.

Recall by Lemma~\ref{split} and Remark~\ref{rem:splitorder} that the size of the critical cells of our matching tree depends only on the order that we choose splitting vertices and not how we split prepare. We apply the Matching Tree Algorithm using the following procedure.

For each split ready leaf node $\Sigma(A, B)$:

\begin{itemize}
\item[] Step 1: Choose the smallest $a_j$ not yet assigned to $A$ or $B$ as our splitting vertex. This produces two leaves, $\Sigma(A, B \cup \{a_j\})$ and $\Sigma(A \cup \{a_j\}, B \cup N(a_j))$.
\item[] Step 2: Split prepare each leaf. 
\end{itemize}

At Step 1 of this algorithm, there will be two possible cases for $V(G) \setminus (A \cup B)$, as seen in Figure~\ref{fig:case1} and Figure~\ref{fig:case2}. 

We split prepare based on
(i) the case, (ii) whether the leaf is the left or right child of $\Sigma(A, B)$, and (iii) the value of $n$ (mod 3). 

The follow sets of $V(G)$ make it easier to describe which vertices are added to $A$ when split preparing and will aid in the analysis of the critical cells.

\begin{itemize}
\item[]$E_j = \{ a_j \text{ as well as } b_{(j,k)} \text{ and } c_{(j,k)}\text{ for all } k \equiv 0 \pmod{3} \}$
\item[]$F_j = \{ b_{(j,k)} \text{ and } c_{(j,k)}\text{ for all } k \equiv 1 \pmod{3} \}$
\item[]$G_j = \{ b_{(j,k)} \text{ and } c_{(j,k)}\text{ for all } k \equiv 2 \pmod{3} \}$
\item[]$H_j = \{ b_{(j,k)} \text{ and } c_{(j,k)}\text{ for all } k \equiv 0 \pmod{3} \}$
\end{itemize}

We begin with Case 1 (see Figure~\ref{fig:case1}).\\

\noindent {\bf Case 1} (deg($a_j$) = 2):

\begin{figure}[!htb] % magic from the internet to put the figure where it goes
\begin{center}
\begin{tikzpicture}[scale = .85]
\filldraw (1,0) circle (0.1cm); 
\filldraw (0,1) circle (0.1cm); 
\filldraw (1,2) circle (0.1cm); 
\filldraw (2.4,0) circle (0.1cm); 
\filldraw (2.4,2) circle (0.1cm); 
\filldraw (3.4,1) circle (0.1cm);
\filldraw (4.4,2) circle (0.1cm); 
\filldraw (4.4,0) circle (0.1cm); 
\filldraw (5.8,2) circle (0.1cm); 
\filldraw (5.8,0) circle (0.1cm);
\filldraw (6.8,1) circle (0.1cm);  
\filldraw (7.8,2) circle (0.1cm);
\filldraw (7.8,0) circle (0.1cm);

\filldraw (9.2,1) circle (0.025cm); 
\filldraw (9.4,1) circle (0.025cm); 
\filldraw (9.6,1) circle (0.025cm); 

\filldraw (11,2) circle (0.1cm); 
\filldraw (11,0) circle (0.1cm);
\filldraw (12,1) circle (0.1cm);  
\filldraw (13,2) circle (0.1cm);
\filldraw (13,0) circle (0.1cm); 
\filldraw (15.4,1) circle (0.1cm);  
\filldraw (14.4,2) circle (0.1cm);
\filldraw (14.4,0) circle (0.1cm); 

\node at  (-.4,1) {$a_j$};
\node at  (2.8,1) {$a_{j+1}$};
\node at  (1,2.5) {$b_{(j,1)}$};
\node at  (2.4,2.5) {$b_{(j,n-1)}$};
\node at  (4.4,2.5) {$b_{(j+1,1)}$};
\node at  (1,-.5) {$c_{(j,1)}$};
\node at  (2.4,-.5) {$c_{(j,n-1)}$};
\node at  (4.4,-.5) {$c_{(j+1,1)}$};

\node at  (11.55, 1) {$a_{t}$};
\node at  (14.6, 1) {$a_{t+1}$};
\node at  (13,2.5) {$b_{(t,1)}$};
\node at  (15,2.5) {$b_{(t,n-1)}$};
\node at  (13,-.5) {$c_{(t,1)}$};
\node at  (15,-.5) {$c_{(t,n-1)}$};

\draw[thick] (1, 0) --(0, 1)--(1,2);
\draw[thick][dashed] (1,0) -- (2.4, 0);
\draw[thick][dashed] (1,2) -- (2.4, 2);
\draw[thick] (2.4, 0) -- (3.4,1)-- (2.4,2);
\draw[thick] (2.4,0) -- (4.4, 0) -- (3.4,1)-- (4.4,2) -- (2.4,2);
\draw[thick][dashed] (4.4,0) -- (5.8, 0);
\draw[thick][dashed] (4.4,2) -- (5.8, 2);
\draw[thick] (5.8, 0) -- (6.8,1)-- (5.8,2);
\draw[thick] (5.8,0) -- (7.8, 0) -- (6.8,1)-- (7.8,2) -- (5.8,2);
\draw[thick][dashed] (7.8,0) -- (8.8, 0);
\draw[thick][dashed] (7.8,2) -- (8.8, 2);

\draw[thick][dashed] (10,0) -- (11, 0);
\draw[thick][dashed] (10,2) -- (11, 2);
\draw[thick] (11, 0) -- (12,1)-- (11,2);
\draw[thick] (11,0) -- (13, 0) -- (12,1)-- (13,2) -- (11,2);
\draw[thick][dashed] (13,0) -- (14.4, 0);
\draw[thick][dashed] (13,2) -- (14.4, 2);
\draw[thick] (14.4, 0) -- (15.4,1)-- (14.4,2);

\end{tikzpicture}
\caption{$G \setminus (A \cup B)$ in Case 1}
\label{fig:case1}
\end{center}
\end{figure}
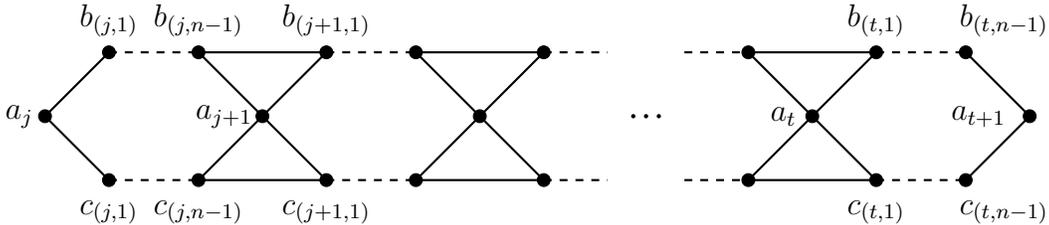

\begin{itemize}
\item[] \underline{Left child}: The left child of $\Sigma(A, B)$ is $\Sigma(A, B \cup \{a_j \})$. Since  $b_{(j,1)}$ and $c_{(j,1)}$ are pivots we add $b_{(j,2)}$ and $c_{(j,2)}$ to $A$ and the neighbors of $b_{(j,2)}$ and $c_{(j,2)}$ to $B$. Continuing to split prepare, we add the remaining vertices of $G_j$ to $A$ and $N(G_j)$ to $B$. 

\begin{itemize}
\item If $n \equiv 2 \pmod{3}$, the largest $k$ for which $\{b_{(j,k)},c_{(j,k)}\} \subseteq G_j$ is $n-3$ and
the last vertices added to $B$ are $b_{(j,n-2)},c_{(j,n-2)}$.
When $j = t$, $b_{(j,n-1)}$ and $c_{(j,n-1)}$ are pivots with $a_{t+1}$ as the associated matching vertex, so we add $a_{t+1}$ to $A$ and produce a critical cell. When $j \neq t$, we go to Case 2 after increasing $j$ by one. 

\item If $n \equiv 1 \pmod{3}$, the largest $k$ for which $\{b_{(j,k)},c_{(j,k)}\} \subseteq G_j$ is $n-2$ and
the last vertices added to $B$ are $b_{(j,n-1)},c_{(j,n-1)}$.
When $j = t$, $a_{t+1}$ has no neighbors in $G \setminus(A \cup B)$, so we do not get a critical cell. When $j \neq t$, we go to Case 1 after increasing $j$ by one.

\item If $n \equiv 0 \pmod{3}$, the largest $k$ for which $\{b_{(j,k)},c_{(j,k)}\} \subseteq G_j$ is $n-1$. When $j=t$, $a_{t+1}$ is added to $B$ and we have a critical cell. When $j \neq t$, unlike in the other cases, there are more pivots before we reach a split ready leaf. In this case, $a_{j+1}$, $b_{(j+1,1)}$, and $c_{(j+1,1)}$ are all added to $B$, and the situation is analogous to the right child when $a_{j+1}$ is used as a splitting vertex except $a_{j+1}$ is in $B$ rather than $A$. In particular, instead of adding $E_{j+1}$ to $A$, we add $H_{j+1}$ and then go to Case 2 after increasing $j$ by one.

\end{itemize}

\item[]\underline{Right child}: The right child of $\Sigma(A, B)$ is $\Sigma(A \cup \{ a _j\}, B \cup N(a_j))$. We have a situation similar to the left child, but shifted by one. We add $E_{j}$ to $A$ and $N(E_j)$ to $B$. 
\begin{itemize}
\item If $n \equiv 2 \pmod{3}$, the largest $k$ for which $\{b_{(j,k)},c_{(j,k)}\} \subseteq E_j$ is $n-2$. As in the $n \equiv 1 \pmod{3}$ case for the left child, when $j = t$, $a_{t+1}$ has no neighbors in $G \setminus(A \cup B)$, so we do not get a critical cell. When $j \neq t$, we go to Case 1 after increasing $j$ by one.
\item If $n \equiv 1 \pmod{3}$, the largest $k$ for which $\{b_{(j,k)},c_{(j,k)}\} \subseteq E_j$ is $n-1$. As in the $n \equiv 0 \pmod{3}$ case for the left child, when $j=t$, $a_{t+1}$ is added to $B$ and we have a critical cell. When $j \neq t$, $a_{j+1}, b_{(j+1, 1)}$ and $c_{(j+1, 1)}$ are all added to $B$, so there are more pivots before we reach a split ready leaf. Continuing to split prepare, we add $H_{j+1}$ to $A$ and $N(H_{j+1})$ to $B$. Then we add $H_{j+2}$ to $A$ and $N(H_{j+2})$ to $B$. This continues until we add $H_{t}$ to $A$ and $N(H_t)$ to $B$.
\item If $n \equiv 0 \pmod{3}$, the largest $k$ for which $\{b_{(j,k)},c_{(j,k)}\} \subseteq E_j$ is $n-3$ (unless $n=3$, in which case $E_j = \{a_j\}$). As in the $n \equiv 2 \pmod{3}$ case for the left child, when $j = t$, $b_{(j,n-1)}$ and $c_{(j,n-1)}$ are pivots with $a_{t+1}$ as the associated matching vertex, so we add $a_{t+1}$ to $A$ and produce a critical cell. When $j \neq t$, we go to Case 2 after increasing $j$ by one. \\

\end{itemize}
\end{itemize}

%%%%%%%%CASE 2
\noindent {\bf Case 2} (deg($a_j$) = 4):

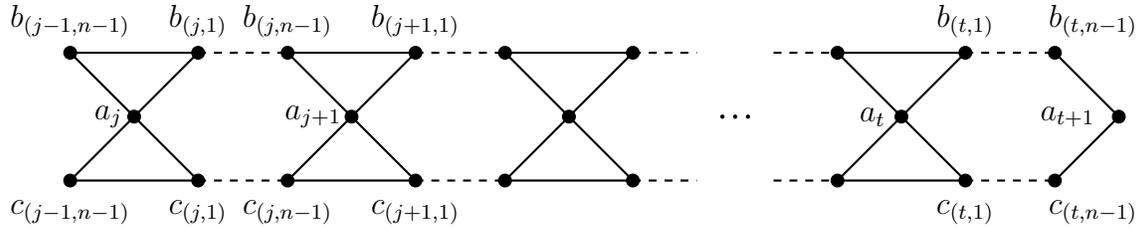
\begin{figure}[!htb] % magic from the internet to put the figure where it goes
\begin{center}
\begin{tikzpicture}[scale = .85]
\filldraw (-1,0) circle (0.1cm); 
\filldraw (-1,2) circle (0.1cm); 
\filldraw (1,0) circle (0.1cm); 
\filldraw (0,1) circle (0.1cm); 
\filldraw (1,2) circle (0.1cm); 
\filldraw (2.4,0) circle (0.1cm); 
\filldraw (2.4,2) circle (0.1cm); 
\filldraw (3.4,1) circle (0.1cm);
\filldraw (4.4,2) circle (0.1cm); 
\filldraw (4.4,0) circle (0.1cm); 
\filldraw (5.8,2) circle (0.1cm); 
\filldraw (5.8,0) circle (0.1cm);
\filldraw (6.8,1) circle (0.1cm);  
\filldraw (7.8,2) circle (0.1cm);
\filldraw (7.8,0) circle (0.1cm);

\filldraw (9.2,1) circle (0.025cm); 
\filldraw (9.4,1) circle (0.025cm); 
\filldraw (9.6,1) circle (0.025cm); 

\filldraw (11,2) circle (0.1cm); 
\filldraw (11,0) circle (0.1cm);
\filldraw (12,1) circle (0.1cm);  
\filldraw (13,2) circle (0.1cm);
\filldraw (13,0) circle (0.1cm); 
\filldraw (15.4,1) circle (0.1cm);  
\filldraw (14.4,2) circle (0.1cm);
\filldraw (14.4,0) circle (0.1cm); 

\node at  (-1,2.5) {$b_{(j-1,n-1)}$};
\node at  (-1,-.5) {$c_{(j-1,n-1)}$};
\node at  (-.4,1) {$a_j$};
\node at  (2.8,1) {$a_{j+1}$};
\node at  (1,2.5) {$b_{(j,1)}$};
\node at  (2.4,2.5) {$b_{(j,n-1)}$};
\node at  (4.4,2.5) {$b_{(j+1,1)}$};
\node at  (1,-.5) {$c_{(j,1)}$};
\node at  (2.4,-.5) {$c_{(j,n-1)}$};
\node at  (4.4,-.5) {$c_{(j+1,1)}$};

\node at  (11.55, 1) {$a_{t}$};
\node at  (14.6, 1) {$a_{t+1}$};
\node at  (13,2.5) {$b_{(t,1)}$};
\node at  (15,2.5) {$b_{(t,n-1)}$};
\node at  (13,-.5) {$c_{(t,1)}$};
\node at  (15,-.5) {$c_{(t,n-1)}$};

\draw[thick] (1, 0) --(-1,0) -- (0, 1) -- (-1,2) --(1,2);
\draw[thick] (1, 0) --(0, 1)--(1,2);
\draw[thick][dashed] (1,0) -- (2.4, 0);
\draw[thick][dashed] (1,2) -- (2.4, 2);
\draw[thick] (2.4, 0) -- (3.4,1)-- (2.4,2);
\draw[thick] (2.4,0) -- (4.4, 0) -- (3.4,1)-- (4.4,2) -- (2.4,2);
\draw[thick][dashed] (4.4,0) -- (5.8, 0);
\draw[thick][dashed] (4.4,2) -- (5.8, 2);
\draw[thick] (5.8, 0) -- (6.8,1)-- (5.8,2);
\draw[thick] (5.8,0) -- (7.8, 0) -- (6.8,1)-- (7.8,2) -- (5.8,2);
\draw[thick][dashed] (7.8,0) -- (8.8, 0);
\draw[thick][dashed] (7.8,2) -- (8.8, 2);

\draw[thick][dashed] (10,0) -- (11, 0);
\draw[thick][dashed] (10,2) -- (11, 2);
\draw[thick] (11, 0) -- (12,1)-- (11,2);
\draw[thick] (11,0) -- (13, 0) -- (12,1)-- (13,2) -- (11,2);
\draw[thick][dashed] (13,0) -- (14.4, 0);
\draw[thick][dashed] (13,2) -- (14.4, 2);
\draw[thick] (14.4, 0) -- (15.4,1)-- (14.4,2);

\end{tikzpicture}
\caption{$G \setminus (A \cup B)$ in Case 2}
\label{fig:case2}
\end{center}
\end{figure}

\begin{itemize}
\item[]\underline{Left child}: The left child of $\Sigma(A, B)$ is $\Sigma(A, B \cup \{a_j \})$. Since  $b_{(j-1,n-1)}$ and $c_{(j-1,n-1)}$ are pivots we add $b_{(j,1)}$ and $c_{(j,1)}$ to $A$ and the neighbors of $b_{(j,1)}$ and $c_{(j,1)}$ to $B$. Continuing to split prepare, we add the remaining vertices of $F_j$ to $A$ and $N(F_j)$ to $B$. 

\begin{itemize}
\item If $n \equiv 2 \pmod{3}$ the largest $k$ for which $\{b_{(j,k)},c_{(j,k)}\} \subseteq F_j$ is $n-1$. When $j=t$, $a_{t+1}$ is added to $B$ and we have a critical cell. When $j \neq t$, $a_{j+1}, b_{(j+1, 1)}$ and $c_{(j+1, 1)}$ are all added to $B$, so there are more pivots before we reach a split ready leaf. Continuing to split prepare, we add $H_{j+1}$ to $A$ and $N(H_{j+1})$ to $B$ and then go to Case 1 after increasing $j$ by one.
 
\item If $n \equiv 1 \pmod{3}$ the largest $k$ for which $\{b_{j,k},c_{j,k}\} \subseteq F_j$ is $n-3$. When $j = t$, $b_{(j,n-1)}$ and $c_{(j,n-1)}$ are pivots with $a_{t+1}$ as the associated matching vertex, so we add $a_{t+1}$ to $A$ and produce a critical cell. When $j \neq t$, we go to Case 2 after increasing $j$ by one.
\item If $n \equiv 0 \pmod{3}$ the largest $k$ for which $\{b_{j,k},c_{j,k}\} \subseteq F_j$ is $n-2$. When $j = t$, $a_{t+1}$ has no neighbors in $G \setminus(A \cup B)$, so we do not get a critical cell. When $j \neq t$, we go to Case 1 after increasing $j$ by one.

\end{itemize}

\item[]\underline{Right child}: The right child in Case 2 is the same as the right child in Case 1.\\
\end{itemize}

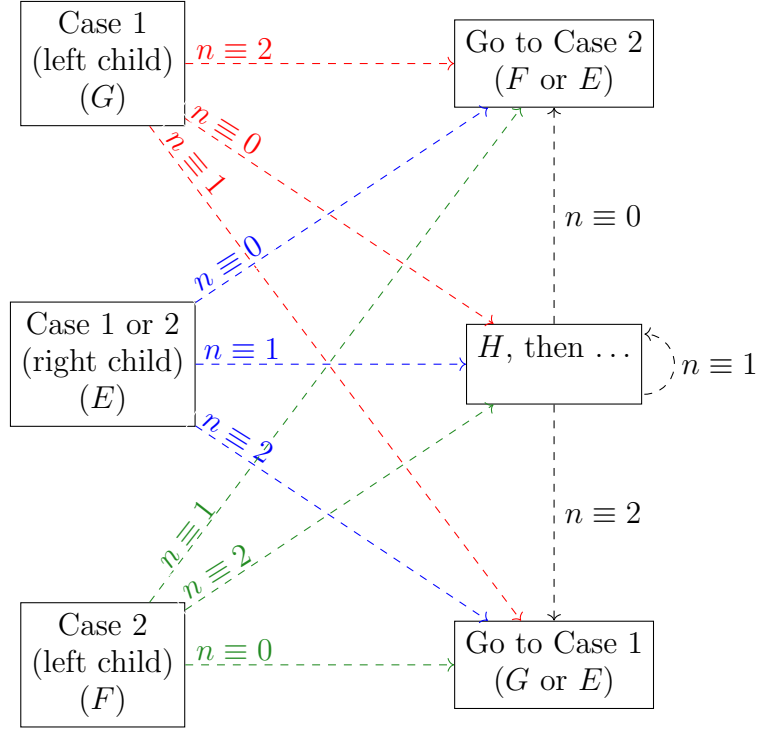
\begin{figure}[!htb] % magic from the internet to put the figure where it goes
\begin{center}
\begin{tikzpicture}
\node (C) [draw, align=center] at (0,0) {Case 1\\ (left child)\\ ($G$)};
\node (A) [draw, align=center] at (0,-4) {Case 1 or 2\\ (right child)\\ ($E$)};
\node (B) [draw, align=center] at (0,-8) {Case 2\\ (left child)\\ ($F$)};

\node (1) [draw, align=center] at (6,0) {Go to Case 2\\ ($F$ or $E$)};
\node (2) [draw, align=center] at (6,-8) {Go to Case 1\\ ($G$ or $E$)};
\node (3) [draw, align=center] at (6,-4) {$H$, then $\ldots$\\ };

\draw[dashed,->, red] (C) -- node[above]{ } (1);
\draw[dashed,->, red] (C) -- node[right]{ } (2);
\draw[dashed,->, red] (C) -- node[right]{ } (3);

%%RED ARROWS
\node[draw = white, rotate = -50] at (1.25, -1.3) {\color{red}{$n \equiv 1$}};
\node at (1.75, 0.2) {\color{red}{$n \equiv 2$}};
\node[draw = white, rotate = -32]  at (1.65, -.85) {\color{red}{$n \equiv 0$}};

\draw[dashed,->, blue] (A) -- node[left] { } (1);
\draw[dashed,->, blue] (A) -- node[left]{} (2);
\draw[dashed,->, blue] (A) -- node[left]{} (3);

%%BLUE ARROWS
\node[draw = white, rotate = 32] at (1.65, -2.65) {\color{blue}{$n \equiv 0$}};
\node at (1.85, -3.8) {\color{blue}{$n \equiv 1$}};
\node[draw = white, rotate = -32]  at (1.8, -4.95) {\color{blue}{$n \equiv 2$}};

%%GREEN ARROWS
\draw[dashed,->, green] (B) -- node[right, above] { } (1);
\draw[dashed,->, green] (B) -- node[right]{ } (2);
\draw[dashed,->, green] (B) -- node[below]{ } (3);

%%GREEN ARROWS
\node[draw = white, rotate = 52] at (1.1, -6.25) {\color{green}{$n \equiv 1$}};
\node[draw = white, rotate = 32] at (1.5, -6.75) {\color{green}{$n \equiv 2$}};
\node at (1.75, -7.8) {\color{green}{$n \equiv 0$}};

%%BLACK ARROWS
\draw[dashed,->, black] (3) -- node[right] { $n \equiv 0$} (1);
\draw[dashed,->, black] (3) -- node[right]{ $n \equiv 2$ } (2);
\draw[dashed, ->] (7.2, -4.4) arc (-90:90:.4cm);

\node at (8.2, -4) {{$n \equiv 1$}};

\end{tikzpicture}

\end{center}
\caption{In the analysis in the proof of Theorem \ref{2ngons}, for any leaf $\Sigma(A, B)$ we represent $A$ as a string of $t$ letters $E, F, G$, and $H$. The chart above aids in determining which strings are possible.}
%\caption{Analysis in the proof of Theorem \ref{2ngons}}
\label{fig:analysis}
\end{figure}

\noindent {\bf Analysis:} 
We will now determine the minimum and maximum number of vertices in the critical cells.
By our algorithm, for any leaf $\Sigma(A,B)$ of the matching tree, $A = X_1 \cup X_2 \cup \dots \cup X_t$ or $A = X_1 \cup X_2 \cup \dots \cup X_t \cup \{a_{t+1}\}$ where each $X_i$ is either $E_i$, $F_i$, $G_i$, or $H_i$. For ease of notation, we will omit the subscripts as well as $a_{t+1}$ and represent $A$ as a string of length $t$ letters $E$, $F$, $G$, and $H$. For example, if $A = E_1 \cup F_2 \cup E_3 \cup \{a_{t+1}\}$, we represent it by $EFE$, and we say that ``$A$ begins with $E$,'' ``$E$ is followed by $F$,'' and so on.
 Whether or not $a_{t+1} \in A$ depends on the final letter of the string. 

We use the casework above to determine which of these strings correspond to critical cells. Much of the information below is also presented in Figure \ref{fig:analysis}.\\

%\hnote{How do we feel about writing ``We must begin'' versus ``A must begin?"}
\noindent If $n \equiv 0 \bmod 3$:
\begin{itemize}
\item $A$ must begin with $E$ or $G$, since we begin in Case 1.
\item $E$ adds $\frac {2n}3-1$ vertices and is followed by $E$ or $F$.
\item $F$ adds $\frac {2n}3$ vertices and is followed by $E$ or $G$.
\item $G$ adds $\frac {2n}3$ vertices and is followed by $H$.
\item $H$ adds $\frac {2n}3-2$ vertices and is followed by $E$ or $F$.
\item $A$ must end with $E$, $G$, or $H$. We cannot end in $F$ by the left child of Case 2. 
\item If $A$ ends with $E$ or $H$, add one vertex (for $a_{t+1}$).
\end{itemize}

%We claim the smallest critical cells occur when $A$ contains $\frac{2nt}{3} - t + 1$ vertices.
Notice that while $H$ adds the smallest number of vertices, it must be preceded by $G$, so the substring $GH$ adds the same number of vertices as $EE$, namely $\frac{4n}{3} -2$. Additionally, regardless of whether we end with $E$, $G$, or $H$, the final two letter substring adds $\frac{4n}{3} -2$ vertices to $A$ because of $a_{j+1}$. Therefore, the smallest critical cells have $\frac{2nt}{3} - t + 1$ elements and occur when $A$ doesn't include any $F$'s (for example, when $A$ is a string of $E$'s). 
Thus, $d^{\min} = \frac{2nt}{3}-t$.

The largest critical cells occur when $A$ contains a maximum number of $F$'s. 
When $t$ is odd, the largest critical cells occur when $A$ is the string $EFEF \cdots FE$. When $t$ is even, the largest critical cells occur when $A$ is the same string but with one additional $E$.
Therefore in the largest critical cells, $A$ contains $\frac{2nt}3 - \lfloor  \frac{t}{2} \rfloor$ vertices. Thus, $d^{\max} = \frac{2nt}3 - \lfloor \frac{t}{2} \rfloor-1$.\\

\noindent If $n \equiv 1 \bmod 3$:
\begin{itemize}
\item $A$ must begin with $E$ or $G$.
\item $E$ adds $\frac {2n+1}3$ vertices and is followed by $H$.
\item $F$ adds $\frac {2n+1}3-1$ vertices and is followed by $E$ or $F$.
\item $G$ adds $\frac {2n+1}3-1$ vertices and is followed by $E$ or $G$.
\item $H$ adds $\frac {2n+1}3-1$ vertices and is followed by $H$.
\item $A$ must end with $E$, $F$ or $H$. We cannot end in $G$ by the left child of Case 1.

\item If $A$ ends with $F$, add one vertex (for $a_{t+1}$).
\end{itemize}

It is impossible for $A$ to include an $F$ and $A$ always contains exactly one $E$. Every critical cell has $\frac {2nt+t}3-t+1$ elements. Thus $d^{\min} = d^{\max} = \frac{2nt+t}3 - t$.\\

\noindent If $n \equiv 2 \bmod 3$:
\begin{itemize}
\item $A$ must begin with $E$ or $G$.
\item $E$ adds $\frac {2n-1}3$ vertices and is followed by $E$ or $G$.
\item $F$ adds $\frac {2n-1}3+1$ vertices and is followed by $H$.
\item $G$ adds $\frac {2n-1}3-1$ vertices and is followed by $E$ or $F$.
\item $H$ adds $\frac {2n-1}3-1$ vertices and is followed by $E$ or $G$.
\item $A$ must end with $F$ or $G$. We cannot end in $E$ by the right child of Case 1 or Case 2 or $H$ by the left child of Case 2. 
\item If $A$ ends with $G$, add one vertex (for $a_{t+1}$).
\end{itemize}

Since $G$ and $H$ add the smallest number of vertices, but $H$ must be preceded by $F$, the smallest critical cells occur when $A$ contains the maximum number of $G$'s possible. When $t$ is odd, the smallest critical cells occur when $A$ is the string $GEGEG \cdots EG$. When $t$ is even, the smallest critical cells occur when $A$ is the string $EGEG \cdots EG$. Therefore the smallest critical cells have $\frac{2nt-t}{3}- \lfloor \frac {t+1}2 \rfloor + 1$ elements and so $d^{\min} = \frac{2nt-t}{3}- \lfloor \frac {t+1}2 \rfloor$.

The largest critical cells have  $\frac{2nt-t}3$ elements. Note that while $F$ adds the largest number of vertices, it must be followed by $H$ and preceded by $G$, and the substring $GFH$ adds fewer vertices than $EEE$. So the largest critical cells occur when $A$ is the string $EEE \cdots EGF$ or  $EEE \cdots EG$. In both cases, $A$ has $\frac{2nt-t}3$ vertices so $d^{\max} = \frac{2nt-t}{3} -1$.

\end{proof}

\begin{example}
Example~\ref{ex:mta} gives an application of Theorem~\ref{2ngons} in the case where $n=3$ and $t=2$. In this example, the matching tree has two nonempty leaf nodes, 
\[\Sigma( \{1,6,11\}, \{ 2,3,4,5,7,8,9,10\}) \text{ and } \Sigma( \{4,5, 11 \}, \{1, 2, 3 , 6, 7, 8, 9, 10\}).\]
Thus the two possibilities for $A$ are $\{1,6, 11\}$, represented by the string $EE$, and $\{4,5, 11 \}$, represented by the string $GH$. Both critical cells have size two, consistent with the fact that $d^{\max}=\frac{2(3)(2)}{3}-2  =d^{\min}$. 
\end{example}

\begin{corollary}
\label{cor:nequiv1}
When $n \equiv 1 \pmod 3$ and $t \geq 2$,
the matching complex of $t$ $2n$-gons is homotopy equivalent to a wedge of $t$ spheres of dimension $\frac{2nt +t}3 - t$.
\end{corollary}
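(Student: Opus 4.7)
The plan is to invoke Theorem~\ref{2ngons} directly. When $n \equiv 1 \pmod 3$, the bounds $d^{\min}$ and $d^{\max}$ coincide at $k := \frac{2nt+t}{3}-t$, so every critical cell produced by the Matching Tree Algorithm applied to the line graph of $t$ $2n$-gons has dimension exactly $k$. A short calculation shows $k = \frac{2t(n-1)}{3} \geq 4$ under the hypotheses $n \geq 4$ and $t \geq 2$. By Theorem~\ref{thm:main_mta}, the matching complex is therefore homotopy equivalent to a CW complex $\Delta_c$ built from a single $0$-cell together with some number $N$ of $k$-cells. Since the $(k-1)$-skeleton of $\Delta_c$ consists of that single $0$-cell, every attaching map $S^{k-1}\to \text{pt}$ is null-homotopic, and hence $\Delta_c \simeq \bigvee_{i=1}^{N} S^k$. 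Everything then reduces to showing $N=t$.

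The counting step is the core of the argument. Using the string encoding from the proof of Theorem~\ref{2ngons}, each critical cell corresponds to an admissible word $X_1 X_2 \cdots X_t$ with $X_j \in \{E,F,G,H\}$; for $n \equiv 1 \pmod 3$ the transition rules recorded there are $E \to H$, $H \to H$, $G \to \{E,G\}$, and $F \to \{E,F\}$, with admissible starts $\{E,G\}$ and admissible ends $\{E,F,H\}$. The crucial observation is that none of $E$, $G$, or $H$ can be followed by $F$, so in any admissible word that begins with $E$ or $G$ the letter $F$ never appears. What remains are precisely the words of the form $G^{j}\, E\, H^{t-1-j}$ for $0 \leq j \leq t-1$, yielding exactly $N=t$ critical cells.

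The main obstacle is the bookkeeping in the previous paragraph: one must verify that the transition table for $n \equiv 1 \pmod 3$ (as summarized in Figure~\ref{fig:analysis}) really does forbid $F$ entirely, and must confirm that both extremal words $G^{t-1}E$ (ending in $E$, so with $a_{t+1}\notin A$) and $E\, H^{t-1}$ (ending in $H$) actually survive the Case-$1$ and Case-$2$ boundary conditions at $j=t$. Once this enumeration is nailed down, the corollary follows immediately by combining Theorem~\ref{2ngons}, Theorem~\ref{thm:main_mta}, and the null-homotopy observation from the first paragraph.
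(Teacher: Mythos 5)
Your proposal is correct and follows essentially the same route as the paper: apply Theorem~\ref{2ngons} to get $d^{\min}=d^{\max}=\frac{2nt+t}{3}-t$, conclude via Theorem~\ref{thm:main_mta} that the complex is a wedge of spheres in that single dimension, and count critical cells by the string analysis for $n\equiv 1\pmod 3$, where the admissible words are exactly $G^{j}EH^{t-1-j}$ (one $E$ preceded by $G$'s and followed by $H$'s), giving $t$ cells. Your extra care in spelling out the null-homotopic attaching maps and the boundary cases at $j=t$ is consistent with, but not a departure from, the paper's argument.
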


\begin{proof}
When $n \equiv 1 \pmod 3$ in Theorem~\ref{2ngons}, $d^{\max} = d^{\min} = \frac{2nt +t}3 - t$.
It follows from Theorem~\ref{thm:main_mta} that the homotopy type is a wedge of spheres.
Recall from the proof of Theorem \ref{2ngons} that $A$ can be thought of as a string of letters of length $t$. Furthermore, by the analysis in the $n \equiv 1$ case, there must be exactly one $E$ in this string, which is preceded by $G$'s and followed by $H$'s. Since the number of critical cells is equal to the number of possible strings of length $t$ of this form, there are $t$ critical cells.
\end{proof}

When $n = 3$ in Figure~\ref{original} we have a $1 \times 1 \times t$ honeycomb graph.
Therefore Theorem~\ref{2ngons} provides partial results for the honeycomb question that arises in~\cite{Jakob}.

\begin{corollary}
\label{1by1bym}
The matching complex of a $1 \times 1\times t$ honeycomb graph is at least $(t-1)$-connected.
\end{corollary}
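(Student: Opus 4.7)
The plan is to observe that a $1 \times 1 \times t$ honeycomb graph is precisely the graph of $t$ hexagons arranged in a line, i.e., the graph in Figure \ref{original} with $n=3$. Once this identification is made, the corollary is essentially a direct specialization of Theorem \ref{2ngons}.

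First I would handle the trivial case $t=1$ separately: the $1\times 1 \times 1$ honeycomb is the $6$-cycle $C_6$, and Kozlov's result (\cite[Proposition 5.2]{Kozlov_Trees}) gives $M(C_6) \simeq S^1 \vee S^1$, which is connected, so the space is at least $0$-connected, matching the bound $t-1 = 0$.

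For $t > 1$, I would apply Theorem \ref{2ngons} with $n=3$. Since $n \equiv 0 \pmod 3$, the theorem yields
\[
d^{\min} \;=\; \frac{2nt}{3} - t \;=\; 2t - t \;=\; t.
\]
The theorem further asserts that $M(G)$ is at least $(d^{\min}-1)$-connected, so $M(G)$ is at least $(t-1)$-connected, as claimed.

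The only subtle point is the identification at the start: one should verify that the line graph of the $1\times 1\times t$ honeycomb is isomorphic (as an unlabeled graph) to the line graph of $t$ hexagons glued in a row as in Figure \ref{original}, which follows immediately from the definition of the honeycomb as the dual of the corresponding hexagonal triangle region. Once that is noted, no additional work is needed — the bulk of the content is already contained in Theorem \ref{2ngons}, and the main obstacle (producing the Matching Tree Algorithm analysis giving $d^{\min}$) has been handled there.
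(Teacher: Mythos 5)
Your proof is correct and follows essentially the same route as the paper: the $t\geq 2$ case is the $n=3$ specialization of Theorem~\ref{2ngons} (where $n\equiv 0 \pmod 3$ gives $d^{\min}=2t-t=t$, hence $(t-1)$-connectivity), and the $t=1$ case is handled separately via $M(C_6)\simeq S^1\vee S^1$, exactly as in the paper. The only difference is your citation of Kozlov's cycle result (Proposition 5.2) rather than the paper's reference to Proposition 4.6, which is if anything the more accurate attribution.
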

\begin{proof}
When $t\geq 2$, this follows immediately from the $n=3$ case of Theorem \ref{2ngons}. When $t = 1$, a $1 \times 1 \times t$ honeycomb graph is the cycle $C_6$, and $M(C_6) \simeq S^1 \vee S^1$ \cite[Proposition 4.6]{Kozlov_Trees}. 
\end{proof}

We will show in Theorem~\ref{2by1bym} that the $2 \times 1 \times t$ honeycomb is $(2t-1)$-connected. This sharpens previous connectivity bounds due to Barmak \cite{Barmak} and Engstr\"om \cite{Engstrom_ClawFree}. In these papers, the authors independently explored the connectivity of the independence complexes of claw-free graphs. The {\em claw graph} is the complete bipartite graph $K_{1, 3}$. A graph is {\em claw-free} if there are no induced subgraphs which are claw graphs.

As the matching complex of a graph is the independence complex of the line graph, and all line graphs are claw-free,  
Barmak's and Engstr\"om's results can be used to obtain connectivity bounds for matching complexes. For the $2 \times 1 \times t$ honeycomb, our bound is sharper than the bounds provided by Barmak \cite[Theorem 5.5]{Barmak} and Engstr\"om \cite[Theorem 3.2]{Engstrom_ClawFree}.

We start with a weak connectivity bound for any $r \times s \times t$ honeycomb graph based on the values of $r$, $s$, and $t$. 
Notice that a honeycomb and its line graph have $r+s-1$ rows of hexagons.
%Define the \emph{height} of a honeycomb graph to be the number of rows of hexagons. Notice that a honeycomb and its line graph have the same height, namely $r +s -1$. 

\begin{remark}\label{lem:numhex}
The number of hexagons in the $i^{th}$ row from the top of an $r \times s \times t$ honeycomb graph (or line graph) is $\rho(i)$, where

\begin{equation*}
    \rho(i) = \begin{cases}
        t+i-1 & \text{if } 1  \le i\le \min(r,s) \\
        t + \min(r,s) - 1 & \text{if } \min(r,s) < i \le \max(r,s)\\
        t + r + s - 1 - i &\text{if } \max(r,s) <i \le r+s - 1
        \end{cases}
\end{equation*} \textbf{}
\iffalse
\begin{proof}
Referring to Figure~\ref{fig:honeycomb},
the first row has $t$ hexagons. For the first $\min(r,s)$ rows, each row has one more hexagon than the preceding row. The number of hexagons in rows $\min(r, s)$ through $\max(r, s)$ is constant. After row $\max(r, s)$, each row has one fewer hexagon than the preceding row. The final row (row $r+s-1$) has $t$ hexagons.
\end{proof}
\fi
\end{remark}

\begin{figure}[!htb] % magic from the internet to put the figure where it goes
\begin{center}
\begin{tikzpicture}[scale = .85]
%\filldraw (0,3) circle (0.1cm); 

\draw[thick] (2.5,2.5) -- (7.5,2.5);
\draw[thick] (1.5,1.5) -- (8.5,1.5);
\draw[thick] (.5,.5) -- (9.5,.5);
\draw[thick] (.5,-.5) -- (9.5,-.5);
\draw[thick] (1.5,-1.5) -- (8.5,-1.5);
\draw[thick] (2.5,-2.5) -- (7.5,-2.5);

\draw[thick] (0,0) -- (2.5,-2.5);
\draw[thick] (1,1) -- (4.5,-2.5);
\draw[thick] (2,2) -- (6.5,-2.5);
\draw[thick] (3.5,2.5) -- (8,-2);
\draw[thick] (5.5,2.5) -- (9,-1);
\draw[thick] (7.5,2.5) -- (10,0);

\draw[thick] (0,0) -- (2.5,2.5);
\draw[thick] (1,-1) -- (4.5,2.5);
\draw[thick] (2,-2) -- (6.5,2.5);
\draw[thick] (3.5,-2.5) -- (8,2);
\draw[thick] (5.5,-2.5) -- (9,1);
\draw[thick] (7.5,-2.5) -- (10,0);

\filldraw (2,2) circle (0.15cm); 
\filldraw (4,2) circle (0.15cm); 
\filldraw (6,2) circle (0.15cm); 
\filldraw (8,2) circle (0.15cm); 

\filldraw (0,0) circle (0.15cm); 
\filldraw (2,0) circle (0.15cm); 
\filldraw (4,0) circle (0.15cm);
\filldraw (6,0) circle (0.15cm);
\filldraw (8,0) circle (0.15cm);
\filldraw (10,0) circle (0.15cm);

\filldraw (2,-2) circle (0.15cm);
\filldraw (4,-2) circle (0.15cm);
\filldraw (6,-2) circle (0.15cm);
\filldraw (8,-2) circle (0.15cm);

\end{tikzpicture}
\caption{The line graph of a $3 \times 3 \times 3$ honeycomb}
\label{fig:333}
\end{center}
\end{figure}
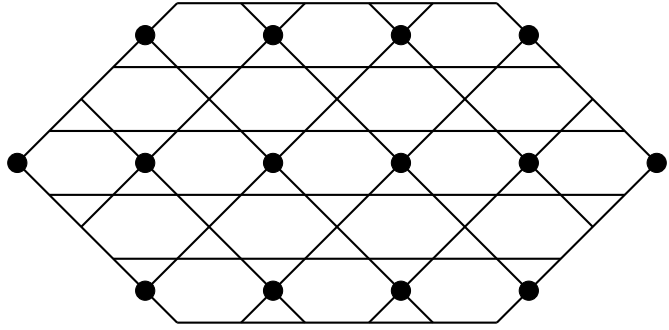

\begin{prop} \label{prop:genhex}
The matching complex of an $r\times s\times t$ honeycomb graph $G$ is at least
$k$-connected, where 
$$k = 
t \left\lceil \frac{r+s-1}2 \right\rceil +\left\lceil \dfrac{ rs }{2} \right\rceil - 2.
$$
\end{prop}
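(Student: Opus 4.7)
The plan is to apply Lemma~\ref{lem:buzzerbeater} to the line graph $L(G)$. Since a $d$-dimensional face of the independence complex has exactly $d+1$ vertices, to conclude that $M(G) = \mathrm{Ind}(L(G))$ is $k$-connected it suffices, by Theorem~\ref{thm:main_mta}, to produce a run of the Matching Tree Algorithm on $L(G)$ in which every critical cell contains at least $k+2$ vertices; the resulting homotopy-equivalent CW complex will then have a single $0$-cell and no further cells in dimensions $1,\dots,k$. By Lemma~\ref{lem:buzzerbeater}, it is therefore enough to exhibit a set $S$ of $k+2 = t\bigl\lceil\frac{r+s-1}{2}\bigr\rceil + \bigl\lceil\frac{rs}{2}\bigr\rceil$ vertices of $L(G)$ whose closed $L(G)$-neighborhoods are pairwise disjoint. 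Under the identification $V(L(G)) = E(G)$, this amounts to choosing a collection of edges of the honeycomb $G$ which is a matching (no two share an endpoint) and moreover has the property that no edge of $G$ is simultaneously incident to two members of the collection.

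The natural scaffold for the construction of $S$ is the row decomposition of the honeycomb from Lemma~\ref{lem:numhex}: the $r+s-1$ rows of hexagons have widths $\rho(1),\dots,\rho(r+s-1)$. The splitting of the target into two summands suggests a two-stage selection. First, I would pick $t$ well-spaced edges in each of the $\lceil(r+s-1)/2\rceil$ odd-indexed rows; the natural choice is the central transversal edge of every other hexagon across such a row, which gives pairwise disjoint closed neighborhoods within the row, yields $t\lceil(r+s-1)/2\rceil$ edges in total, and leaves the intervening even-indexed rows as a buffer isolating selections from different rows. Second, inside the even-indexed rows -- in particular within the $r\times s$ rhombic bulk of the tiling, which is where the product $rs$ naturally appears -- I would place $\lceil rs/2\rceil$ further edges, chosen so that each is separated by at least one unoccupied hexagon from any edge already selected in the first stage and from any other edge selected in the second stage.

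The main obstacle is the disjointness verification: for every pair of selected edges $\overline{e},\overline{f}\in S$ one must check that $\overline{e}$ and $\overline{f}$ share no endpoint in $G$ and that no third edge of $G$ is incident to both, which requires a case split on whether the pair lies in the same row, in two adjacent rows, or whether at least one of them comes from the bulk supplement. A secondary bookkeeping task is to track the ceilings precisely according to the parities of $r+s-1$ and $rs$, so that the construction actually produces the full count $t\lceil(r+s-1)/2\rceil + \lceil rs/2\rceil$ rather than falling one short on the boundary rows. Once these checks are carried out, Lemma~\ref{lem:buzzerbeater} (applied to an MTA run that never splits on any vertex of $S$) forces all critical cells to have at least $k+2$ elements, and Theorem~\ref{thm:main_mta} then yields the claimed $k$-connectivity of $M(G)$.
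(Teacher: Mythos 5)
Your framing is the right one and matches the paper's strategy exactly: translate $k$-connectivity into ``every critical cell of a suitable MTA run on $L(G)$ has at least $k+2$ vertices,'' and obtain this from Lemma~\ref{lem:buzzerbeater} by exhibiting $k+2 = t\lceil\frac{r+s-1}{2}\rceil+\lceil\frac{rs}{2}\rceil$ vertices of $L(G)$ with pairwise disjoint closed neighborhoods (equivalently, edges of $G$ forming an induced matching). The gap is in the construction of this set, which is the entire content of the proof and which you leave unverified. The paper's set is simpler than your two-stage one: in each odd-indexed row it takes \emph{all} $\rho(2i-1)+1$ transversal edges (the edges separating consecutive hexagons of that row, together with the two end edges) and takes \emph{nothing} in the even rows. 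Within one row these edges automatically have pairwise disjoint closed neighborhoods, and distinct odd rows are buffered by the even rows, so the ``main obstacle'' you defer is essentially trivial; the real work is the counting identity $\sum_{i}(\rho(2i-1)+1)=t\lceil\frac{r+s-1}{2}\rceil+\lceil\frac{\max(r,s)\min(r,s)}{2}\rceil$, which the paper verifies using Lemma~\ref{lem:numhex}.

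Your version misreads where the term $\lceil rs/2\rceil$ comes from: it is the aggregated excess $\sum_{i}(\rho(2i-1)+1-t)$ already sitting inside the odd rows (the wider rows have more than $t$ transversal edges, plus the ``$+1$'' end edges), not a supplement to be placed in the even rows or the ``rhombic bulk.'' Taken literally, your stage 2 fails already for $r=s=1$: there are no even rows, yet you still need one edge beyond $t$ per row. More generally, a transversal edge of an even row shares an incident edge of $G$ with the diagonally adjacent transversal edges of the rows above and below, so even-row supplements generically collide with your stage-1 picks, and it is not clear you can fit $\lceil rs/2\rceil$ of them at all. Your stage 1 is also internally inconsistent ($t$ edges per row versus ``every other hexagon,'' which yields about $\lceil\rho/2\rceil$), and the within-row thinning is unnecessary, since all $\rho+1$ transversal edges of a single row are already pairwise admissible. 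Replacing your two-stage selection by ``all transversal edges of every other row'' and then doing the arithmetic closes the gap.
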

\begin{proof}
Consider the subset $X \subset V(G)$ consisting of the center vertices of each hexagon in every $i$th row for all $i \in \{1, 3, 5, \ldots, r+s-1\}$ (see Figure~\ref{fig:333}). Each row contributes one more vertex than the number of hexagons in the row. 
We see that $|X| = \sum\limits_{i=1}^{\lceil \frac{r+s-1}2 \rceil} (\rho(2i-1)+1)$, 
where $\rho(i)$ is defined in Remark~\ref{lem:numhex}.
Since the neighbors of each vertex are disjoint, we can apply Lemma~\ref{lem:buzzerbeater} to conclude that all critical cells have at least $\sum\limits_{i=1}^{\lceil \frac{r+s-1}2 \rceil} (\rho(2i-1)+1)$ elements. We claim that
$$\sum\limits_{i=1}^{\lceil \frac{r+s-1}2 \rceil} (\rho(2i-1)+1) = t \left\lceil \frac{r+s-1}2 \right\rceil +\left\lceil \dfrac{rs }{2} \right\rceil$$

We will prove this when $r$ and $s$ are odd. The general case is similar. By the definition of $\rho$, 

\begin{eqnarray*}
& & \sum\limits_{i=1}^{\lceil \frac{r+s-1}2 \rceil} (\rho(2i-1)+1)\\
%& =&
%\sum\limits_{i =1}^{ \frac{ \min(r, s) +1}{2} } (t + 2i-1) + 
%\sum\limits_{i =\frac{ \min(r, s) +1}{2} + 1}^{ \frac{ \max(r, s) +1}{2}} (t + \min(r, s) ) + 
%\sum\limits_{i =\frac{ \max(r, s) +1}{2} +1}^{ \lceil \frac{r+s-1}2 \rceil} (t + r + s  - (2i-1) ) \\
& = & 
\sum\limits_{i=1}^{\lceil \frac{r+s-1}2 \rceil} t + 
\sum\limits_{i =1}^{ \frac{ \min(r, s) +1}{2} }  (2i-1) + 
\sum\limits_{i =\frac{ \min(r, s) +1}{2} + 1}^{ \frac{ \max(r, s) +1}{2} }  \min(r, s) +
\sum\limits_{i =\frac{ \max(r, s) +1}{2} + 1 }^{ \lceil \frac{r+s-1}2 \rceil} (r + s  - 2i+1). \\
\end{eqnarray*}

Since $\sum\limits_{i =\frac{ \max(r, s) +1}{2} + 1 }^{ \lceil \frac{r+s-1}2 \rceil} (r + s  - 2i+1) = 
\sum\limits_{i =1 }^{ \frac{\min(r, s)- 1}2 } (\min(r, s) - 2i)$, we see that
\begin{eqnarray*}
 \sum\limits_{i=1}^{\lceil \frac{r+s-1}2 \rceil} (\rho(2i-1)+1)
 &=& 
\sum\limits_{i=1}^{\lceil \frac{r+s-1}2 \rceil} t + 
\left( \sum\limits_{i =1 }^{ \frac{ \max(r, s) +1}{2}}\min(r, s) \right)
 - \frac{\min(r, s)- 1}2 \\
& =&
 t \left\lceil \frac{r+s-1}2 \right\rceil +\left\lceil \dfrac{ \max(r, s) \min(r, s) }{2} \right\rceil
\end{eqnarray*}

The connectivity bound follows.
\end{proof}
 For a $2 \times 1 \times t$ honeycomb graph, we can greatly improve this connectivity bound and obtain 
both upper and lower bounds for the sizes of the critical cells.

The following lemma describes the outcome to applying the Matching Tree Algorithm to a configuration that frequently occurs in the proof of Theorem~\ref{2by1bym}. 
\begin{lemma}
\label{lem:snowman}

Let $\Sigma(A, B)$ be a split ready leaf associated to the configuration with a hexagon and a triangle that share an edge. Then, after completing the Matching Tree Algorithm, the two leaf nodes 
$\Sigma(\tilde{A}, \tilde{B})$, $\Sigma(\hat{A}, \hat{B})$
with lowest common ancestor $\Sigma(A, B)$ 
have $|\tilde{A}| = |\hat{A}|= |A| + 2$.
\end{lemma}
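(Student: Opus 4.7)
The plan is to fix explicit labels on the configuration, choose a splitting vertex that exploits its symmetry with respect to the shared edge, and then verify by degree inspection that each of the two resulting branches contains a single descendant leaf obtained after exactly two pivot steps. Label the hexagon cyclically as $v_1 v_2 v_3 v_4 v_5 v_6$ with $v_1 v_2$ the shared edge, and let $v_7$ be the remaining triangle vertex. I would take $v_4$, the hexagon vertex antipodal to the shared edge, as the splitting vertex. The motivation for this choice is that removing $v_4$ (from either side of the split) produces degree-$1$ vertices that force deterministic split preparation, so neither branch creates a free vertex and neither branch requires a second splitting step.

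For the right child $\Sigma(A \cup \{v_4\}, B \cup \{v_3, v_5\})$, the surviving vertex set $\{v_1, v_2, v_6, v_7\}$ spans only the triangle $v_1 v_2 v_7$ together with the edge $v_1 v_6$. The vertex $v_6$ has exactly one remaining neighbor, so $v_6$ is a pivot with matching vertex $v_1$; adding $v_1$ to $A$ together with $N(v_1) = \{v_2, v_6, v_7\}$ to $B$ exhausts the configuration. Only $v_4$ and $v_1$ are added to $A$, giving $|\hat{A}| = |A| + 2$.

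For the left child $\Sigma(A, B \cup \{v_4\})$, the surviving vertices $\{v_1, v_2, v_3, v_5, v_6, v_7\}$ retain all configuration edges except those incident to $v_4$. Both $v_3$ and $v_5$ now have a single remaining neighbor ($v_2$ and $v_6$, respectively); by Lemma \ref{split} the order in which they are used does not affect the outcome. Using $v_3$ first adds $v_2$ to $A$ and $N(v_2) = \{v_1, v_3, v_7\}$ to $B$, after which $v_5$ remains a pivot whose matching vertex $v_6$ is added to $A$, filling out $B$. Exactly $\{v_2, v_6\}$ are added to $A$, so $|\tilde{A}| = |A| + 2$.

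There is no substantive obstacle: the verification is essentially a bookkeeping check of degrees in a $7$-vertex graph. The only real content is the choice of $v_4$ as the splitting vertex, which guarantees that each branch terminates after exactly two pivot steps rather than producing a free vertex (which would collapse a branch to the empty leaf) or another split-ready descendant (which would give more than two descendant leaves).
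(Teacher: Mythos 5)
Your proof is correct and takes essentially the same approach as the paper: split the seven-vertex configuration at a well-chosen hexagon vertex and verify by direct degree bookkeeping that each child split-prepares, without further splits or free vertices, to a single nonempty leaf whose $A$-set gains exactly two vertices. Your choice of splitting vertex (the hexagon vertex adjacent to neither endpoint of the shared edge) in fact matches the paper's stated counts---two matching vertices added in the left child, one in the right---even though the paper's figure marks a different vertex of the hexagon; as the paper's footnote observes, the choice does not affect the size or number of the resulting critical cells.
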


\begin{proof}

Choose the indicated vertex of the hexagon as our next splitting vertex, as pictured to the right\footnote{For this configuration, the choice of splitting vertex does not affect the size or quantity of critical cells, but this is not true for all configurations.}. 
Then splitting produces two leaves, $\Sigma(A, B \cup \{v \})$ and $\Sigma(A \cup \{v\}, B \cup N(v))$. 

\noindent
\begin{minipage}[t]{.775\textwidth}
\vspace{-.75cm}
%\begin{flushleft}
\flushleft
When split preparing the left child $\Sigma(A, B \cup \{v \})$ we add two matching vertices to $A$. When split preparing the right child $\Sigma(A \cup \{v\}, B \cup N(v))$, we add one matching vertex to $A \cup \{v\}$. Thus, the two children of $\Sigma(A, B)$ have critical cells with $|A| + 2$ elements. 
%\end{flushleft}
\end{minipage}
\begin{minipage}{.2\textwidth}
%\begin{figure}[h]

\begin{tikzpicture}[scale = .85]

 %the following dot is just to get the picture at the right height
 \filldraw[color=white,draw = white] (12,2) circle (0.05cm); 
  
\filldraw (11,0.5) circle (0.1cm); 

\draw[thick] (11.5,1) -- (12.5,1);
\draw[thick] (11.5,0) -- (12.5,0);
\draw[thick] (11,.5) -- (11.5,0);
\draw[thick] (12,1.5) -- (13,.5);
\draw[thick] (11,.5) -- (12,1.5);
\draw[thick] (12.5,0) -- (13,.5);

\node at (10.75, .75) {$v$};

\end{tikzpicture}
%\end{figure}

\end{minipage}

\end{proof}

\begin{theorem}
\label{2by1bym}
Let $t \geq 1$ and $n > 2$. 
Define
%$d^{\min} \leq d \leq d^{\max}$
$d^{\min} = 2t$
and
$d^{\max} = \frac{7t}{3}+1$.The matching complex of a $2 \times 1 \times t$ honeycomb graph is homotopy equivalent to a space with no $d$-dimensional cells, where $0 < d < d^{\min}$ and $d \ge d^{\max}$. Further, the connectivity is at least $2t -1$.
\end{theorem}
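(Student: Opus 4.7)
The plan is to apply the Matching Tree Algorithm to the line graph of the $2 \times 1 \times t$ honeycomb graph, following the same general strategy used in the proof of Theorem \ref{2ngons}. I first set up a coordinate labeling so that the line graph decomposes naturally into $t$ successive ``columns,'' one for each column of the two-row honeycomb. The key local picture, which recurs as we process columns from left to right, is a hexagon glued along an edge to a triangle, for which Lemma \ref{lem:snowman} provides precise control over both children of a split.

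For the lower bound $d^{\min}=2t$, the plan is to exhibit a set $\{v_1,\dots,v_{2t+1}\}$ of vertices in the line graph whose closed neighborhoods $\overline{N}(v_i)$ are pairwise disjoint, and then invoke Lemma \ref{lem:buzzerbeater}. Natural candidates are interior edges in each hexagon chosen so that no two lie in hexagons sharing a boundary; since the $2 \times 1 \times t$ honeycomb has $2t$ hexagons arranged as two parallel rows of $t$, a checkerboard-style choice together with one boundary edge yields the required $2t+1$ vertices. Then I run the MTA avoiding these as splitting vertices to conclude every critical cell $A$ satisfies $|A|\ge 2t+1$, i.e.\ dimension $\ge 2t$.

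For the upper bound, the plan is to run the MTA while always choosing the leftmost available splitting vertex, so that after any split-preparing phase the remaining graph has one of a small list of canonical configurations at its left end (essentially hexagon-plus-triangle, or a bare triangle). At each such configuration one can compute, via Lemma \ref{lem:snowman} and direct inspection, how many elements each of the two children adds to $A$. Grouping the resulting contributions into blocks of three consecutive columns, I would show that no three-column block contributes more than $7$ elements to $A$ on average, leading to $|A|\le \tfrac{7t}{3}+2$ on every branch, hence critical cell dimension at most $\tfrac{7t}{3}+1$. Combined with Theorem \ref{thm:main_mta}, this gives homotopy equivalence to a CW complex with no cells of dimension $d$ for $0<d<d^{\min}$ or $d\ge d^{\max}$, and the connectivity statement follows from the absence of cells in dimensions $1,\dots,d^{\min}-1$.

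The main obstacle is the $\tfrac{7t}{3}$ in $d^{\max}$: unlike Theorem \ref{2ngons}, where each polygon contributes an integer number of vertices to $A$, here the worst-case contribution per column is fractional. A clean per-column bound is not available; instead I expect to need an amortized analysis showing that after two ``greedy'' columns the third column is forced into a smaller configuration (because a leftover matching vertex from the previous block kills a potential splitting choice). Making this amortization rigorous, and verifying that the boundary columns at $j=1$ and $j=t$ do not break the averaging, is the technically delicate part of the argument; the rest is a careful but mechanical extension of the casework already developed for Theorem \ref{2ngons}.
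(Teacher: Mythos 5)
Your upper-bound sketch follows the paper's strategy (amortizing the growth of $|A|$ over the cycle of configurations, which in the paper yields the ratio $7$ elements per $6$ consumed hexagons via the Case-$6$ transition), but your lower bound rests on an idea that cannot work. You propose to find $2t+1$ vertices of the line graph with pairwise disjoint closed neighborhoods and invoke Lemma \ref{lem:buzzerbeater}. Disjointness of closed neighborhoods in $L(G)$ means the corresponding edges of the honeycomb form an induced (strong) matching: pairwise non-adjacent and with no edge of the honeycomb joining two of them. The $2\times 1\times t$ honeycomb has only $8t+3$ edges, and any edge both of whose endpoints have degree $3$ has a closed neighborhood of size $5$ in $L(G)$; since the closed neighborhoods must be disjoint, a family of size $2t+1$ would force all but a bounded number of the chosen edges to touch a degree-$2$ (boundary) vertex. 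Those edges lie on the top and bottom boundary paths (each with $2t$ edges), and along a path an induced matching can use at most one edge out of every three, so at most roughly $2t/3$ per path. Hence the maximum such family has size about $\tfrac{4t}{3}+O(1)$, which is strictly below $2t+1$ once $t$ is moderately large; your ``checkerboard'' choice over the $2t$ hexagons would in any case produce only about $t$ edges. This is exactly why the paper's Proposition \ref{prop:genhex} (which is the optimized version of the buzzerbeater strategy) gives only critical cells of size $t+1$, i.e.\ $(t-1)$-connectivity, for this family, and why the paper states that the improvement to $2t$ requires casework.

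The paper's actual proof of $d^{\min}=2t$ is of a different nature: it runs the Matching Tree Algorithm with configuration-dependent splitting vertices, introduces the invariant counting the complete hexagons remaining in $G\setminus(A\cup B)$ (which starts at $2t$), and verifies through the six configurations that every transition increases $|A|$ by at least as much as it decreases that count, with a strict surplus whenever a critical cell is produced; Lemma \ref{lem:snowman} handles the terminal hexagon-plus-triangle configurations. Without such an invariant (or some substitute), your plan has no mechanism to certify $|A|\ge 2t+1$. Two smaller issues: choosing ``the leftmost available splitting vertex'' is not innocuous (Remark \ref{rem:splitorder} and the footnote to Lemma \ref{lem:snowman} warn that the choice of splitting vertex can change the critical cells, so your casework would need to be verified for that specific rule), and your upper bound as stated gives only $|A|\le \tfrac{7t}{3}+2$, i.e.\ dimension $\le \tfrac{7t}{3}+1$, whereas the theorem needs the strict conclusion that no cell of dimension $d\ge \tfrac{7t}{3}+1$ occurs, which the paper obtains by showing the ratio $7/6$ is only approached, never exceeded, before the bounded terminal surplus.
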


\begin{proof}
A $2\times 1 \times t$ honeycomb is made up of two rows of $t$ hexagons with the line graph shown in Figures~\ref{2m1} and \ref{fig:steps}. 
%%%%%%%%%%%%%%%%%%%%%%%%%Removed%%%%%%%%%%%
\iffalse
\begin{figure}[!htb] % magic from the internet to put the figure where it goes
\begin{center}
\begin{tikzpicture}[scale = .85]
%\filldraw (0,3) circle (0.1cm); 

\draw[thick] (.5,2) -- (5.5,2);
\draw[thick][dashed] (5.5,2) -- (6,2);
\draw[thick] (.5,1) -- (6.5,1);
\draw[thick][dashed] (6.5,1) -- (7,1);
\draw[thick] (1.5,0) -- (7.5,0);
\draw[thick][dashed] (7.5,0) -- (8,0);

\draw[thick][dashed] (9,2) -- (9.5,2);
\draw[thick] (9.5,2) -- (11.5,2);
\draw[thick][dashed] (10,1) -- (10.5,1);
\draw[thick] (10.5,1) -- (12.5,1);
\draw[thick][dashed] (11,0) -- (11.5,0);
\draw[thick] (11.5,0) -- (12.5,0);

\draw[thick] (0,1.5) -- (1.5,0);
\draw[thick] (1.5,2) -- (3.5,0);
\draw[thick] (3.5,2) -- (5.5,0);
\draw[thick] (5.5,2) -- (7.5,0);

\draw[thick] (9.5,2) -- (11.5,0);
\draw[thick] (11.5,2) -- (13,.5);

\draw[thick] (0,1.5)--(.5,2);
\draw[thick] (1,.5)--(2.5,2);
\draw[thick] (2.5,0)--(4.5,2);
\draw[thick] (4.5,0)--(6,1.5);
\draw[thick][dashed] (6,1.5)--(6.25,1.75);
\draw[thick] (6.5,0)--(7,.5);
\draw[thick][dashed] (7,.5)--(7.25,.75);

\draw[thick][dashed] (9.75,1.25) -- (10,1.5);
\draw[thick] (10,1.5) -- (10.5,2);
\draw[thick][dashed] (10.75,.25) -- (11,.5);
\draw[thick] (11,.5) -- (12,1.5);
\draw[thick] (12.5,0) -- (13,.5);

\filldraw (8.3,1) circle (0.025cm); 
\filldraw (8.5,1) circle (0.025cm); 
\filldraw (8.7,1) circle (0.025cm); 

\end{tikzpicture}
\caption{The line graph of a $2 \times 1 \times t$ honeycomb}
\label{2mline}
\end{center}
\end{figure}

\fi
%%%%%%%%%%%%%%%%%%%%%%%%%%%%%%%%%%%%%%%%%%%%%%%%%%
The proof of this theorem is similar to the proof of Theorem \ref{2ngons}. We will again use the Matching Tree Algorithm with specific choices of splitting vertices. However, unlike in Theorem \ref{2ngons}, at each step the choice of the next splitting vertex depends on the current configuration. We apply the following algorithm:

For each split ready leaf node $\Sigma(A, B)$:
\begin{itemize}
\item[] Step 1: Choose the appropriate splitting vertex $v$ depending on the current configuration. This produces two leaves, $\Sigma(A, B \cup \{v\})$ and $\Sigma(A \cup \{v\}, B \cup N(v))$.
\item[] Step 2: Split prepare each leaf. 
\end{itemize}

At Step 1 of this algorithm, there will be six possible configurations distinguished by the leftmost part of the graph up to vertical reflection, as seen in Figure~\ref{2m1} and Figures~\ref{2m2} -- ~\ref{2m6}.
For each of these cases, we mark the splitting vertex in bold. Note that our choices of splitting vertices is somewhat arbitrary. They are intended to minimize casework, but it is possible that our choices could be further optimized. 

Let \Thex~:=\Thex$(\Sigma(A,B))$ be the number of complete hexagons in $G \setminus (A \cup B)$. Note that \Thex$(\Sigma(\emptyset,\emptyset)) = 2t$ and \Thex$(\Sigma(A,B)) = 0$ when $\Sigma(A,B)$ corresponds to a critical cell. 

As we move down our matching tree, we add vertices to the left set of the matching tree nodes. When $\Sigma(A',B')$ is a descendant of $\Sigma(A,B)$ we will abuse notation by saying that $|A|$ ``increases'' by $|A'| - |A|$ as we go from $\Sigma(A,B)$ to $\Sigma(A',B')$. As we proceed through the algorithm, we will keep track of $|A|$, which will increase as we move down the matching tree and \Thex, which will decrease as we move down the matching tree. We calculate $d^{\min}$ and $d^{\max}$ by observing the relationship between the changes to $|A|$ and \Thex.\\

\noindent {\bf Case 1:}

\begin{figure}[!htb] % magic from the internet to put the figure where it goes
\begin{center}
\begin{tikzpicture}[scale = .85]

\filldraw (.5,1) circle (0.1cm); 

\draw[thick] (.5,2) -- (5.5,2);
\draw[thick][dashed] (5.5,2) -- (6,2);
\draw[thick] (.5,1) -- (6.5,1);
\draw[thick][dashed] (6.5,1) -- (7,1);
\draw[thick] (1.5,0) -- (7.5,0);
\draw[thick][dashed] (7.5,0) -- (8,0);

\draw[thick] (0,1.5) -- (1.5,0);
\draw[thick] (1.5,2) -- (3.5,0);
\draw[thick] (3.5,2) -- (5.5,0);
\draw[thick] (5.5,2) -- (7.5,0);

\draw[thick] (0,1.5)--(.5,2);
\draw[thick] (1,.5)--(2.5,2);
\draw[thick] (2.5,0)--(4.5,2);
\draw[thick] (4.5,0)--(6,1.5);
\draw[thick][dashed] (6,1.5)--(6.25,1.75);
\draw[thick] (6.5,0)--(7,.5);
\draw[thick][dashed] (7,.5)--(7.25,.75);

\end{tikzpicture}
\caption{Case 1}
\label{2m1}
\end{center}
\end{figure}

\begin{itemize}
\item For the left child ($\Sigma(A,B \cup \{v\})$):
\begin{itemize}
\begin{minipage}{.65\textwidth}
\item If \Thex~ = 2, we have the configuration pictured right. We add one matching vertex to $A$ when split preparing, and the next configuration is one hexagon and a triangle that share an edge. By Lemma~\ref{lem:snowman}, we get two critical cells. For each of the cells, $|A|$ increases by 3 while \Thex~ decreases by 2. 
\end{minipage}\hfill \hspace{.5cm}
\begin{minipage}{.25\textwidth}

\begin{tikzpicture}[scale = .85]

\filldraw (10.5,1) circle (0.1cm); 
\draw[thick] (10.5,2) -- (11.5,2);
\draw[thick] (10.5,1) -- (12.5,1);
\draw[thick] (11.5,0) -- (12.5,0);
\draw[thick] (10,1.5) -- (11.5,0);
\draw[thick] (11.5,2) -- (13,.5);
\draw[thick] (10,1.5) -- (10.5,2);
\draw[thick] (11,.5) -- (12,1.5);
\draw[thick] (12.5,0) -- (13,.5);

\end{tikzpicture}

\end{minipage}

\item If \Thex~ $\geq 3$, we add one matching vertex to $A$, decrease \Thex~ by one, and repeat Case 1, as a vertical reflection. The argument is analogous by symmetry. From now on, we will take these vertical symmetries for granted without further comment.

\end{itemize}

\item For the right child ($\Sigma(A \cup \{v\},B \cup N(v))$):

\begin{itemize}
\item If \Thex~ $\leq 5$, we get no critical cell.
\item If \Thex~ $=6$, we add five vertices to $A$ when split preparing, including $v$ (see Figure~\ref{fig:steps}). We are left with one hexagon with two attached triangles and we choose our next splitting vertex to be any of the vertices on both the hexagon and a triangle. Then, we get one critical cell, adding two more vertices to $A$: either two matching vertices or one splitting and one matching vertex. Thus, for each critical cell, $|A|$ increases by 7 while \Thex~ decreases by 6. 
\item If \Thex~ $>6$, we add five vertices to $A$ including $v$, decrease \Thex~ by five, and go to Case 2.\\

\end{itemize}
\end{itemize}

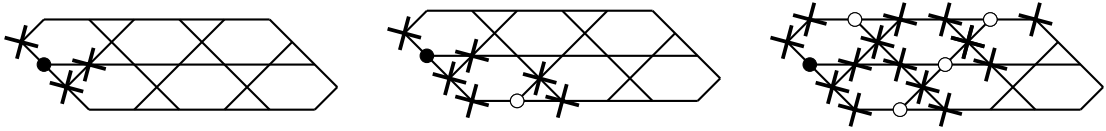
\begin{figure}[!htb] % magic from the internet to put the figure where it goes
\begin{minipage}{.3\textwidth}
%\begin{center}
\begin{tikzpicture}[scale = .6]

\filldraw (.5,1) circle (0.15cm); 
%\node at (0,1.5) {\textbf{X}};
\draw (0,1.5) node[cross] {};
%\node at (1.5,1) {\textbf{X}};
\draw (1.5,1) node[cross] {};
%\node at (1,.5) {\textbf{X}};
\draw (1, .5) node[cross] {};

\draw[thick] (.5,2) -- (5.5,2);

\draw[thick] (.5,1) -- (6.5,1);

\draw[thick] (1.5,0) -- (6.5,0);

\draw[thick] (0,1.5) -- (1.5,0);
\draw[thick] (1.5,2) -- (3.5,0);
\draw[thick] (3.5,2) -- (5.5,0);
\draw[thick] (5.5,2) -- (7, .5);

\draw[thick] (0,1.5)--(.5,2);
\draw[thick] (1,.5)--(2.5,2);
\draw[thick] (2.5,0)--(4.5,2);
\draw[thick] (4.5,0)--(6,1.5);

\draw[thick] (6.5,0)--(7,.5);

\end{tikzpicture}
%\end{center}
\end{minipage}
\begin{minipage}{.3\textwidth}
%\begin{center}
\begin{tikzpicture}[scale = .6]

\filldraw (.5,1) circle (0.15cm); 
%\node at (0,1.5) {\textbf{X}};
\draw (0,1.5) node[cross] {};
%\node at (1.5,1) {\textbf{X}};
\draw (1.5,1) node[cross] {};
%\node at (1,.5) {\textbf{X}};
\draw (1, .5) node[cross] {};

\filldraw[fill=white] (2.5,0) circle (0.15cm); 
%\node at (1.5,0) {\textbf{X}};
\draw (1.5,0) node[cross] {};
%\node at (3.5,0) {\textbf{X}};
\draw (3.5,0) node[cross] {};
%\node at (3,.5) {\textbf{X}};
\draw (3,.5) node[cross] {};

\draw[thick] (.5,2) -- (5.5,2);

\draw[thick] (.5,1) -- (6.5,1);

\draw[thick] (1.5,0) -- (6.5,0);

\draw[thick] (0,1.5) -- (1.5,0);
\draw[thick] (1.5,2) -- (3.5,0);
\draw[thick] (3.5,2) -- (5.5,0);
\draw[thick] (5.5,2) -- (7, .5);

\draw[thick] (0,1.5)--(.5,2);
\draw[thick] (1,.5)--(2.5,2);
\draw[thick] (2.5,0)--(4.5,2);
\draw[thick] (4.5,0)--(6,1.5);

\draw[thick] (6.5,0)--(7,.5);
\filldraw[fill=white] (2.5,0) circle (0.15cm); 

\end{tikzpicture}
%\caption{Case 1, T6 Step 2}
%\label{fig:step2}
%\end{center}
\end{minipage}
\begin{minipage}{.3\textwidth}
%\begin{center}
\begin{tikzpicture}[scale = .6]

\filldraw (.5,1) circle (0.15cm); 
%\node at (0,1.5) {\textbf{X}};
\draw (0,1.5) node[cross] {};
%\node at (1.5,1) {\textbf{X}};
\draw (1.5,1) node[cross] {};
%\node at (1,.5) {\textbf{X}};
\draw (1, .5) node[cross] {};

\filldraw[fill=white] (2.5,0) circle (0.15cm); 
%\node at (1.5,0) {\textbf{X}};
\draw (1.5,0) node[cross] {};
%\node at (3.5,0) {\textbf{X}};
\draw (3.5,0) node[cross] {};
%\node at (3,.5) {\textbf{X}};
\draw (3,.5) node[cross] {};

\filldraw[fill=white] (1.5,2) circle (0.15cm); 
%\node at (.5,2) {\textbf{X}};
\draw (.5,2) node[cross] {};
%\node at (2.5,2) {\textbf{X}};
\draw (2.5,2) node[cross] {};
%\node at (2,1.5) {\textbf{X}};
\draw (2,1.5) node[cross] {};

\filldraw[fill=white] (3.5,1) circle (0.15cm); 
%\node at (2.5,1) {\textbf{X}};
\draw (2.5,1) node[cross] {};
%\node at (4.5,1) {\textbf{X}};
\draw (4.5,1) node[cross] {};
%\node at (4,1.5) {\textbf{X}};
\draw (4,1.5) node[cross] {};

\filldraw[fill=white] (4.5,2) circle (0.15cm); 
%\node at (3.5,2) {\textbf{X}};
\draw (3.5,2) node[cross] {};
%\node at (5.5,2) {\textbf{X}};
\draw (5.5,2) node[cross] {};
%\node at (4,1.5) {\textbf{X}};
\draw (4,1.5) node[cross] {};

\draw[thick] (.5,2) -- (5.5,2);

\draw[thick] (.5,1) -- (6.5,1);

\draw[thick] (1.5,0) -- (6.5,0);

\draw[thick] (0,1.5) -- (1.5,0);
\draw[thick] (1.5,2) -- (3.5,0);
\draw[thick] (3.5,2) -- (5.5,0);
\draw[thick] (5.5,2) -- (7, .5);

\draw[thick] (0,1.5)--(.5,2);
\draw[thick] (1,.5)--(2.5,2);
\draw[thick] (2.5,0)--(4.5,2);
\draw[thick] (4.5,0)--(6,1.5);

\draw[thick] (6.5,0)--(7,.5);

\filldraw[fill=white] (2.5,0) circle (0.15cm); 
\filldraw[fill=white] (1.5,2) circle (0.15cm); 
\filldraw[fill=white] (3.5,1) circle (0.15cm); 
\filldraw[fill=white] (4.5,2) circle (0.15cm); 

\end{tikzpicture}

%\label{fig:step2}
%\end{center}
\end{minipage}
\caption{Case 1 for \Thex~$=6$: The vertices in $A$ are represented with a circle, where an open circle represents a matching vertex. The vertices in $B$ are represented with an ``$X$''. The leftmost image shows the splitting vertex in $A$ and its neighborhood in $B$. Continuing to split prepare, we see there are two possible matching vertices. The middle picture shows the result of choosing one of them. We continue this process until there are no more matching vertices, shown in the rightmost image. We are left with a hexagon with two attached triangles. }
\label{fig:steps}
\end{figure}

\noindent {\bf Case 2:}

\begin{figure}[!htb] % magic from the internet to put the figure where it goes
\begin{center}
\begin{tikzpicture}[scale = .85]

\filldraw (-.5,2) circle (0.1cm); 

\draw[thick] (-.5,2) -- (5.5,2);
\draw[thick][dashed] (5.5,2) -- (6,2);
\draw[thick] (.5,1) -- (6.5,1);
\draw[thick][dashed] (6.5,1) -- (7,1);
\draw[thick] (1.5,0) -- (7.5,0);
\draw[thick][dashed] (7.5,0) -- (8,0);

\draw[thick] (-.5,2) -- (1.5,0);
\draw[thick] (1.5,2) -- (3.5,0);
\draw[thick] (3.5,2) -- (5.5,0);
\draw[thick] (5.5,2) -- (7.5,0);

\draw[thick] (0,1.5)--(.5,2);
\draw[thick] (1,.5)--(2.5,2);
\draw[thick] (2.5,0)--(4.5,2);
\draw[thick] (4.5,0)--(6,1.5);
\draw[thick][dashed] (6,1.5)--(6.25,1.75);
\draw[thick] (6.5,0)--(7,.5);
\draw[thick][dashed] (7,.5)--(7.25,.75);

\end{tikzpicture}
\caption{Case 2}
\label{2m2}
\end{center}
\end{figure}

\begin{itemize}
\item For the left child ($\Sigma(A,B \cup \{v\})$):
\begin{itemize}
\item We do not add anything to $A$ or change \Thex. Go back to Case 1. 

\end{itemize}

\item For the right child ($\Sigma(A \cup \{v\},B \cup N(v))$):

\begin{itemize}
\item If \Thex~$ = 2$, we get a critical cell after adding four vertices to $A$, including $v$. %\Thex~ decreases by 2.
%\mnote{I think two critical cells.}

\item If \Thex~ $\geq 3$, we only add $v$ to $A$, decrease \Thex~ by one, then go to Case 3.\\ 
\end{itemize}
\end{itemize}

\noindent {\bf Case 3:}

\begin{figure}[!htb] % magic from the internet to put the figure where it goes
\begin{center}
\begin{tikzpicture}[scale = .85]

\filldraw (1.5,1) circle (0.1cm); 

\draw[thick] (1.5,2) -- (5.5,2);
\draw[thick][dashed] (5.5,2) -- (6,2);
\draw[thick] (.5,1) -- (6.5,1);
\draw[thick][dashed] (6.5,1) -- (7,1);
\draw[thick] (1.5,0) -- (7.5,0);
\draw[thick][dashed] (7.5,0) -- (8,0);

\draw[thick] (.5,1) -- (1.5,0);
\draw[thick] (1.5,2) -- (3.5,0);
\draw[thick] (3.5,2) -- (5.5,0);
\draw[thick] (5.5,2) -- (7.5,0);

\draw[thick] (1,.5)--(2.5,2);
\draw[thick] (2.5,0)--(4.5,2);
\draw[thick] (4.5,0)--(6,1.5);
\draw[thick][dashed] (6,1.5)--(6.25,1.75);
\draw[thick] (6.5,0)--(7,.5);
\draw[thick][dashed] (7,.5)--(7.25,.75);

\end{tikzpicture}
\caption{Case 3}
\label{2m3}
\end{center}
\end{figure}

\begin{itemize}
\item For the left child ($\Sigma(A,B \cup \{v\})$):
\begin{itemize}
\item If \Thex~ = 2, we get a critical cell after adding four matching vertices to $A$. %\Thex~ decreases by 2.
\item If \Thex~ $\geq 3$, we add one matching vertex to $A$, decrease \Thex~ by one, and go to Case 4. 
\end{itemize}

\item For the right child ($\Sigma(A \cup \{v\},B \cup N(v))$):

\begin{itemize}
\item If \Thex~ $= 2$, we get a critical cell after adding four vertices to $A$, including $v$. %\Thex~ decreases by 2.
\item If \Thex~ $= 3$, we get a critical cell after adding five vertices to $A$, including $v$. %\Thex~ decreases by 3.
\item If \Thex~ $= 4$, we get a critical cell after adding six vertices to $A$, including $v$.% \Thex~ decreases by 4.
\item If \Thex~ $\geq 5$, we add three vertices to $A$, including $v$, and decrease \Thex~ by three before repeating Case 3.\\
\end{itemize}
\end{itemize}

\noindent {\bf Case 4:}

\begin{figure}[!htb] % magic from the internet to put the figure where it goes
\begin{center}
\begin{tikzpicture}[scale = .85]

\filldraw (.5,0) circle (0.1cm); 

\draw[thick] (-.5,2) -- (5.5,2);
\draw[thick][dashed] (5.5,2) -- (6,2);
\draw[thick] (.5,1) -- (6.5,1);
\draw[thick][dashed] (6.5,1) -- (7,1);
\draw[thick] (.5,0) -- (7.5,0);
\draw[thick][dashed] (7.5,0) -- (8,0);

\draw[thick] (-.5,2) -- (1.5,0);
\draw[thick] (1.5,2) -- (3.5,0);
\draw[thick] (3.5,2) -- (5.5,0);
\draw[thick] (5.5,2) -- (7.5,0);

\draw[thick] (0,1.5)--(.5,2);
\draw[thick] (.5,0)--(2.5,2);
\draw[thick] (2.5,0)--(4.5,2);
\draw[thick] (4.5,0)--(6,1.5);
\draw[thick][dashed] (6,1.5)--(6.25,1.75);
\draw[thick] (6.5,0)--(7,.5);
\draw[thick][dashed] (7,.5)--(7.25,.75);

\end{tikzpicture}
\caption{Case 4}
\label{2m4}
\end{center}
\end{figure}

\begin{itemize}
\item For the left child ($\Sigma(A,B \cup \{v\})$):
\begin{itemize}
\item We do not add anything to $A$ or change \Thex. Go back to Case 2.
\end{itemize}

\item For the right child ($\Sigma(A \cup \{v\},B \cup N(v))$):

\begin{itemize}
\item If \Thex~ $=2$, we add two vertices to $A$ including $v$, and are left with a hexagon and a triangle that share an edge. By Lemma~\ref{lem:snowman}, we get two critical cells. For each of the cells, $|A|$ increases by 4. % while \Thex~ decreases by 2. 
\item If \Thex~ $=3$, we add three vertices to $A$ including $v$, and are left with a hexagon and a triangle that share an edge. By Lemma~\ref{lem:snowman}, we get two critical cells. For each of the cells, $|A|$ increases by 5. % while \Thex~ decreases by 3.
\item If \Thex~ $\geq 4$, we only add $v$ to $A$, decrease \Thex~ by one, then go to Case 5.\\
\end{itemize}
\end{itemize}

\noindent {\bf Case 5:}

\begin{figure}[!htb] % magic from the internet to put the figure where it goes
\begin{center}
\begin{tikzpicture}[scale = .85]

\filldraw (0,1.5) circle (0.1cm); 

\draw[thick] (-.5,2) -- (5.5,2);
\draw[thick][dashed] (5.5,2) -- (6,2);
\draw[thick] (.5,1) -- (6.5,1);
\draw[thick][dashed] (6.5,1) -- (7,1);
\draw[thick] (2.5,0) -- (7.5,0);
\draw[thick][dashed] (7.5,0) -- (8,0);

\draw[thick] (-.5,2) -- (.5,1);
\draw[thick] (1.5,2) -- (3.5,0);
\draw[thick] (3.5,2) -- (5.5,0);
\draw[thick] (5.5,2) -- (7.5,0);

\draw[thick] (0,1.5)--(.5,2);
\draw[thick] (1.5,1)--(2.5,2);
\draw[thick] (2.5,0)--(4.5,2);
\draw[thick] (4.5,0)--(6,1.5);
\draw[thick][dashed] (6,1.5)--(6.25,1.75);
\draw[thick] (6.5,0)--(7,.5);
\draw[thick][dashed] (7,.5)--(7.25,.75);

\end{tikzpicture}
\caption{Case 5}
\label{2m5}
\end{center}
\end{figure}

\begin{itemize}
\item For the left child ($\Sigma(A,B \cup \{v\})$):

\begin{itemize}
\item If \Thex~ $=3$, we add three matching vertices to $A$, and are left with a hexagon and a triangle that share an edge. By Lemma~\ref{lem:snowman}, we get two critical cells. For each of the cells, $|A|$ increases by 5. % and \Thex~ decreases by 3.
\item If \Thex~ $=4$, we add four matching vertices to $A$, and are left with a hexagon and a triangle that share an edge. By Lemma~\ref{lem:snowman}, we get two critical cells. For each of the cells, $|A|$ increases by 6. % and \Thex~ decreases by 4.
\item If \Thex~ $=5$, we add five matching vertices to $A$, and are left with a hexagon and a triangle that share an edge. By Lemma~\ref{lem:snowman}, we get two critical cells. For each of the cells, $|A|$ increases by 7. % and \Thex~ decreases by 5.
\item If \Thex~ $\geq 6$, we add three matching vertices to $A$ and decrease \Thex~ by 3, then repeat Case 5. 

\end{itemize}

\item For the right child ($\Sigma(A \cup \{v\},B \cup N(v))$):

\begin{itemize}
\item We only add $v$ to $A$, decrease \Thex~ by one, and go to Case 6.\\
\end{itemize}
\end{itemize}

\noindent {\bf Case 6:}

\begin{figure}[!htb] % magic from the internet to put the figure where it goes
\begin{center}
\begin{tikzpicture}[scale = .85]

\filldraw (-.5,2) circle (0.1cm); 

\draw[thick] (-.5,2) -- (5.5,2);
\draw[thick][dashed] (5.5,2) -- (6,2);
\draw[thick] (-.5,1) -- (6.5,1);
\draw[thick][dashed] (6.5,1) -- (7,1);
\draw[thick] (.5,0) -- (7.5,0);
\draw[thick][dashed] (7.5,0) -- (8,0);

\draw[thick] (-.5,2) -- (1.5,0);
\draw[thick] (1.5,2) -- (3.5,0);
\draw[thick] (3.5,2) -- (5.5,0);
\draw[thick] (5.5,2) -- (7.5,0);

\draw[thick] (-.5,1)--(.5,2);
\draw[thick] (.5,0)--(2.5,2);
\draw[thick] (2.5,0)--(4.5,2);
\draw[thick] (4.5,0)--(6,1.5);
\draw[thick][dashed] (6,1.5)--(6.25,1.75);
\draw[thick] (6.5,0)--(7,.5);
\draw[thick][dashed] (7,.5)--(7.25,.75);

\end{tikzpicture}
\caption{Case 6}
\label{2m6}
\end{center}
\end{figure}

\begin{itemize}
\item For the left child ($\Sigma(A,B \cup \{v\})$):
\begin{itemize}
\item We do not add anything to $A$ or change \Thex. Go back to Case 3.

\end{itemize}

\item For the right child ($\Sigma(A \cup \{v\},B \cup N(v))$):

\begin{itemize}
\item If \Thex~ $= 2$, we get no critical cells. 
\item If \Thex~ $\geq 3$, we add three vertices to $A$, including $v$, decrease \Thex~ by two, then go back to Case 2.\\
\end{itemize}
\end{itemize}

%\jnote{I personally don't view this image as helpful but if we would like to keep it I think we should clean it up a little. Loop around 3 should be (3,3).} 
%\mnote{I did a basic change in the picture, so that it is at least correct (the loop around 3 is now fine), just in case we don't have time do it in a more fancy way!} 

%%%%%%%%%%%%%%%%%%%%%%%%%%%%%%%%%%%

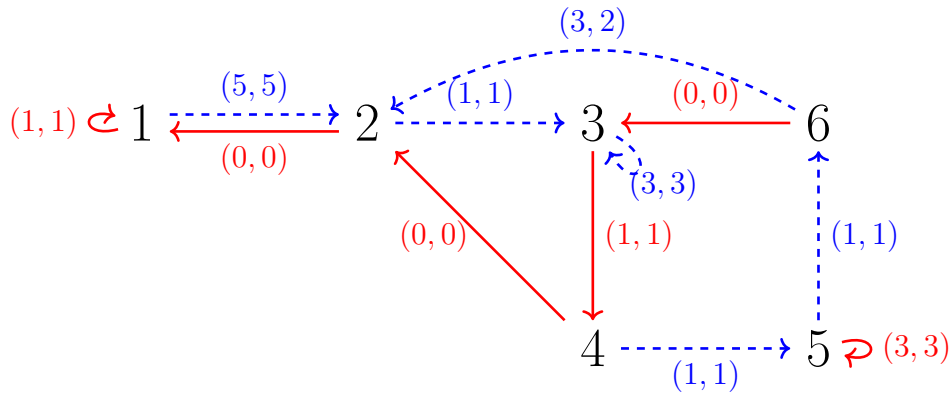
\begin{figure}[!hbt]
\begin{center}
\begin{tikzpicture}[scale = .75]

%\draw [-, line width = 1pt] (-5,-1)-- (-9,-1) -- (-7,2)--(-5,-1);

\node (1) at (0,0) {\LARGE $1$};
\node (2) at (4,0) {\LARGE $2$};
\node (3) at (8,0) {\LARGE $3$};
\node (6) at (12,0) {\LARGE $6$};

\node (4) at (8,-4) {\LARGE $4$};
\node (5) at (12,-4) {\LARGE $5$};

\draw [->, line width = 1pt, dashed, blue] (0.5,0.15) -- node [above] {$(5,5)$} (3.5, 0.15) ; 
\draw [->, line width = 1pt, red] (3.5,-0.15) -- node [below] {$(0,0)$} (0.5, -0.15); 

\draw [->, line width = 1pt, dashed, blue] (4.5,0) --node [above] {$(1,1)$} (7.5, 0);

\draw [->, line width = 1pt, red] (11.5,0) -- node [above] {$(0,0)$} (8.5, 0);

\draw [->, line width = 1pt, red] (8,-0.5) --node [right] {$(1,1)$} (8, -3.5);

\draw [->, line width = 1pt, dashed, blue] (12,-3.5) -- node [right] {$(1,1)$} (12, -0.5);

\draw [->, line width = 1pt, dashed, blue] (8.5,-4) -- node [below] {$(1,1)$} (11.5, -4);

\draw [->, line width = 1pt, red] (7.5,-3.5) -- node [left] {$(0,0)$} (4.5, -0.5);

\draw[->, line width = 1pt, dashed, blue] (6) to [bend right] node [above] {$(3,2)$} (2);

\draw[->, line width = 1pt, red] (1) edge [loop left] node {$(1,1)$} (1);

\draw (3) edge [->, line width = 1pt, out=330,in=300,looseness=8, dashed, blue] (3);

\draw[->, line width = 1pt, red] (5) edge [loop right] node {$(3,3)$} (5);

\node (3) at (9.25,-1.1) {\textcolor{blue}{$(3,3)$}};

\end{tikzpicture}
\caption{
In the proof of Theorem \ref{2by1bym}, 
there are six possible configurations, labeled 1 through 6 above. Choosing the appropriate splitting vertex produces two leaves: a left child and a right child. After split preparing each child, we arrive at new configurations.
The blue dashed lines are the right children while the red solid lines are the left children. The pair of values at each line are the increase to $|A|$ and the decrease to \Thex, respectively, from one configuration to the next. Note that the only time that they are not equal is when we go from 6 to 2. Here, we add 3 vertices to $A$ but only remove 2 hexagons.}
\label{fig:MTACases}
\end{center}
\end{figure}

%%%%%%%%%%%%%%%%%%%%%%%%%%%%%%%%%%%

\iffalse
\begin{figure}[!hbt]
\begin{center}
\includegraphics[scale = 0.3]{}
\caption{
In the proof of Theorem \ref{2by1bym}, 
there are six possible configurations, labeled 1 through 6 above. Choosing the appropriate splitting vertex produces two leaves: a left child and a right child. After split preparing each child, we arrive at new configurations.
The blue dashed lines are the right children while the red solid lines are the left children. The pair of values at each line are the increase to $|A|$ and the decrease to \Thex, respectively, from one configuration to the next. Note that the only time that they are not equal is when we go from 6 to 2. Here, we add 3 vertices to $A$ but only remove 2 hexagons.}
\label{fig:MTACases}
\end{center}
\end{figure}
\fi

\noindent \textbf{Analysis:} Figure~\ref{fig:MTACases} will aid in the analysis. Observe that in every case, $|A|$ increases at least as much as \Thex~decreases. Furthermore, whenever a critical cell is produced, $|A|$ increases  strictly more than \Thex~ decreases. Since \Thex~ $=2t$ at the start, the size of every critical cell is at least $2t+1$ and it follows that $d^{\min} = 2t$.

To find $d^{\max}$, we observe that we only increase $|A|$ more than we decrease \Thex~ without creating a critical cell in the right child of Case 6. The unique cycle containing this child is also made up of the right child of Case 2, the left child of Case 3, the right child of Case 4, and the right child of Case 5. Going once through this cycle increases $|A|$ by 7 and decreases \Thex~ by 6. It follows that if we start from Case 2 and proceed through several steps of the algorithm without creating a critical cell, the most we can increase $|A|$ is $\frac{7}{6}$ of the amount we decrease \Thex. To get from Case 1 to Case 2, we have to first increase $|A|$ by 5 and decrease \Thex~ by 5. This means that if we start from Case 1 and proceed through several steps of the algorithm without creating a critical cell, we increase $|A|$ by strictly less than $\frac76$ of the amount we decrease \Thex. When we do create a critical cell, the difference between the amount we increase $|A|$ and the amount we decrease \Thex~ is at most 2. Thus, the largest critical cells must be smaller than $\frac{7}{6}(2t) +2 = \frac{7t}{3} + 2$, and $d^{\max} = \frac{7t}{3} + 1$. 
\end{proof}

\begin{remark}
Corollary \ref{1by1bym} and Theorem \ref{2by1bym} show that for $1\times 1\times t$ and $2\times 1 \times t$ honeycomb graphs, the connectivity of the matching complex is at least one less than the number of hexagons in the graph. However, this is not true in general. Using homology tools in Sage, we found that $H_9$ of the matching complex of a $3 \times 2 \times 2$ honeycomb is $\mathbb{Z} \times \mathbb{Z}$. Because $H_9$ is nontrivial, the honeycomb is at most 8-connected, although it contains 10 hexagons.
\end{remark}

%%Using Sage, we found that the $H_9$ of the $3 \times 2 \times 1$ is $\mathcal{Z} \times \mathcal{Z}$ and $H_{10} = Z^{10}$. This shows that it is not true in general that a honeycomb graph with $k$ honeycombs is $(k-1)$-connected, since the $3 \times 2 \times 1$ honeycomb has 10 honeycombs.

\section{Matching Complexes of Trees}
\label{sec:trees}
%In this section, we use the tools that we developed in Section~\ref{operations} to determine the homotopy type of matching complexes of trees. We begin by showing that the matching complex of a forest is either contractible or homotopy equivalent to a wedge of spheres. We then proceed to study caterpillar graphs, giving explicit formulas for the homotopy type. 

In this section, we  determine the explicit homotopy type of matching complexes of particular types of trees. 
Marietti and Testa proved the following: 

\begin{theorem}\cite[Theorem 4.13]{Marietti_Testa_forests}
\label{thm:forest}
Let $G$ be a forest. Then $M(G)$ is either contractible or homotopy equivalent to a wedge of spheres. 
\end{theorem}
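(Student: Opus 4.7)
The plan is to induct on the number of edges of $G$, leaning on the graph operations from Section~\ref{operations}. As a preliminary reduction, if $G$ is disconnected, say $G=T_1\sqcup\cdots\sqcup T_r$ with each $T_i$ a tree, then Remark~\ref{rem:disjoint} gives $M(G)=M(T_1)\ast\cdots\ast M(T_r)$. Since the join of a contractible space with anything is contractible, and the join of wedges of spheres is again a wedge of spheres by Remark~\ref{rmk:topology}(2), it suffices to prove the statement when $G$ is a single tree $T$.

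For a tree $T$, I would induct on $|E(T)|$. The base case $|E(T)|=1$ gives $M(T)$ a single vertex, which is contractible. For the inductive step, assume $|E(T)|\geq 2$ and locate an independent $m$-leg as follows. Pick a longest path $v_0v_1\cdots v_k$ in $T$; necessarily $k\geq 2$. Every neighbor of $v_1$ other than $v_2$ must be a leaf, since extending through a non-leaf neighbor would produce a strictly longer path, contradicting maximality. Let $\bar{x}_1=v_0v_1,\bar{x}_2,\dots,\bar{x}_m$ be all the leaf edges at $v_1$, and set $\bar{y}=v_1v_2$. Then $\bar{x}_1,\dots,\bar{x}_m,\bar{y}$ form an independent $m$-leg of $T$ in the sense of Definition~\ref{def:mleg}, with $m\geq 1$.

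Now I would apply Lemma~\ref{lem:m1} when $m=1$ to obtain $M(T)\simeq S(M(G_2))$, or Lemma~\ref{lem:m} when $m\geq 2$ to obtain
\[
M(T)\;\simeq\;\Bigl(\bigvee_{m-1}S(M(G_1))\Bigr)\vee S(M(G_2)),
\]
where $G_1=\mathrm{del}_T(\bar{x}_1)$ and $G_2=\mathrm{del}_T(\bar{y})$. Both $G_1$ and $G_2$ are forests with strictly fewer edges than $T$: deleting $\bar{y}$ removes at least $\bar{y}$ itself along with the leaf edges at $v_1$, while deleting $\bar{x}_1$ removes $\bar{x}_1,\dots,\bar{x}_m,\bar{y}$. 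By the preliminary reduction combined with the inductive hypothesis, $M(G_1)$ and $M(G_2)$ are each either contractible or a wedge of spheres. The class of such spaces is closed under suspension, using Remark~\ref{rmk:topology}(3) in the wedge-of-spheres case and homotopy invariance of suspension in the contractible case, and it is also closed under wedge sum, since any contractible wedge summand is absorbed. Therefore $M(T)$ is contractible or a wedge of spheres, completing the induction.

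The main obstacle is really a bookkeeping one: carefully propagating the two possible outcomes through the suspension and wedge operations, and in particular handling degenerate situations where $G_1$ or $G_2$ has no edges at all (for example, when $T$ is a star $K_{1,n}$, both deletions strip all edges). In that case $M(G_i)$ consists only of the empty face, and one needs the suspension to be read as $S^0$, consistent with the cone convention used inside the proofs of Lemmas~\ref{lem:m1} and~\ref{lem:m} and with the direct computation $M(K_{1,n})\simeq\bigvee_{n-1}S^0$. Once this convention is fixed, every step of the recursion yields an object on the contractible-or-wedge-of-spheres list, and the induction closes.
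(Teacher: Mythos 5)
Your proposal is correct and takes essentially the same route as the paper, which cites Marietti--Testa for the statement and notes that it ``can be recovered inductively'' from Lemma~\ref{lem:m}: your longest-path argument producing an independent $m$-leg, followed by Lemma~\ref{lem:m1} or Lemma~\ref{lem:m} together with Remarks~\ref{rem:disjoint} and~\ref{rmk:topology}, is exactly that inductive recovery. Your handling of the degenerate case where a deletion has no edges (reading the suspension of the empty complex as $S^0$) is the right bookkeeping and is consistent with the base computations in the paper.
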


We build on this result by determining the explicit homotopy type for caterpillar graphs and proving a lower bound for the connectivity of perfect binary trees.

\subsection{Matching Complexes of Caterpillar Graphs}
\label{sec:caterpillar}

A \emph{caterpillar graph} is a tree in which every vertex is on a central path or only one edge away from the path (see Figure~\ref{fig:gencase}). By Theorem~\ref{thm:forest}, we know that the matching complex of caterpillar graphs is a wedge summand of spheres (or contractible). 
%Using the results from Section~\ref{operations}, we can determine the number of spheres in each dimension.
In \cite{Marietti_Testa_forests}, Marietta and Testa note that one can compute the number of spheres in each dimension recursively using \cite[Proposition 3.3]{Marietti_Testa_forests}. We found that in special cases nice formulas arise.

We give the explicit homotopy type for matching complexes of caterpillar graphs with each central vertex incident to at least one leg. Table~\ref{table:gencat} contains calculations for the number of spheres in each dimension for the homotopy type of these caterpillar graphs. While this table appears complex, the entries have a nice combinatorial interpretation.

To describe the combinatorial interpretation we define the following class of subsequences of $[n] := (1,2,\ldots,n)$, which will enumerate the number of spheres of each dimension.

\begin{definition}
Let $n, x \in \mathbb{N}$. Let
\begin{flushleft}
${\max} := \{(i_1,i_2,\ldots,i_x) \subseteq [n] \mid 1 \leq i_1 < i_2 <\cdots <i_x \leq n$ and after marking the selected $ x $ positions the remaining $ n-x  $ positions can be covered with $ \frac{n-x}{2} $ disjoint 2-blocks.\}
\end{flushleft}

When $n-x$ is odd, ${\max} = \emptyset$. When $n = 2k$ and $x = 0$, $A_0^{2k} = \{ \emptyset\}$, so $|A_0^{2k} | = 1.$ 
\end{definition}

\begin{example} (Examples of $A_{x}^{n}$)
\begin{itemize}
\item[(1)] When $x=n$, $A_n^n = \{(1,2,\ldots,n)\}$.
\item[(2)] 
In Figure \ref{fig:seqex1}, we illustrate two subsequences of $(1, 2, \ldots, 9)$: one that is an element of $A_{3}^{9}$ and one that is not. 
\end{itemize}
\end{example}

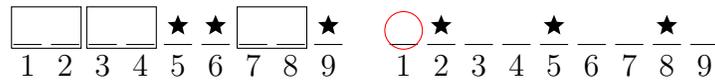
\begin{figure}[h!]

\begin{tikzpicture}[scale = 0.5]

\foreach \i in {1, 2, 3, 4, 5, 6, 7, 8, 9}{
\draw (0 + \i, 0) -- (0.75 + \i, 0);
\node[below] at (0.375 + \i, 0) {\i};}

\draw (0.95, -.1) -- (2.8, -.1)-- (2.8, 1) -- (0.95, 1) --(0.95, -.1) ;
\draw (2+ 0.95, -.1) -- (2+ 2.8, -.1)-- (2 +2.8, 1) -- (2+ 0.95, 1) --(2+ 0.95, -.1) ;
\draw (6+ 0.95, -.1) -- (6+ 2.8, -.1)-- (6 +2.8, 1) -- (6+ 0.95, 1) --(6+ 0.95, -.1) ;

 \draw (5.375, .5) node[star,star points = 5, star point ratio=2.25, fill=black, inner sep=1.3pt]  {};
  \draw (6.375, .5) node[star,star points = 5, star point ratio=2.25, fill=black, inner sep=1.3pt]  {};
    \draw (9.375, .5) node[star,star points = 5, star point ratio=2.25, fill=black, inner sep=1.3pt]  {};

   \foreach \i in {1, 2, 3, 4, 5, 6, 7, 8, 9}{
\draw (10 + 0 + \i, 0) -- (10 +0.75 + \i, 0);
\node[below] at (10+0.375 + \i, 0) {\i};} 
    \draw (12.375, .5) node[star,star points = 5, star point ratio=2.25, fill=black, inner sep=1.3pt]  {};
    \draw (15.375, .5) node[star,star points = 5, star point ratio=2.25, fill=black, inner sep=1.3pt]  {};
      \draw (18.375, .5) node[star,star points = 5, star point ratio=2.25, fill=black, inner sep=1.3pt]  {};

\draw[red] (11.375,0.375) circle (.5cm); 
    
\end{tikzpicture}

\caption{$(5,6,9) \in A_{3}^{9}$ but $(2, 5, 8) \notin A_{3}^{9}$}\label{fig:seqex1}
\end{figure}

\begin{remark}
When $n$ and $x$ have the same parity,
$$|A_x^n| = \binom{\frac{n+x}{2}}{\frac{n-x}{2}}.$$

\end{remark}

\iffalse
\begin{proof}

Since ${\max} = \emptyset$ when $n-x$ is odd, assume $n-x$ is even. When $n = 2k + 1$ and $x = 2\ell + 1$, $0 \leq \ell \leq k$, there are $2\ell + 1$ marked positions and $\dfrac{(2k + 1) - (2\ell +1)}{2} = k - \ell$ 2-blocks. Hence, there are $\frac{(2\ell + 1 + k -\ell)!}{(2\ell +1)!(k-\ell)!} = \binom{k + \ell + 1}{k-\ell}$ possible arrangements. So,
\[
{\max}  = \binom{k + \ell + 1}{k - \ell} = \binom{\frac{n+x}{2}}{\frac{n-x}{2}}.
\]
The case when $n = 2k$ and $x = 2\ell$ is analogous.
\end{proof}
\fi

\begin{definition}
For any choice of $n$ nonnegative numbers $t_1,t_2,\ldots,t_n \in \mathbb{N}$ and $x \in \mathbb{N}$ such that $n-x$ is even, define the sum
\[
M_x^n = \sum\limits_{(i_1,i_2,\ldots,i_x) \in {\max}}t_{i_1} t_{i_2} \cdots  t_{i_x}
\]
\end{definition}

\begin{example}(Examples of $M_x^n$).

\begin{enumerate}
\item When $x = n$, $A_n^n = \{(1,2,3,\dots,n)\}$, so $M_n^n = t_1 t_2 \cdots  t_n$.

\item When $n$ is odd and $x =1$, $A_1^{2k+1} = \{(1), (3), (5), \ldots, (2k+1)\}$, so $M_1^{2k + 1} = t_1 + t_3 + t_5 + \cdots + t_{2k+1}$.

\item For an explicit example, consider when $n=4$ and $x=2$. Then, $A_2^4 = \{(1,2), (1,4), (3,4)\}$ and $M_2^4 = t_1t_2 + t_1t_4 + t_3t_4$.
\end{enumerate}

\end{example}

%%%%%%%%%%%%%%%%%%%%Removed%%%%%%%%%%%%
\iffalse
\begin{figure}[!hbt]
\centering
\begin{tikzpicture}[thin,
  every node/.style={draw,circle, minimum size = 4 pt, inner sep=0pt},
  fsnode/.style={fill = black},
  ssnode/.style={fill=black},
  every fit/.style={ellipse,draw,inner sep=-2pt,text width=1.5cm}
]

% t_1 + 1 vertices
\begin{scope}[start chain=going below,node distance=7mm]
\foreach \i in {1,2,...,5}
  \node[fsnode,on chain] (f\i) {};
\end{scope}

% t_2 + 1 vertices
\begin{scope}[xshift=4cm,yshift=-0.5cm,start chain=going below,node distance=7mm]
\foreach \i in {6,7,...,9}
  \node[ssnode,on chain] (s\i){};
\end{scope}

% the set of t_1 + 1 vertices
\node [fit=(f1) (f5),label=above:$t_1 + 1$] {};

% the set of t_2 + 1 vertices
\node [fit=(s6) (s9),label=above:$t_2 + 1$] {};

%the lonely vertex
 \node[ssnode] at (-3, -2) {}; 

% the edges
\foreach \i in {1,2,...,5}
\foreach \j in {6,7,...,9}
\draw (f\i) -- (s\j);

\end{tikzpicture}
\caption{The matching complex of $G_2(m_1,m_2)$, a caterpillar graph of length 2 with $t_1 + 1$ legs at the first vertex and $t_2 +1$ legs at the second vertex. The homotopy type of $M(G_2(m_1,m_2))$ is $S^0 \vee \left[\bigvee\limits_{t_1t_2} S^1\right]$.}
\label{genfig1}
\end{figure}
\fi
%%%%%%%%%%%%%%%%%%%%%%%%%%%%%%%%%%%%

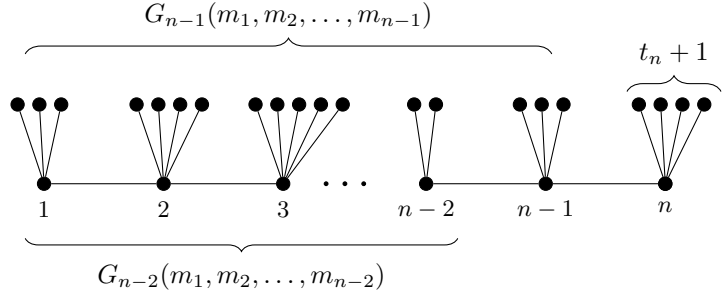
\begin{figure}

\begin{tikzpicture}[thin,
  %every node/.style={draw,circle},
  fsnode/.style={draw, circle, fill = black, minimum size = 5 pt, inner sep=0pt},
  ssnode/.style={draw, circle, fill=black, minimum size = 3 pt, inner sep=0pt},
  every fit/.style={ellipse,draw,inner sep=-2pt,text width=1.5cm}
]

\begin{scope}[start chain=going right,node distance=14mm]
\foreach \i in {1,2,3}
  \node[fsnode,on chain,label=below:\scriptsize{$\i$}] (f\i) {};
\end{scope}

\begin{scope}[start chain=going right,node distance=1mm,yshift = 30pt, xshift = -10pt]
\foreach \i in {1,2,3}
  \node[fsnode,on chain] (g\i) {};
\end{scope}

\begin{scope}[start chain=going right,node distance=1mm,yshift = 30pt, xshift = 35pt]
\foreach \i in {1,2,3,4}
  \node[fsnode,on chain] (h\i) {};
\end{scope}

\begin{scope}[start chain=going right,node distance=1mm,yshift = 30pt, xshift = 80pt]
\foreach \i in {1,2,3, 4, 5}
  \node[fsnode,on chain] (n\i) {};
\end{scope}

\begin{scope}[start chain=going right,node distance=1mm,yshift = 30pt, xshift = 140pt]
\foreach \i in {1,2}
  \node[fsnode,on chain] (p\i) {};
\end{scope}

\begin{scope}[start chain=going right,node distance=1mm,yshift = 30pt, xshift = 180pt]
\foreach \i in {1,2,3}
  \node[fsnode,on chain] (k\i) {};
\end{scope}

\begin{scope}[start chain=going right,node distance=1mm,yshift = 30pt, xshift = 225pt]
\foreach \i in {1,2,3, 4}
  \node[fsnode,on chain] (m\i) {};
\end{scope}

\begin{scope}[start chain=going left,node distance=14mm, xshift = 235pt]
\foreach \i in {0, 1, 2}
  \node[fsnode,on chain] (s\i) {};
\end{scope}

\begin{scope}[start chain=going left,node distance=14mm, xshift = 235pt]
\foreach \i in {0}
  \node[fsnode,on chain, label=below:\scriptsize{$n$}] (s\i) {};
\end{scope}

\node [below = of s2, label=above:\scriptsize{$n-2$}, yshift = 14pt] {};

\node [below = of s1, label=above:\scriptsize{$n-1$}, yshift = 14pt] {};

\filldraw (3.75, 0) circle (0.025cm);
\filldraw (4.25, 0) circle (0.025cm);
\filldraw (4, 0) circle (0.025cm);

% the edges
\foreach \i in {1,2,3}
\draw (f1) -- (g\i);
\foreach \i in {1,2,3, 4}
\draw (f2) -- (h\i);
\foreach \i in {1,2,3, 4, 5}
\draw (f3) -- (n\i);

\foreach \i in {1,2}
\draw (s2) -- (p\i);

\foreach \i in {1,2,3}
\draw (s1) -- (k\i);

\foreach \i in {1,2,3,4}
\draw (s0) -- (m\i);

\draw (f1) -- (f2);
\draw (f2) -- (f3);
\draw (s0) -- (s1);
\draw (s1) -- (s2);

\draw [decorate,decoration={brace,amplitude=5pt,mirror,raise=4ex}]
  (-.25,0) -- (5.5,0) node[midway,yshift=-3em]{\footnotesize{$G_{n-2}(m_1,m_2,\ldots, m_{n-2})$}};
  
  \draw [decorate,decoration={brace,amplitude=6pt,raise=4ex}]
    (-.25,1) -- (6.75,1) node[midway,yshift=+3em]{\footnotesize{$G_{n-1}(m_1,m_2,\ldots, m_{n-1})$}};

      \draw [decorate,decoration={brace,amplitude=6pt,raise=4ex}]
    (7.75,0.5) -- (9,0.5) node[midway,yshift=+3em]{\footnotesize{$t_{n}+1$}};
    
   % \node[left] at (8.25, 0.5) {\footnotesize{$\bar{a}_1$}};
    %    \node[right] at (8.35, 0.5) {\footnotesize{$\bar{a}_m$}};
  % \node[below] at (7.5, 0) {\footnotesize{$\bar{y}$}};
  
\end{tikzpicture}

\caption{A caterpillar graph of length $n$.}\label{fig:gencase}
\end{figure}

\begin{lemma}
\label{lem:Mrecursion}

The polynomials $M_{x}^{n}(t_1, \ldots, t_n)$ satisfy the following relations for $1 \leq  \ell \leq k$:
\begin{enumerate}
\item[(a)] $M_{2\ell+1}^{2k+1} = M_{2\ell}^{2k}t_{2k+1} + M_{2\ell+1}^{2k -1}$
\item[(b)] $M_{2\ell}^{2k+2}= M_{2\ell-1}^{2k+1}t_{2k+2} + M_{2\ell}^{2k}$
\end{enumerate}
\end{lemma}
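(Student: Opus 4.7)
The plan is to prove both recursions by the same combinatorial argument: for each relation, partition the indexing set $A_x^n$ according to whether its largest element is the terminal index $n$.

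First I would reformulate the defining condition of $A_x^n$ in terms of gaps. A subsequence $(i_1,\ldots,i_x)$ lies in $A_x^n$ if and only if every gap $i_1-1$, $i_2-i_1-1$, $\ldots$, $i_x-i_{x-1}-1$, and $n-i_x$ is even (so that each can be tiled by disjoint 2-blocks). In particular, $i_x$ must have the same parity as $n$. This reformulation is the only real observation needed; the rest is bookkeeping.

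For relation (a), I would split $A_{2\ell+1}^{2k+1}$ into two disjoint subsets depending on whether $i_{2\ell+1}=2k+1$ or $i_{2\ell+1}<2k+1$. In the first subset, the last gap $n-i_{2\ell+1}$ is zero and the map $(i_1,\ldots,i_{2\ell+1})\mapsto(i_1,\ldots,i_{2\ell})$ is a bijection onto $A_{2\ell}^{2k}$, since the condition $2k-i_{2\ell}=i_{2\ell+1}-i_{2\ell}-1$ is even is inherited. Under this bijection the product $t_{i_1}\cdots t_{i_{2\ell+1}}$ factors as $(t_{i_1}\cdots t_{i_{2\ell}})\,t_{2k+1}$, so the total contribution is $M_{2\ell}^{2k}\,t_{2k+1}$. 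In the second subset, the parity constraint forces $i_{2\ell+1}\leq 2k-1$, and the condition ``$(2k+1)-i_{2\ell+1}$ is even'' is equivalent to ``$(2k-1)-i_{2\ell+1}$ is even,'' so the identity map is a bijection onto $A_{2\ell+1}^{2k-1}$, contributing $M_{2\ell+1}^{2k-1}$. Summing yields (a).

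Relation (b) follows by the same argument applied to $A_{2\ell}^{2k+2}$, split by whether $i_{2\ell}=2k+2$. The ``equal'' case contributes $M_{2\ell-1}^{2k+1}\,t_{2k+2}$ via the restriction map to $A_{2\ell-1}^{2k+1}$, and the ``strictly less'' case contributes $M_{2\ell}^{2k}$ since the parity constraint forces $i_{2\ell}\leq 2k$. The only subtlety in the whole proof is keeping the parity conditions aligned when passing between $n$, $n-1$, and $n-2$; there is no obstacle beyond this bookkeeping.
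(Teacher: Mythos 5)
Your proposal is correct and follows essentially the same route as the paper: both proofs partition $A_x^n$ according to whether the largest marked index equals $n$, identifying the two parts with $A_{x-1}^{n-1}$ (contributing the factor $t_n$) and $A_x^{n-2}$ respectively. Your explicit reformulation of the tiling condition as ``all gaps even'' is just a slightly more detailed justification of the same bijections the paper uses.
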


\begin{proof}
We prove (a).
By definition, 
$M_{2\ell+1}^{2k+1}
= \sum\limits_{(i_1,\ldots, i_{2 \ell + 1}) \in A_{2\ell+1}^{2k+1}}t_{i_1}  t_{i_2} \cdots t_{i_{2 \ell + 1}}$.
We consider two cases.

\begin{enumerate}
\item If $i_{2\ell+1} = 2k+1$ then from the first $2k$ positions we choose $2\ell$ positions that we mark so that the unmarked positions can be covered with $\frac{2k-2\ell}{2}$ disjoint 2-blocks. The terms in $M_{2\ell +1}^{2k+1}$ that correspond to these subsequences in $A_{2\ell +1}^{2k+1}$ are exactly the terms in $M_{2\ell}^{2k}t_{2k+1}$.

\item If $i_{2\ell+1} < 2k+1$, the last position is not marked so positions $2k$ and $2k+1$ must be covered with one 2-block. We choose $2\ell +1$ positions to mark among the first $2k-1$ positions, respecting the rule. The sum corresponding to these subsequences is $M_{2\ell +1}^{2k-1}$.
\end{enumerate}
Combining (1) and (2) proves that $M_{2\ell+1}^{2k+1} = M_{2\ell}^{2k}t_{2k+1} + M_{2\ell+1}^{2k -1}$.  The proof of (b) is analogous. 
\end{proof}

\begin{theorem}
\label{thm:gencat}
Let $G_n = G_n(m_1, \ldots, m_n)$ be a caterpillar graph with $m_i$ legs, $m_i \geq 1$, at each vertex $i$ on the central path of length $n \geq 1$. Then
%, for $k \in \mathbb{N}$, 
the homotopy type of $M(G_n)$ is given by:  

\begin{equation*}
    M(G_n) \simeq \begin{cases}
        \bigvee\limits_{\ell=0}^k \bigvee\limits_{M_{2\ell}^{2k}}S^{k-1+\ell} & \text{if } n = 2k \\
        \bigvee\limits_{\ell=0}^{k} \bigvee\limits_{M_{2\ell +1}^{2k+1}}S^{k+\ell} & \text{if } n = 2k+1
        \end{cases}
\end{equation*} 
where all sums $M_x^n$ are $M_x^n(t_1,t_2,\ldots,t_n)$ with $t_i := m_i-1$. 
\end{theorem}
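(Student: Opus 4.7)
The plan is to prove this by strong induction on $n$, using the leg-peeling Lemmas \ref{lem:m1} and \ref{lem:m} to relate $M(G_n)$ to matching complexes of shorter caterpillars, and then invoking the recurrences of Lemma \ref{lem:Mrecursion} to identify the sphere counts. The base cases are $n=1$ and $n=2$. For $n=1$, $G_1$ is a star with $m_1$ edges, so $M(G_1)$ is $m_1$ isolated points, namely $\bigvee_{t_1}S^0$, in agreement with $M_1^1 = t_1$. For $n=2$, Figure \ref{genfig1} shows directly that $M(G_2) \simeq S^0 \vee \bigvee_{t_1 t_2} S^1$, in agreement with $M_0^2 = 1$ and $M_2^2 = t_1 t_2$.

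For the inductive step with $n \geq 3$, the key observation is that the legs $\bar{a}_1, \ldots, \bar{a}_{m_n}$ at the terminal spine vertex $n$ together with the spine edge $\bar{y}$ from $n-1$ to $n$ form an independent $m_n$-leg in the sense of Definition \ref{def:mleg}. One verifies that $\text{del}_{G_n}(\bar{a}_1)$ has the same matching complex as $G_{n-1}(m_1, \ldots, m_{n-1})$ and $\text{del}_{G_n}(\bar{y})$ has the same matching complex as $G_{n-2}(m_1, \ldots, m_{n-2})$ (the isolated vertices that appear after deletion do not contribute edges to the matching complex). Applying Lemma \ref{lem:m1} when $m_n = 1$ and Lemma \ref{lem:m} when $m_n \geq 2$, both cases give the uniform identity
$$M(G_n) \simeq \left[\bigvee_{t_n} S(M(G_{n-1}))\right] \vee S(M(G_{n-2})),$$
where the empty wedge (when $t_n = 0$) contributes nothing.

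Now apply the inductive hypothesis to each summand on the right, and use Remark \ref{rmk:topology}(3) to distribute suspension across wedges of spheres, raising each sphere dimension by one. Splitting by the parity of $n$ and collecting the coefficient of each $S^d$, one finds for $n = 2k+1$ that the number of $S^{k+\ell}$-summands equals $t_{2k+1} M_{2\ell}^{2k} + M_{2\ell+1}^{2k-1}$, which by Lemma \ref{lem:Mrecursion}(a) equals $M_{2\ell+1}^{2k+1}$; and for $n = 2k$ the number of $S^{k-1+\ell}$-summands equals $t_{2k} M_{2\ell-1}^{2k-1} + M_{2\ell}^{2k-2}$, which by Lemma \ref{lem:Mrecursion}(b) equals $M_{2\ell}^{2k}$, matching the claim. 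The main technical point is to handle the extremal indices $\ell = 0$ and $\ell = k$: one adopts the conventions $M_x^n = 0$ when $x > n$ and $M_0^n = 1$, so that the boundary terms of the recurrences line up cleanly (e.g., $M_{2k+1}^{2k-1} = 0$ collapses the top-dimensional count to $t_{2k+1} M_{2k}^{2k}$, and $M_0^{2k-2} = 1$ accounts for the unique $S^{k-1}$ at the bottom). Beyond these bookkeeping conventions, the argument is a direct match of coefficients.
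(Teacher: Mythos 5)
Your proposal is correct and follows essentially the same route as the paper: induction from the $n=1,2$ base cases, the leg-peeling identity $M(G_n)\simeq\bigl[\bigvee_{t_n}S(M(G_{n-1}))\bigr]\vee S(M(G_{n-2}))$ from Lemma~\ref{lem:m}, distribution of suspension over wedges via Remark~\ref{rmk:topology}, and coefficient matching through Lemma~\ref{lem:Mrecursion} with the boundary terms $\ell=0,k$ handled directly. Your explicit appeal to Lemma~\ref{lem:m1} when $m_n=1$ (where Lemma~\ref{lem:m} as stated requires $m\geq 2$) is a small point of extra care that the paper's proof leaves implicit, but it does not change the argument.
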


\begin{proof}
We proceed by induction, using $G_1$ and $G_2$ as our base cases. When $n = 1, k = 0$ and $M_1^1 = t_1$. Notice that $M(G_1) = [m_1] \simeq \bigvee\limits_{m_1-1}S^0 = \bigvee\limits_{t_1}S^0$ as expected. 
When $n = 2, k=1,$ and $M_2^{2}(t_1,t_2) = t_1t_2$. The matching complex of $G_2$ is a 1-dimensional complex consisting of a disjoint point and a bipartite graph with the shores of $t_1 + 1$ and $t_2 + 1$ vertices, respectively. Therefore, $M(G_2) \simeq S^0 \vee \left[\bigvee\limits_{t_1t_2} S^1\right]$.

Assume the theorem holds for $G_1,G_2,\ldots,G_{2(k-1)+1}, G_{2k}$. We show that it holds for $G_{2k+1}$ and $G_{2k+2}$ for $k \geq 1$. Applying Lemma~\ref{lem:m}, we find that
\[
M(G_n) \simeq \left[\bigvee\limits_{t_n}S(M(G_{n-1}))\right] \vee S(M(G_{n-2}))
\]
for $n \geq 3$ as shown in Figure \ref{fig:gencase}.
Next, we use the properties of suspensions of wedges of spheres from Remark~\ref{rmk:topology} to see that:

\begin{eqnarray*}
M(G_{2k+1}) &\simeq& \left[ \bigvee\limits_{t_{2k+1}}S(M(G_{2k})) \right] \vee S(M(G_{2(k-1) +1}))\\
&\simeq& \left[ \bigvee\limits_{t_{2k+1}}S \left[ \bigvee\limits_{\ell=0}^{k} \bigvee\limits_{M_{2\ell}^{2k}}S^{k-1 + \ell} \right] \right]\vee S \left[ \bigvee_{\ell=0}^{k-1} \bigvee\limits_{M_{2\ell +1}^{2(k-1)+1}}S^{k-1 + \ell} \right]\\
&\simeq& \left[ \bigvee\limits_{t_{2k+1}}\bigvee\limits_{\ell=0}^{k} \bigvee\limits_{M_{2\ell}^{2k}}S(S^{k-1 + \ell}) \right] \vee \left[ \bigvee_{\ell=0}^{k-1} \bigvee\limits_{M_{2\ell +1}^{2(k-1)+1}}S(S^{k-1 + \ell})\right]\\
&=& \left[\bigvee\limits_{\ell=0}^{k-1} \bigvee\limits_{M_{2\ell}^{2k}t_{2k+1} + M_{2\ell+1}^{2(k-1) +1}} S^{k+\ell} \right] \vee \left[\bigvee\limits_{M_{2k}^{2k}t_{2k+1}}S^{2k} \right]
\end{eqnarray*}
 Since by Lemma \ref{lem:Mrecursion}, $M_{2\ell}^{2k}t_{2k+1} + M_{2\ell+1}^{2k -1} = M_{2\ell+1}^{2k+1}$, and
 $M_{2k}^{2k}t_{2k+1} = t_1t_2 \cdots t_{2k}t_{2k+1} = M_{2k+1}^{2k+1}$, the statement holds for $M(G_{2k+1})$.

For $n=2k+2$ the argument is similar. %follows similarly
%%%%%%%%%%%%%%%%%%%%%%Removed%%%%%%%%%%%%%%
\iffalse
\begin{eqnarray*}
M(G_{2k+2}) &\simeq& \left[ \bigvee\limits_{t_{2k+2}} S(M(G_{2k+1})) \right] \vee S(M(G_{2k}))\\
&\simeq& \left[ \bigvee\limits_{t_{2k+2}}S \left[ \bigvee_{\ell=0}^{k} \bigvee\limits_{M_{2\ell+1}^{2k +1}}S^{k+l} \right] \right] \vee S \left[ \bigvee\limits_{\ell=0}^{k} \bigvee\limits_{M_{2\ell}^{2k}}S^{k-1+\ell} \right]\\
&=& \left[\bigvee_{\ell=0}^{k} \bigvee\limits_{M_{2\ell+1}^{2k+1}t_{2k+2}} S^{k+\ell +1} \right] \vee \left[ \bigvee\limits_{\ell=0}^{k} \bigvee\limits_{M_{2\ell}^{2k}} S^{k+\ell} \right]\\
&=& \bigvee\limits_{M_0^{2k}} S^k \vee \bigvee\limits_{M_{2k +1}^{2k+1}t_{2k+2}}S^{2k+1} \vee \bigg[ \bigvee\limits_{\ell=0}^{k-1} \bigvee\limits_{M_{2\ell +1}^{2k+1}t_{2k+2}} S^{k + \ell +1} \vee \bigvee\limits_{\ell = 1}^{k} \bigvee\limits_{M_{2\ell}^{2k}} S^{k+\ell}\bigg]\\
&=& S^k \vee \bigvee\limits_{M_{2k +1}^{2k+1}t_{2k+2}}S^{2k+1} \vee \bigg[ \bigvee\limits_{\ell = 1}^{k} \bigvee\limits_{M_{2\ell -1}^{2k +1}t_{2k +2} + M_{2\ell}^{2k}}S^{k+\ell}\bigg]\\
& =& S^k \vee \left[ \bigvee\limits_{\ell = 1}^{k} \bigvee\limits_{M_{2\ell}^{2k +2}}S^{k +\ell} \right] \vee \bigvee\limits_{t_1\cdots t_{2k+1}t_{2k+2}} S^{2k +1}\\
&=& \bigvee\limits_{\ell=0}^{k +1} \bigvee\limits_{M_{2\ell}^{2(k +1)}}S^{k+\ell}, 
\end{eqnarray*}
\fi
%%%%%%%%%%%%%%%%%%%%%%%%%%%%%%%%%%%%%%
which completes the proof. %proves the case for $M(G_{2k+2})$. 

\end{proof}

\begin{remark}
After our original preprint was posted, Singh~\cite{singh_forest} generalized Theorem~\ref{thm:gencat} to determine the homotopy type of bounded degree complexes of the graphs $G_n$. There is also an alternative proof of Theorem \ref{thm:gencat} 
that uses inflated simplicial complexes and Theorem 6.2 from \cite{Bjorner2}. This proof will appear in a forthcoming note by the first author. 

%The main idea is to notice that the matching complex of $G_n(m_1, \ldots, m_n)$ can be considered as the $m$-inflation $\Delta_m$ of the matching complex $\Delta = M(G^p_n)$, where $G^p_n$ is the perfect $1$-caterpillar graph. This $m$-inflation provides insight into the combinatorial interpretation of the polynomials $M_x^n$.

%, and every edge on the main path of $G^p_n$ has multiplicity 1, while edges not belonging to the main path have multiplicities $m_1, m_2, \ldots, m_n$. Theorem 6.2 from \cite{Bjorner2} gives the homotopy type of  $\Delta_m$ in terms of links of simplices in $M(G^p_n)$. We omit the proof here. However, we want to mention that combinatorial interpretation naturally arises after detailed analysis of possible non-contractible links, by using Remark \ref{rem:onecat}.  

\end{remark}

In the special case where the caterpillar graph is {\em perfect}, the homotopy type is especially nice. 

\begin{definition}
A \emph{perfect $m$-caterpillar of length $n$} is a caterpillar graph with $m$ legs at each vertex on the central path of $n$ vertices (see %Figure~\ref{fig:mcatproofind}). 
\end{definition}

\begin{corollary} \label{thm:1cat}
For $m \geq 2$, let $G_n^p$ be a perfect $m$-caterpillar graph of length $n \geq 1$. Then the homotopy type of $M(G_n^p)$ is given by: 
\begin{equation*}
%\label{eq:Tn}
    M(G_n^p) \simeq 
    \begin{cases}
        \bigvee\limits_{t=0}^{k} \bigvee\limits_{\alpha_t} S^{k-1+t} & \text{if } n = 2k \\
        \bigvee\limits_{t=0}^k \bigvee\limits_{\beta_t} S^{k+t} & \text{if } n = 2k+1
       \end{cases}
\end{equation*} 
where $\alpha_t = \binom{k+t}{k-t}(m-1)^{2t}$ and $\beta_t = \binom{k+1+t}{k-t}(m-1)^{2t+1}$.
\end{corollary}

Notice that the number of spheres of dimension $d$ in the homotopy type of perfect 2-caterpillars are counted by binomial coefficients.

%\hnote{The following corollary used to be a remark. I commented out the line that said ``The proof is the same as the proof of Theorem~\ref{thm:1cat}.''}
\begin{corollary}
\label{rem:onecat}
The homotopy type of a perfect $1$-caterpillar graph of length $n \geq 1$ is
\begin{equation*}
    M(G_n^p) \simeq 
    \begin{cases}
        S^{k-1} & \text{if } n = 2k \\
       \text{pt} & \text{if } n = 2k+1
       \end{cases}
\end{equation*} 
%The proof is the same as the proof of Theorem~\ref{thm:1cat}. 
\end{corollary}

\subsection{The homotopy type for general caterpillar graphs}

Thus far, we have determined the explicit homotopy type of caterpillar graphs $G_n = G_n(m_1,\ldots,m_n)$ in the case where $m_i \geq 1$ for all $i$.
For arbitrary caterpillar graphs that may have vertices on the central path without any legs, there is no obvious formula that gives the number of spheres in each dimension. 
In this section, we provide a general procedure that inductively constructs the explicit homotopy type of $M(G_n(m_1,\ldots,m_n))$ where $m_i \geq 0$. 
More precisely, we have the following theorem.

\begin{theorem}
\label{thm:reallygencat}
Let $G_n = G_n(m_1,\ldots,m_n)$ be a caterpillar graph on a central path of $n$ vertices such that the $i$th vertex is a $m_i$-leg, where $n \geq 3$, $m_i \geq 0$ and $m_1 \geq 1$. Let $t_i = m_i -1$.  Let $A_{n, d}$ denote the number of spheres of dimension $d$ in the homotopy type of $M(G_{n})$. Then 

$$A_{n+1,d} = 
\begin{cases}
t_{n+1} A_{n, d-1} + A_{n-1, d-1} & \text{if } m_{n+1} \geq 1 \\
A_{n, d} + A_{n-1, d-1} & \text{if } m_{n+1} = 0 \text{ and } m_n \geq 1 \\
A_{n-2,d-1} & \text{if } m_{n+1} = m_{n} = 0
\end{cases}.
$$
\end{theorem}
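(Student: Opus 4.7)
The plan is to derive the three-part recursion directly from Lemma~\ref{lem:m} and Lemma~\ref{lem:m1}, by choosing in each case an appropriate independent leg in $G_{n+1}$ and computing the two deletion graphs explicitly. The sphere count in dimension $d$ then follows from the suspension identity $S\bigl(\bigvee_i S^{d_i}\bigr) \simeq \bigvee_i S^{d_i+1}$ of Remark~\ref{rmk:topology}. Throughout I abbreviate $G_k := G_k(m_1,\ldots,m_k)$.

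In Case~1, where $m_{n+1} \geq 1$, I take $\bar y$ to be the edge from vertex $n$ to vertex $n+1$ and $\bar x_1,\ldots,\bar x_{m_{n+1}}$ to be the legs at $n+1$; these form an independent $m_{n+1}$-leg. Deleting $\bar x_1$ kills the $m_{n+1}$ pendant legs at $n+1$ together with $\bar y$, producing $G_n$; deleting $\bar y$ additionally kills the $m_n$ legs at $n$ and the edge from $n-1$ to $n$, producing $G_{n-1}$. Applying Lemma~\ref{lem:m} when $m_{n+1} \geq 2$ (or Lemma~\ref{lem:m1} when $m_{n+1}=1$, which is consistent since $t_{n+1}=0$ in that case) gives
\[
M(G_{n+1}) \simeq \Bigl[\bigvee_{t_{n+1}} S(M(G_n))\Bigr] \vee S(M(G_{n-1})),
\]
and counting spheres in dimension $d$ yields the first part of the recursion.

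In Case~2, where $m_{n+1}=0$ and $m_n \geq 1$, the edge from $n$ to $n+1$ is itself a leaf edge because $n+1$ has degree one; bundling this leaf edge together with the $m_n$ legs at $n$ produces an independent $(m_n+1)$-leg whose $\bar y$ is the edge from $n-1$ to $n$. The two deletions give $G_1 = G_{n-1}$ and $G_2 = G_{n-2}$, so Lemma~\ref{lem:m} (applicable since $m_n+1 \geq 2$) yields the preliminary identity $A_{n+1,d} = m_n A_{n-1,d-1} + A_{n-2,d-1}$. To recognize this as the stated $A_{n,d}+A_{n-1,d-1}$, I invoke the Case~1 identity applied to $G_n$ itself (valid because $m_n \geq 1$ and the same deletion argument goes through at level $n$), namely $A_{n,d}=t_n A_{n-1,d-1}+A_{n-2,d-1}$; adding $A_{n-1,d-1}$ to both sides collapses the right-hand side to $m_n A_{n-1,d-1}+A_{n-2,d-1}$, as required. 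Finally, in Case~3, where $m_n = m_{n+1}=0$, the same leaf edge from $n$ to $n+1$ forms an independent $1$-leg with $\bar y = $ edge from $n-1$ to $n$; the deletion of $\bar y$ produces $G_{n-2}$, so Lemma~\ref{lem:m1} gives $M(G_{n+1}) \simeq S(M(G_{n-2}))$ and hence $A_{n+1,d}=A_{n-2,d-1}$.

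The only non-routine part of the argument is the observation in Case~2 that the pendant edge to the empty vertex $n+1$ can be absorbed into the leg structure at vertex $n$, enlarging an $m_n$-leg into an $(m_n+1)$-leg; this, combined with the need to re-express the raw output $m_n A_{n-1,d-1}+A_{n-2,d-1}$ via the Case~1 identity in order to match the stated form $A_{n,d}+A_{n-1,d-1}$, is the main subtlety. Every other step reduces to determining which edges of $G_{n+1}$ survive each deletion, a routine check from the definition of $G_k$.
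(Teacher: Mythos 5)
Your proposal is correct and takes essentially the same route as the paper: Case 1 is the direct application of Lemma~\ref{lem:m} (with the degenerate $m_{n+1}=1$ situation consistent via Lemma~\ref{lem:m1}), Case 3 is Lemma~\ref{lem:m1} applied to the independent $1$-leg, and Case 2 rests on the same key observation that $G_{n+1}(m_1,\ldots,m_n,0)=G_n(m_1,\ldots,m_{n-1},m_n+1)$, i.e.\ absorbing the pendant edge into an $(m_n+1)$-leg. The only cosmetic difference is in Case 2: the paper recognizes $\bigl[\bigvee_{m_n-1}S(M(G_{n-1}))\bigr]\vee S(M(G_{n-2}))$ as $M(G_n)$ by a second application of Lemma~\ref{lem:m} at the level of homotopy equivalences, whereas you carry out the equivalent bookkeeping numerically by invoking the Case 1 identity $A_{n,d}=t_nA_{n-1,d-1}+A_{n-2,d-1}$; the two manipulations are interchangeable.
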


%\hnote{We could shorten the proof to the following:}

\begin{proof}
In each case, the recursion follows from Lemma~\ref{lem:m}.
\end{proof}

Using Theorem~\ref{thm:reallygencat} and the following base cases, we
can obtain the homotopy type of any caterpillar graph:
\begin{align*} 
&M(G_1(m_1)) \simeq \underset{t_1}{\vee} S^0, & M(G_2(m_1,0)) = M(G_1(m_1+1)) \simeq \underset{t_1+1}{\vee} S^0,\\
&M(G_2(m_1,m_2)) \simeq S^0 \vee (\underset{t_1t_2}{\vee}S^1), &M(G_3(m_1,0,m_3)) \simeq \underset{t_1 + t_3 + t_1t_3}{\vee} S^1,
\end{align*}
where recall that $t_i = m_i - 1$.

Although there is no known nice formula for general caterpillars,
for some families of caterpillar graphs with a pattern of where the zero leg vertices occur, the number of spheres in each dimension does have a nice combinatorial description. 

For example, the matching complexes of caterpillar graphs with the property that every other vertex on the central path has zero legs (see Figure \ref{fig:nicewedge}) are homotopy equivalent to a wedge of spheres in a single dimension. Before we prove this using Theorem~\ref{thm:reallygencat}, we will define the following sum that enumerates the number of spheres in the wedge.

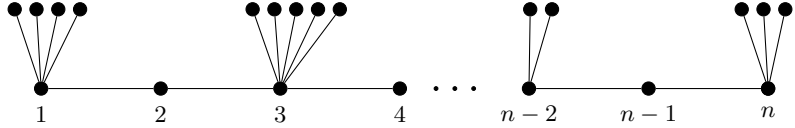
\begin{figure}[htb]

\begin{tikzpicture}[thin,
  %every node/.style={draw,circle},
  fsnode/.style={draw, circle, fill = black, minimum size = 5 pt, inner sep=0pt},
  ssnode/.style={draw, circle, fill=black, minimum size = 3 pt, inner sep=0pt},
  every fit/.style={ellipse,draw,inner sep=-2pt,text width=1.5cm}
]

\begin{scope}[start chain=going right,node distance=14mm]
\foreach \i in {1,2,3, 4}
  \node[fsnode,on chain,label=below:\scriptsize{$\i$}] (f\i) {};
\end{scope}

\begin{scope}[start chain=going right,node distance=1mm,yshift = 30pt, xshift = -10pt]
\foreach \i in {1,2,3, 4}
  \node[fsnode,on chain] (g\i) {};
\end{scope}

%\begin{scope}[start chain=going right,node distance=1mm,yshift = 30pt, xshift = 35pt]
%\foreach \i in {1,2,3,4}
%  \node[fsnode,on chain] (h\i) {};
%\end{scope}

\begin{scope}[start chain=going right,node distance=1mm,yshift = 30pt, xshift = 80pt]
\foreach \i in {1,2,3, 4, 5}
  \node[fsnode,on chain] (n\i) {};
\end{scope}

\begin{scope}[start chain=going right,node distance=1mm,yshift = 30pt, xshift = 185pt]
\foreach \i in {1,2}
  \node[fsnode,on chain] (p\i) {};
\end{scope}

%\begin{scope}[start chain=going right,node distance=1mm,yshift = 30pt, xshift = 180pt]
%\foreach \i in {1,2,3}
 % \node[fsnode,on chain] (k\i) {};
%\end{scope}

\begin{scope}[start chain=going right,node distance=1mm,yshift = 30pt, xshift = 265pt]
\foreach \i in {1,2,3}
  \node[fsnode,on chain] (m\i) {};
\end{scope}

\begin{scope}[start chain=going left,node distance=14mm, xshift = 275pt]
\foreach \i in {0, 1, 2}
  \node[fsnode,on chain] (s\i) {};
\end{scope}

\begin{scope}[start chain=going left,node distance=14mm, xshift = 275pt]
\foreach \i in {0}
  \node[fsnode,on chain, label=below:\scriptsize{$n$}] (s\i) {};
\end{scope}

\node [below = of s2, label=above:\scriptsize{$n-2$}, yshift = 14pt] {};

\node [below = of s1, label=above:\scriptsize{$n-1$}, yshift = 14pt] {};

\filldraw (5.75, 0) circle (0.025cm);
\filldraw (5.25, 0) circle (0.025cm);
\filldraw (5.5, 0) circle (0.025cm);

% the edges
\foreach \i in {1,2,3,4}
\draw (f1) -- (g\i);
%\foreach \i in {1,2,3, 4}
%\draw (f2) -- (h\i);
\foreach \i in {1,2,3, 4, 5}
\draw (f3) -- (n\i);

\foreach \i in {1,2}
\draw (s2) -- (p\i);

%\foreach \i in {1,2,3}
%\draw (s1) -- (k\i);

\foreach \i in {1,2,3}
\draw (s0) -- (m\i);

\draw (f1) -- (f2);
\draw (f2) -- (f3) -- (f4);
\draw (s0) -- (s1);
\draw (s1) -- (s2);

%\draw [decorate,decoration={brace,amplitude=5pt,mirror,raise=4ex}]
%  (-.25,0) -- (5.5,0) node[midway,yshift=-3em]{\footnotesize{$G_{n-2}$}};
  
%  \draw [decorate,decoration={brace,amplitude=6pt,raise=4ex}]
%   (-.25,1) -- (6.75,1) node[midway,yshift=+3em]{\footnotesize{$G_{n-1}$}};

 %     \draw [decorate,decoration={brace,amplitude=6pt,raise=4ex}]
 %   (7.75,0.5) -- (9,0.5) node[midway,yshift=+3em]{\footnotesize{$t_{n}+1$}};
    
   % \node[left] at (8.25, 0.5) {\footnotesize{$\bar{a}_1$}};
    %    \node[right] at (8.35, 0.5) {\footnotesize{$\bar{a}_m$}};
  % \node[below] at (7.5, 0) {\footnotesize{$\bar{y}$}};
  
\end{tikzpicture}

\caption{A caterpillar graph with the property that every other vertex has zero legs.}\label{fig:nicewedge}
\end{figure}

\begin{definition}
For $k \geq 1$, let $L_{k}(a_1, \ldots, a_k)$ denote the following sum:
\[
L_{k}(a_1, \ldots, a_k) = 
\sum\limits_{\substack{\ell=1, 2, \ldots, k \\ 1 \leq i_1 < \cdots < i_{\ell} \leq k }} (i_2 - i_1)(i_3 - i_2) \cdots (i_{\ell} - i_{\ell -1}) a_{i_{1}} a_{i_{2}} \cdots a_{i_{\ell}}
\]
\end{definition}

\begin{example}(Examples of $L_{k}(a_1, \ldots, a_k)$).

\begin{itemize}
\item[(1)] First consider when $k =1$. The summand is the single term $a_1$, corresponding to $i_1 = 1$.
%First consider when $k = 1$. Since $\ell =1$, we get $i_1 = 1$ and there is no value for $i_2$. So final sum is $L_{1}(a_1) = a_1$.
\item[(2)] When $k = 2$ the summand has three terms. For $\ell =1$ we have $a_1$ corresponding to $i_1 = 1$ and $a_2$ corresponding to $i_1 = 2$. For $\ell = 2$ we have $a_1a_2$ corresponding to $(i_1, i_2) = (1, 2)$. So the resulting sum is $L_{2}(a_1, a_2) = a_1 + a_2 + (2-1)a_1 a_2 = a_1 + a_2 + a_1 a_2$.

\item[(3)] We provide two more examples:
\begin{itemize}
\item[(i)]When $k=3,~ L_{3}(a_1, a_2, a_3) = a_1 + a_2 + a_3 + a_1 a_2 + 2 a_1 a_3 + a_2 a_3 + a_1 a_2 a_3$ 
\item[(ii)] When $k=4,~ L_{4}(a_1, a_2, a_3, a_4) = a_1 + a_2 + a_3 + a_4 + a_1a_2 + 2a_1a_3 + 3a_1a_4 + a_2a_3 + 2a_2a_4 + a_3a_4 + a_1a_2a_3 + 2a_1a_2a_4 + 2a_1a_3a_4 + a_2a_3a_4 + a_1a_2a_3a_4$.
\end{itemize}
\end{itemize}
\end{example}

%\hnote{I removed the recursion from the Lemma environment and commented out the proof}
%\begin{lemma} 
The polynomials $L_{k}(a_1, \ldots, a_k)$ satisfy the following relation for $k \geq 2$, which is easy to verify: 

%\label{lem:Lrecursion}
\begin{equation}
\label{eqn:Lrecursion}
L_{k}(a_1, \ldots, a_k) = 
a_k L_{k-1}(a_1, \ldots, a_{k-2}, (a_{k-1} + 1))
+ L_{k-1}(a_1, \ldots, a_{k-1}).
\end{equation}

Caterpillar graphs with the property that every other vertex on the central path has zero legs can always be written
$G^a_k := G_{2k-1}(m_1, 0, m_2, 0, \ldots, m_{k-1}, 0, m_{k})$. For if we have a caterpillar on $2k$ vertices of the form
$G_{2k}(m_1, 0, m_2, 0, \ldots, m_{k-1}, 0, m_{k}, 0)$, we could rewrite this as a caterpillar on $2k-1$ vertices:
$G_{2k-1}(m_1, 0, m_2, 0, \ldots, m_{k-1}, 0, m_{k} +1)$. So in a caterpillar graph denoted $G^a_k$, the number of vertices on the central path is always odd. 

With this notation, $m_i$ is the number of legs on the $(2i-1)$-th vertex of the central path, rather than the $i$th vertex of the central path. While this is not consistent with the notation in Theorem~\ref{thm:reallygencat}, it simplifies the statement and proof that follows. 

\begin{theorem}
\label{thm:m0}
Let $G^a_k = G_{2k-1}(m_1, 0, m_2, 0, \ldots, m_{k-1}, 0, m_{k})$ be a caterpillar graph such that vertices $2, 4, \ldots, 2k-2$ on the central path have degree 2, and vertices $1, 3, \ldots, 2k-1$ have $m_i$ legs ($m_i \geq 1$). Then, for $k \in \mathbb{N}$, the homotopy type of $M(G^a_k)$ is given by:  

\[
    M(G^a_k) \simeq
        \bigvee\limits_{L_{k}(a_1, a_2, \ldots, a_k)} S^{k-1}
        \]
where $a_{i} = m_{i} - 1$ for $i = 1, \ldots, k$.
\end{theorem}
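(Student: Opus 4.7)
The plan is to prove Theorem \ref{thm:m0} by induction on $k$, leaning on Lemma \ref{lem:m} (independent $m$-leg) and Lemma \ref{lem:Lrecursion} (the combinatorial recursion for $L_k$). The base cases $k=1$ and $k=2$ follow directly from the listed base cases for caterpillar graphs: $M(G_1(m_1))\simeq \bigvee_{a_1}S^0$ matches $L_1(a_1)=a_1$ copies of $S^0$, and $M(G_3(m_1,0,m_2))\simeq \bigvee_{a_1+a_2+a_1a_2}S^1$ matches $L_2(a_1,a_2)=a_1+a_2+a_1a_2$ copies of $S^1$.

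For the inductive step, I will isolate the rightmost vertex $v_{2k-1}$ on the central path of $G^a_k$, which carries $m_k$ leaves $\bar x_1,\ldots,\bar x_{m_k}$ together with the edge $\bar y$ going to $v_{2k-2}$. These form an independent $m_k$-leg in the sense of Definition \ref{def:mleg}, so Lemma \ref{lem:m} applies with $G_1=\mathrm{del}_{G^a_k}(\bar x_1)$ and $G_2=\mathrm{del}_{G^a_k}(\bar y)$, yielding
\[
M(G^a_k)\ \simeq\ \Bigl[\bigvee_{a_k} S(M(G_1))\Bigr]\ \vee\ S(M(G_2)).
\]
The key identifications to record are: $G_2$ removes $\bar y$ and everything incident to its endpoints, killing the entire trailing $0$-leg vertex $v_{2k-2}$ and leaving (up to isolated vertices, which do not affect the matching complex) exactly $G^a_{k-1}$; while $G_1$ deletes only the edges at $v_{2k-1}$, so $v_{2k-2}$ survives as a leaf hanging off $v_{2k-3}$, turning the last $m_{k-1}$-leg into an $(m_{k-1}+1)$-leg. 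Thus $M(G_1)\simeq M(\widetilde{G}^a_{k-1})$, where $\widetilde{G}^a_{k-1}$ denotes $G^a_{k-1}$ with $m_{k-1}$ replaced by $m_{k-1}+1$.

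Applying the inductive hypothesis to both $G^a_{k-1}$ and $\widetilde{G}^a_{k-1}$ gives wedges of $S^{k-2}$'s whose counts are $L_{k-1}(a_1,\ldots,a_{k-1})$ and $L_{k-1}(a_1,\ldots,a_{k-2},a_{k-1}+1)$ respectively. Using the suspension identities in Remark \ref{rmk:topology}, all suspensions land in dimension $k-1$ and combine to give
\[
M(G^a_k)\ \simeq\ \bigvee_{N} S^{k-1},\qquad N=a_k\, L_{k-1}(a_1,\ldots,a_{k-2},a_{k-1}+1)+L_{k-1}(a_1,\ldots,a_{k-1}).
\]
By Lemma \ref{lem:Lrecursion} this count is exactly $L_k(a_1,\ldots,a_k)$, completing the induction.

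The only real subtlety, and where I expect to spend the most care, is the structural identification of $G_1$ as $\widetilde{G}^a_{k-1}$: one must notice that collapsing the lone pendant vertex $v_{2k-2}$ created by deleting the edges at $v_{2k-1}$ has the effect of \emph{bumping} the last nonzero leg count by one, rather than leaving $G^a_{k-1}$ unchanged. This is precisely the asymmetry between $G_1$ and $G_2$ that makes Lemma \ref{lem:Lrecursion} the right combinatorial match; writing Lemma \ref{lem:Lrecursion} in exactly the shape $L_k=a_k L_{k-1}(\ldots,a_{k-1}+1)+L_{k-1}(\ldots,a_{k-1})$ was engineered to align with this topological splitting, so once the identification is in place the rest of the proof is bookkeeping.
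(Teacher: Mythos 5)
Your proposal is correct and takes essentially the same route as the paper: induction on $k$, the key identification $G_{2k-2}(m_1,0,\ldots,m_{k-1},0)=G_{2k-3}(m_1,0,\ldots,m_{k-1}+1)$ (your ``bumping''), and Lemma~\ref{lem:Lrecursion} to close the recursion. The only cosmetic difference is that you apply Lemma~\ref{lem:m} directly to the last $m_k$-leg, whereas the paper cites the recursion $A_{n+1,d}=t_{n+1}A_{n,d-1}+A_{n-1,d-1}$ of Theorem~\ref{thm:reallygencat}, whose Case 1 is exactly that application of Lemma~\ref{lem:m}.
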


\begin{proof}
We will prove the theorem by induction on $k$. When $k=1$, $M(G_1(m_1))$ is a discrete set of $m_1$ points, so $M(G_1) = \bigvee\limits_{a_1} S^0$ where $a_1 = m_1 - 1$ as desired.

Now assume the claim holds for $1, \ldots, k-1$ and consider $G_{k}^{a} = G_{2k-1}(m_1, 0, m_2, 0, \ldots, m_k)$. Since $m_{k} \geq 1$, by Theorem~\ref{thm:reallygencat}, 
$$A_{2k-1, d} = a_k A_{2k-2, d-1} + A_{2k-3, d-1}$$
where recall that $A_{2k -1, d}$ is the number of spheres of dimension $d$ in $G_{2k-1}(m_1, 0, m_2, 0, \ldots, m_k)$, $A_{2k-2, d-1}$ is the number of spheres of dimension $d-1$ in $G_{2k-2}(m_1, 0, m_2, 0, \ldots, m_{k-1}, 0)$ and
$A_{2k-3, d-1}$ is the number of spheres of dimension $d-1$ in $G_{2k-3}(m_1, 0, m_2, 0, \ldots, m_{k-1})$. Using the induction hypothesis and the fact that 
$$G_{2k-2}(m_1, 0, m_2, 0, \ldots, m_{k-1}, 0) = G_{2k-3}(m_1, 0, m_2, 0, \ldots, m_{k-1} + 1)$$ we see that $A_{2k-2, d-1} = L_{k-1}(a_1, a_2, \ldots, (a_{k-1}+ 1))$ and $A_{2k-3, d-1} = L_{k-1}(a_1, a_2, \ldots, a_{k-1})$ if $d = k-1$ and $A_{2k, d-1} = A_{2k-1, d-1} = 0$ otherwise. So by equation (\ref{eqn:Lrecursion}), $A_{2k-1, d} = L_{k}(a_1, a_2, \ldots, a_k)$ if $d = k-1$ and $0$ otherwise, as desired. 
\end{proof}

%\todo{We should remove this example. We should only keep it if we decide to delete the related table. The reason we had this here was to clarify the table notation.}
\iffalse
\begin{example}
%To determine the homotopy type of $G^a_3 = G_5(m_1, 0, m_2, 0, m_3)$, we can use Theorem~\ref{thm:reallygencat} or Theorem~\ref{thm:m0}. 
Using Theorem~\ref{thm:m0}, we find that
$G^a_3 = G_5(m_1, 0, m_2, 0, m_3)$ is homotopy equivalent to a wedge of 
$$a_1 + a_2 + a_3 + a_1 a_2 + 2 a_1 a_3 + a_2 a_3 + a_1 a_2 a_3$$ 
copies of $S^2$. Making the substitution $a_i = t_{2i-1}$, we obtain the result in the form we would get by using Theorem~\ref{thm:reallygencat} and Table \ref{table:operationex}. 
\end{example}
\jnote{We would need to change the table reference is table proposal is taken.} 
\fi

\subsection{Matching Complexes of Perfect Binary Trees}

We conclude our discussion of trees by presenting a connectivity bound for perfect binary trees. This result requires both Lemma 2.8 (a consequence of the Matching Tree Algorithm) and Lemma 2.13, which was key in our results for caterpillar graphs.

A perfect binary tree is
a rooted tree in which every non-leaf vertex has two children.
The {\em depth} of a node in a binary tree is the number of edges in the path from that node to the root and the {\em height} of a binary tree is the depth of a leaf. A perfect binary tree of height $h$ has $2^h$ leaves.

\begin{example}
\label{rem:SmallPerfectTrees}
Let $T_h$ denote a perfect binary tree of height $h$. $T_1$ is $P_3$, so $M(T_1) = S^0$. $M(T_2) \simeq S^1 \vee S^1 \vee S^1$. Using Lemma \ref{lem:m} and Theorem \ref{thm:gencat}, we find that $M(T_3) \simeq S^4 \bigvee 
\left[\underset{4}{\vee} S^{3} \right]$.
Using homology tools in Sage in conjunction with Theorem \ref{thm:forest}, $M(T_4) \simeq \bigvee\limits_{56} S^{8} \vee \bigvee\limits_{11} S^9$. 

\end{example}

For $h \geq 1$, define
$$
d_h = \sum\limits_{i = 0}^{ \lceil \frac{h}3 \rceil - 1} 2^{h-3i-1} -2.$$% , \ h\ge 1.

\begin{theorem}
\label{prop:perfectbinary}

Let $T_h$ be a perfect binary tree with height $h \geq 1$. 
%and let $L = \lceil \frac{h}3 \rceil - 1$. 
Then $M(T_h)$ is at least $d_h$-connected. 
%$((\sum\limits_{i = 0}^{L} 2^{h-3i-1})-2)$-\text{connected}.

\end{theorem} 
\begin{proof}
Since $d_1 = -1$ and $d_2 = 0$, the claim is clearly true for $h = 1, 2$.
Let $T_h$ be a perfect binary tree of height $h\geq 3$. The line graph of $T_h$ is made up of $h-1$ rows of triangles such that there are $2^{h-k}$ triangles in the 
$k^{th}$ row, where the rows are numbered $1$ to $h-1$ from bottom to top (see Figure~\ref{fig:perfect}). For $k\equiv 1 \bmod 3$, we label the lower left vertex of each triangle in the $k^{th}$ row $v^j_i$,  where $j = \frac{k+2}3$ and $i$ ranges from $1$ to $2^{h-k}$ moving from left to right.
%(see Figure~\ref{fig:perfect2}). 
If $h \equiv 1 \bmod 3$, we also label the top left vertex $v^j_1$ where $j = \frac{h+2}3$. In total, this gives us  $\sum\limits_{i = 0}^{L} 2^{h-3i-1}$ labeled vertices where $L = \lceil \frac{h}3 \rceil - 1$.

Because the distance between any pair of labeled vertices is at least three, they satisfy the conditions of Lemma~\ref{lem:buzzerbeater}. Thus, all critical cells have at least $\sum\limits_{i = 0}^{L} 2^{h-3i-1}$ elements and $M(T_h)$ must be at least $((\sum\limits_{i = 0}^{L} 2^{h-3i-1})-2)$-connected.
\end{proof}

A straightforward calculation gives
$$d_h = \cfrac{2^{h+2} -r_h}{7}, \ \  \textrm{where} \ \ r_h=
\begin{cases}
18, & h\equiv 0 \pmod 3 \\
15, & h \equiv 1 \pmod 3  \\
16, & h \equiv 2 \pmod 3
\end{cases}.
$$

The following proposition shows that the connectivity bound from Proposition~\ref{prop:perfectbinary} is tight.

%% proof that our bound is tight
\begin{theorem} \label{prop:ref} 
For all $h\ge 1$, the matching complex $M(T_h)$ is homotopy equivalent to a wedge of spheres which contains $S^{d_h +1}$. Consequently, the connectivity bound given in Proposition~\ref{prop:perfectbinary} is tight.
\end{theorem}
\begin{proof}
%\sout{We prove the theorem by using induction with step $3$.} 
We know that  the claim  is true for $h = 1, 2, 3$ by Example~\ref{rem:SmallPerfectTrees}.
%\sout{$h \in \{1,2,3\}$ because: 
 %$M(T_1) = S^0$, and $d_1 +1 =0$;
 % $M(T_2) \simeq S^1 \vee S^1 \vee S^1$, and $d_2 +1 = 1$;
  %$M(T_3) \simeq S^4 \bigvee \left[\underset{4}{\vee} S^{3} \right]$, and  $d_3 +1 = 3$.}
%\medskip
 Assume the claim holds for $T_{h-3}$, for some $h\ge 4$. We will prove it for $T_h$.
%\medskip

Let $T'_2$ denote a perfect binary tree of height 2, with one additional edge at the root, as shown in Figure~\ref{fig:smallgraph}. %Denote this graph by $T'_2$.
%\sout{Tree $T_h$ contains exactly $2^{n-2}$ copies of $T'_2$ at its ``bottom'' (subgraphs $T'_2$ in $T_h$ which contain 4 leaf nodes each).}
Observe that we can partition the leaves of $T_h$ so that they are contained in exactly $2^{n-2}$ copies of $T'_2$.

 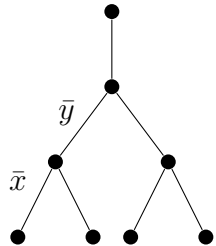
\begin{figure}[!htb]
\centering
\begin{tikzpicture}[level distance=10mm]
\node at (-.6, -1.35) {$\bar{y}$};
\node at (-1.25, -2.25) {$\bar{x}$};
\tikzstyle{every node}=[fill=black,circle,inner sep=2pt]
\tikzstyle{level 1}=[sibling distance=40mm,
set style={{every node}+=[fill=black]}]
\tikzstyle{level 2}=[sibling distance=15mm,
set style={{every node}+=[fill=black]}]
\tikzstyle{level 3}=[sibling distance=10mm,
set style={{every node}+=[fill=black]}]
\tikzstyle{level 4}=[sibling distance=4mm,
set style={{every node}+=[fill=black]}]
\node {}
child {node {}
child {node {}
child {node {}
}
child {node {}
}}
child {node {}
child {node {}
}
child {node {}
}}
};

\end{tikzpicture}
\caption{The subgraph $T_2'$.}
\label{fig:smallgraph}
\end{figure}

Let $\overline{x}$ and $\overline{y}$ be the edges shown in Figure~\ref{fig:smallgraph}. By Lemma~\ref{lem:m},

\begin{equation}\label{eq1}
M(T_h) \simeq S(M(\text{del}_{T_h}(\overline{x}))) \vee S(M(\text{del}_{T_h}(\overline{y}))).
\end{equation}

The subgraph $\text{del}_{T_h}(\overline{y})$ is the disjoint union of the subgraph $T_h \setminus T'_2$ (i.e. the subgraph of $T_h$ obtained by removing the edges of $T'_2$) % from tree $T_h$),
 and 
 %one disjoint component 
the path $P_3$. 
 So by Remark~\ref{rem:disjoint},
 %lemma about matching complex of a  disjoint union, we have

\begin{equation}\label{eq2}
M(\text{del}_{T_h}(\overline{y})) \approx M(T_h \setminus T'_2) \ast M(P_3) \approx M(T_h \setminus T'_2) \ast S^0 \approx S(M(T_h \setminus T'_2)).
\end{equation}

From (\ref{eq1}) and (\ref{eq2}) we conclude that $M(T_h)$ has a a wedge-summand homotopy equivalent the double suspension  $S^2(M(T_h \setminus T'_2)).$ We repeat this procedure for each of the $2^{h-2}$ disjoint copies of the subgraph $T'_2$ in $T_h$, and track the same wedge-summand. After removing all copies of $T'_2$ from $T_h$, we obtain the tree $T_{h-3}$. Since suspension and wedge commute under homotopy eqivalence, we know that 
$M(T_h)$ contains a wedge summand homotopy equivalent to $2^{h-1}$-tuple suspension of $M(T_{h-3})$. 
%\medskip

By the inductive hypothesis, $M(T_{h-3})$ has a wedge summand homotopy equivalent to the sphere $S^{d_{h-3} +1}$, so $M(T_{h})$ has a wedge summand homotopy equivalent to the sphere of dimension:
$$ 2^{h-1} + d_{h-3} +1 =  2^{h-1} + \cfrac{2^{h-1} -r_{h-3}}{7} +1 = \cfrac{2^{h+2} -r_{h-3}}{7} +1 = d_h +1.$$
 In the last relation, we used $r_h= r_{h-3}$.
% (we use obvious equation $r_h= r_{h-3}$ in the last relation). 
 This concludes the proof. 
\end{proof}
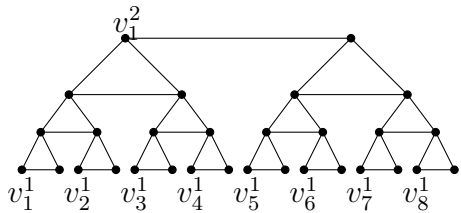
\begin{figure}
\iffalse
\begin{center}
\begin{tikzpicture}[level distance=10mm]
\tikzstyle{every node}=[fill=black,circle,inner sep=2pt]
\tikzstyle{level 1}=[sibling distance=40mm,
set style={{every node}+=[fill=black]}]
\tikzstyle{level 2}=[sibling distance=20mm,
set style={{every node}+=[fill=black]}]
\tikzstyle{level 3}=[sibling distance=10mm,
set style={{every node}+=[fill=black]}]
\tikzstyle{level 4}=[sibling distance=4mm,
set style={{every node}+=[fill=black]}]
\node {}
child {node {}
child {node {}
child {node {}
child {node {}}
child {node {}}
}
child {node {}
child {node {}}
child {node {}}}
}
child {node {}
child {node {}
child {node {}}
child {node {}}
}
child {node {}
child {node {}}
child {node {}}}
}
}
child {node {}
child {node {}
child {node {}
child {node {}}
child {node {}}
}
child {node {}
child {node {}}
child {node {}}
}}
child {node {}
child {node {}
child {node {}}
child {node {}}
}
child {node {}
child {node {}}
child {node {}}
}}
};
\end{tikzpicture}
\end{center}
\vspace{0.5 cm}
\fi
%%%%%%%%%%%%%%%%%%%%%%%%%%%%%%%%%%%%%%%%%
\begin{center}
\begin{tikzpicture}[scale = .5]

\foreach \i in {0, 1, 2, 3, 4, 5, 6, 7}{
\filldraw (-3 + 1.5*\i, 0) circle (0.1cm);
\filldraw (-2 + 1.5*\i, 0) circle (0.1cm);
\filldraw (-2.5 + 1.5*\i, 1) circle (0.1cm);

\draw (-3 + 1.5*\i, 0) -- (-2 + 1.5*\i, 0) -- (-2.5 +1.5*\i, 1) -- (-3 + 1.5*\i, 0);}

\foreach \i in {1, 2, 3, 4, 5, 6, 7,8}{
\node[below] at (-3 + 1.5*\i - 1.5, .1) {$v^1_{\i}$};
\node[below] at (-2 + 1.5*\i - 1.5 , .1) {};
\node[above] at (-2.5 + 1.5*\i - 1.5, 1) {};
}

\foreach \i in {0, 2, 4, 6}{
\draw (-2.5 +1.5*\i, 1) -- (-2.5 + 1.5 +1.5*\i, 1);}

\foreach \i in {0, 1, 2, 3}{
\filldraw (-1.75 + 3*\i, 2) circle (0.1cm);}

\foreach \i in {0, 1, 2, 3}{
\draw (-2.5 +1.5*2*\i, 1) --(-1.75 + 3*\i, 2);
}

\foreach \i in {0, 1, 2, 3}{
\draw (-2.5 +1.5+ 1.5*2*\i, 1) --(-1.75 + 3*\i, 2);
}

\draw (-1.75 + 0, 2) --(-1.75 + 3, 2) ;
\draw (-1.75 + 6, 2) --(-1.75 + 9, 2) ;

\filldraw (-.25, 3.5) circle (0.1cm);
\filldraw (-.25 + 6, 3.5) circle (0.1cm);

\draw (-.25, 3.5)--(-.25 + 6, 3.5);
\draw (-1.75, 2) --(-.25, 3.5) -- (-1.75+3, 2) ;

\draw (-1.75 + 6, 2) --(-.25 + 6, 3.5) --(-1.75 + 9, 2)  ;

\node at (-.2, 3.9) {$v^2_1$};

\end{tikzpicture}
\end{center}
\caption{The line graph of a perfect binary tree of height 4 $L(T_4)$.}
\label{fig:perfect}

\end{figure}

\section{Future Directions}
\label{sec:future}

In Section~\ref{polygon}, we explored the matching complexes of honeycomb graphs
and partially answered a question posed by Jonsson~\cite{Jakob} by presenting a weak lower bound for connectivity of an $r \times s \times t$ honeycomb graph and sharper bounds in the cases when $r = s= 1$ and $r =2$, $s =1$. The work to understand these complexes is far from done. 

We saw in Remark~\ref{2by1bym} that while a natural conjecture may be that the connectivity of the matching complex is at least one less than the number of hexagons, this does not turn out to be true. This raises the question: 
\begin{question}
What is the connectivity of $r \times s \times t$ honeycomb graphs, where $r \geq 3, s \geq 1,$ and $t \geq 1$?
\end{question}

In the introduction we also discussed Kekul\'e structures, which are perfect matchings of a honeycomb graph that have been studied in chemistry~\cite{klein_etal,hite_etal}. Perfect matchings of a honeycomb graph are the maximal dimensional faces of the matching complex, so we can consider the subcomplex induced by these faces, leading us to ask:

\begin{question}
What is the relationship between matching complexes of honeycomb graphs and Kekul\'e structures? 
\end{question}

In Section~\ref{sec:trees}, we turned our attention to trees and 
saw that the homotopy type of forests is either a point or a wedge of spheres. 
Consequently, acyclic graphs do not contain torsion.
From work by Shareshian and Wachs~\cite{Shareshian_Wachs} and Jonnson~\cite{Jakob}, we know that torsion appears in higher homology groups of the full matching complex $M(K_n)$ and the chessboard complex $M(K_{m,n})$. It would be interesting to
determine if there is torsion
in the higher homology groups of honeycomb graphs.
\begin{question}
Is there torsion in the matching complexes of $r \times s \times t$ honeycomb graphs?
\end{question}

%\todo{Try to get down to one table.}

\section*{Acknowledgements}

This work was partially completed during the 2019 Graduate Research Workshop in Combinatorics.  The workshop was partially funded by NSF  grants  1603823, 1604773 and  1604458,  ``Collaborative  Research:  Rocky  Mountain  -  Great Plains Graduate Research Workshops in Combinatorics,'' NSA grant H98230-18-1-0017, ``The 2018 and 2019 Rocky Mountain - Great Plains Graduate Research Workshops in Combinatorics,'' Simons Foundation Collaboration Grants \#316262 and \#426971 and grants from the Combinatorics Foundation and the Institute for Mathematics and its Applications. Additional funding was provided by Grant \#174034 of the Ministry of Education, Science and Technological Development of Serbia.

 We would like to thank Margaret Bayer and Bennet Goeckner for their insights and guidance on this project. Additionally, we would like to thank Benjamin Braun, Russ Woodroofe, Mario Marietti, and Damiano Testa for their helpful comments on this manuscript. Finally, we would like to thank the anonymous referee whose comments sketched a proof Theorem~\ref{prop:ref}.

%%%%%%%%%%%%Creating Appedix Here%%%%%

\nocite{Michelle}
\nocite{Ben}

\newpage
\bibliographystyle{plain}
\bibliography{mc}

\addresseshere

\appendix

\newpage
\section{Computations for the homotopy type of trees}\label{sec:appendix}
\begin{minipage}{\linewidth}
\centering\begin{sideways}
\scalebox{1}{

\renewcommand{\arraystretch}{2}

\begin{tabular}{|p{3cm}|l|l|p{2cm}|p{3cm}|p{3.5cm}|p{2cm}|p{2cm}|c}
\hline
                                                                     \backslashbox{Tree}{Dim}               & 0     & 1           & 2                          & 3                                                     & 4                                                                                                                      & 5                                                                                                           & 6                       & 7 \\ \hline
$G_1(m_1)$                                                                          & $t_1$ &             &                            &                                                       &                                                                                                                        &                                                                                                             &                         &   \\ \hline
$G_2(m_1,m_2)$                                                                      & 1     & $t_1t_2$    &                            &                                                       &                                                                                                                        &                                                                                                             &                         &   \\ \hline
$G_3(m_1,m_2,m_3)$                                                                  & 0     & $t_1 + t_3$ & $t_1t_2t_3$                &                                                       &                                                                                                                        &                                                                                                             &                         &   \\ \hline
$G_4(m_1,\ldots,m_4)$                                                              & 0     & 1           & $t_1t_2 + t_1t_4 + t_3t_4$ & $t_1t_2t_3t_4$                                        &                                                                                                                        &                                                                                                             &                         &   \\ \hline
$G_5(m_1,\ldots, m_5)$                                                         & 0     & 0           & $t_1 + t_3 + t_5$          & $t_1t_2t_3 + t_1t_2t_5 + t_1t_4t_5 + t_3t_4t_5$       & $t_1t_2t_3t_4t_5$                                                                                                      &                                                                                                             &                         &   \\ \hline
$G_6( m_1, \ldots, m_6)$        & 0     & 0           & 1                          & $t_1t_2 + t_1t_4 + t_3t_4 + t_1t_6 + t_3t_6 + t_5t_6$ & $t_1t_2t_3t_4 + t_1t_2t_3t_6 + t_1 t_2t_5t_6 + t_1t_4t_5t_6 + t_3t_4t_5t_6$                                            & $t_1t_2t_3t_4t_5t_6$                                                                                        &                         &   \\ \hline
$G_7(m_1, \ldots, m_7)$ & 0     & 0           & 0                          & $t_1 + t_3 + t_5 + t_7$                               & $t_1t_2t_3 + t_1t_2t_5 + t_1t_4t_5 + t_3t_4t_5+ t_1t_2t_7 + t_1t_4t_7 + t_3t_4t_7 + t_1t_6t_7 + t_3t_6t_7 + t_5t_6t_7$ & $t_1t_2t_3t_4t_5 + t_1t_2t_3t_4t_7 + t_1t_2t_3t_6t_7 + t_1t_2t_5t_6t_7 + t_1t_4t_5t_6t_7 + t_3t_4t_5t_6t_7$ & $t_1t_2t_3t_4t_5t_6t_7$ &   \\ \hline
                                                                                    &       &             &                            &                                                       &                                                                                                                        &                                                                                                             &                         &  
\end{tabular}}
\end{sideways}
\captionof{table}{The number of spheres of dimension $d$ in the homotopy type of $M(G_{n}(m_1, \ldots, m_n))$, where $G_{n}(m_1, \ldots, m_n)$ is a caterpillar with $m_i=t_i+1$ legs at each vertex $i$ of the central path.}
\label{table:gencat}
%\end{table}
\end{minipage}

\iffalse
\renewcommand{\arraystretch}{2}

\begin{minipage}{\linewidth}
\centering\begin{sideways}
\scalebox{1}{

\begin{tabular}{|p{5cm} |p{1cm} | l |p{4cm} |p{4.5cm} | }
\hline
  \backslashbox{Tree}{Dim}  & 0& 1 & 2  & 3      \\ \hline
$G_1(m_1)$  & $t_1$ & 0 & 0  &  0     \\ \hline
$G_2(m_1, 0 )$  & $1+t_1$  &  0   &  0 & 0      \\ \hline
$G_3(m_1,0, m_3 )$  & 0 & $t_1 + t_3 + t_1t_3$  & 0  &   0    \\ \hline
$G_4(m_1, 0, m_3, 0)$  & 0 & $1 + 2t_1 + t_3 + t_1t_3$  & 0 & 0      \\ \hline
$G_5(m_1, 0, m_3, 0, 0)$  & 0 & $1 + t_1$ & 0 & 0   \\ \hline
$G_6(m_1, 0, m_3,0, 0, m_6)$  & 0 & 0 & $1 + 2 t_1 + t_3 + t_1 t_3 + t_6 + t_1t_6$  & 0      \\ \hline
$G_7(m_1, 0, m_3, 0, 0,  m_6, m_7)$  & 0 & 0 & $1 + t_1$ & $t_7 + 2 t_1t_7 + t_3t_7 + t_1 t_3 t_7 + t_6t_7 + t_1t_6t_7$      \\ \hline
\end{tabular}}
\end{sideways}
\captionof{table}{Examples of the number of spheres in each dimension $d$ of the homotopy type of a general caterpillar graph.}
\label{table:operationex}
\end{minipage}
\fi

\end{document}